\theoremstyle{plain}
    \newtheorem{thm}{Theorem}[section]
    \newtheorem{claim}[thm]{Claim}
     \newtheorem{conjecture}[thm]{Conjecture}
    \newtheorem{corollary}[thm]{Corollary}
    \newtheorem{lemma}[thm]{Lemma}
    \newtheorem{proposition}[thm]{Proposition}
    \newtheorem{theorem}[thm]{Theorem}
\theoremstyle{definition}
    \newtheorem*{notation*}{Notation and Terminology}
    \newtheorem{remark}[thm]{Remark}
\theoremstyle{remark}
    \newtheorem{setup}[thm]{}
\newcommand{\C}{\mathbb{C}}
\newcommand{\F}{\mathbb{F}}
\newcommand{\PP}{\mathbb{P}}
\newcommand{\BPP}{\mathbb{P}}
\newcommand{\Q}{\mathbb{Q}}
\newcommand{\R}{\mathbb{R}}
\newcommand{\Z}{\mathbb{Z}}
\newcommand{\SE}{\mathcal{E}}
\newcommand{\SSL}{\mathcal{L}}
\newcommand{\OO}{\mathcal{O}}
\newcommand{\SO}{\mathcal{O}}
\newcommand{\alb}{\operatorname{alb}}
\newcommand{\Aut}{\operatorname{Aut}}
\newcommand{\Bs}{\operatorname{Bs}}
\newcommand{\ch}{\operatorname{char }}
\newcommand{\Gal}{\operatorname{Gal}}
\newcommand{\id}{\operatorname{id}}
\newcommand{\Ker}{\operatorname{Ker}}
\newcommand{\MRC}{\operatorname{MRC}}
\newcommand{\NS}{\operatorname{NS}}
\newcommand{\Proj}{\operatorname{Proj}}
\newcommand{\rank}{\operatorname{rank}}
\newcommand{\Sing}{\operatorname{Sing}}
\newcommand{\torsion}{\operatorname{torsion}}
\newcommand{\variety}{\operatorname{variety}}
\newcommand{\Alb}{\operatorname{Alb}}
\newcommand{\Pic}{\operatorname{Pic}}
\begin{document}
\title[Wild automorphisms of projective varieties]
{Wild automorphisms of projective varieties, the maps which have no invariant proper subsets}

\author{Keiji Oguiso}
\address{Department of Mathematical Sciences, the University of Tokyo, Meguro Komaba 3-8-1, Tokyo, Japan, and National Center for Theoretical Sciences, Mathematics Division, National Taiwan University,
Taipei, Taiwan}
\email{oguiso@g.ecc.u-tokyo.ac.jp}

\author{De-Qi Zhang}
\address{Department of Mathematics, National University of Singapore, 10 Lower Kent Ridge Road, Singapore 119076, Republic of Singapore}
\email{matzdq@nus.edu.sg}

%\date{\today}

\dedicatory{}

\begin{abstract}
Let $X$ be a projective variety and $\sigma$ a wild automorphism on $X$, i.e.,
whenever $\sigma(Z) = Z$ for a non-empty Zariski-closed subset $Z$ of $X$, we have $Z = X$.
Then $X$ is conjectured to be an abelian variety with $\sigma$ of zero entropy (and proved to be so when $\dim X \le 2$) by
Z.~Reichstein, D.~Rogalski and J.~J.~Zhang in their study of projectively simple rings.
This conjecture has been generally open
for more than a decade.
In this note, we confirm this original conjecture when $\dim X \le 3$ and $X$ is not a Calabi-Yau threefold,
and also show that $\sigma$ is of zero entropy when $\dim X \le 4$ and the Kodaira dimension $\kappa(X) \ge 0$.
\end{abstract}

\subjclass[2010]{
14J50, %Automorphisms of surfaces and higher-dimensional varieties
32M05, %Complex Lie groups, automorphism groups acting on complex spaces
11G10. %Abelian varieties of dimension >1
%32H50, %Iteration problems,
%37B40. %Topological entropy
}
\keywords{Wild automorphism, Abelian variety, Fixed point, Entropy}

\thanks{The first named author is partially supported by JSPS and NCTS and the second named author is partially supported by an ARF of NUS}

\maketitle

\tableofcontents

\section{Introduction}
We work over the complex number field
$\C$, unless stated otherwise  (cf.~Proposition \ref{BaseField}).
Let $X$ be a projective variety.
An automorphism $\sigma$ in $\Aut(X)$
is called {\it wild} in the sense of \cite{RRZ} if:
whenever $\sigma(Z) = Z$ for a non-empty
Zariski-closed subset $Z$ of $X$, we have $Z = X$, or equivalently, for every $x \in X$, the orbit
$\{\sigma^s(x) \, | \, s \ge 0\}$ is Zariski dense in $X$.
This always holds when $\dim X = 0$.

In this note, we will prove Theorems \ref{ThA} and \ref{ThB} below which support the following conjecture of
Z.~Reichstein, D.~Rogalski and J.~J.~Zhang (\cite[Conjecture 0.3]{RRZ}).

\begin{conjecture}\label{stronger}
Assume that a projective variety $X$ admits a wild automorphism. Then $X$ is isomorphic to an abelian variety.
\end{conjecture}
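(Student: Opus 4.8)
The plan is to stratify by the Kodaira dimension $\kappa(X)$ and to exploit the single mechanism underlying all of wildness: every \emph{functorial} construction attached to $X$ is automatically $\sigma$-equivariant, so any proper closed subset it produces — a singular locus, an indeterminacy locus, a discriminant, a special orbit on a base — is a $\sigma$-invariant proper closed subset, hence a contradiction. First I would record the elementary reductions. The singular locus $\Sing(X)$ is $\sigma$-invariant and proper, so $X$ is smooth; and if $\sigma^n(x)=x$ for some $n\ge 1$, the finite orbit of $x$ is $\sigma$-invariant and proper, so $\sigma$ and all its powers act freely, without periodic points. The main quantitative inputs are the topological and the \emph{holomorphic} Lefschetz fixed point formulas applied to every $\sigma^n$: since the action is free, the holomorphic Lefschetz number $\sum_q(-1)^q\Tr(\sigma^{n\ast}\mid H^q(X,\OO_X))$ vanishes for all $n\ge 1$. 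When $H^q(X,\OO_X)=0$ for all $q>0$ this number equals $1$, an immediate contradiction, which already eliminates rational, Fano, Enriques and rationally connected $X$; and a short refinement, using that $\sigma^\ast$ scales the unique holomorphic $2$-form while $H^2(X,\OO_X)\cong\C$, shows that \emph{no $K3$ surface admits a wild automorphism} (freeness of $\sigma$ and of $\sigma^2$ impose incompatible eigenvalue conditions).

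Next I would dispose of positive Kodaira dimension by induction on $\dim X$, the cases $\dim X\le 1$ being immediate, with only elliptic curves surviving. If $\kappa(X)=\dim X$, then $\Aut(X)$ is finite, every point is periodic, and wildness fails. If $0<\kappa(X)<\dim X$, the Iitaka fibration $X\ratmap Y$ is $\sigma$-equivariant with $0<\dim Y<\dim X$; its indeterminacy locus, being $\sigma$-invariant and proper, must be empty after an equivariant resolution, so we work with a morphism $f$ inducing $\tau\in\Aut(Y)$. If $\tau$ is \emph{not} wild the preimage of a $\tau$-invariant proper closed subset contradicts wildness, and if $Y$ has a $\tau$-periodic point (automatic when $Y$ is of general type, or a curve of genus $\ge 2$, or $\PP^1$, where every automorphism has a fixed point) its fibre orbit is again invariant and proper. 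The residual situation is $\tau$ wild, so by induction $Y$ is abelian and $\tau$ a translation; this I would then feed, via the $\kappa=0$ analysis below, into the fibre structure (whose general fibre has $\kappa=0$), rather than claim a contradiction outright.

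There remains $\kappa(X)\le 0$. For uniruled $X$ the same argument applied to the $\MRC$ fibration $X\ratmap Z$ leaves three cases: $Z$ a point (then $X$ is rationally connected, excluded above); $\dim Z=1$ with $Z$ of genus $\ge 1$; and $\dim Z=2$ with $Z$ a non-uniruled surface on which $\tau$ is wild, hence an abelian surface by the surface case. Here the fibres are rational, and I would extract an invariant proper subset either from the discriminant of the fibration (a $\tau$-invariant proper divisor whenever singular fibres occur) or, in the smooth case, by spreading over the base the fixed point that every automorphism of a $\PP^1$-fibre or rational-surface fibre possesses, after replacing $\sigma$ by a power to split the bundle. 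For $\kappa(X)=0$ the dimension hypothesis enters decisively: by the Beauville--Bogomolov decomposition for threefolds with numerically trivial canonical class, a finite étale cover of $X$ is a product of an abelian variety with $K3$ and strict Calabi--Yau factors. Pushing $\sigma$ to such a cover and replacing it by a power that preserves the splitting, the decisive non-abelian possibility is $X\sim E\times S$ with $E$ elliptic and $S$ a $K3$ surface; the projection to $S$ is canonical — it is the null-foliation of the unique holomorphic $2$-form — hence $\sigma$-equivariant, and the induced automorphism of $S$ cannot be wild by the $K3$ computation above, so any of its periodic points pulls back to an invariant proper closed subset. Eliminating the $K3$-type factors leaves only abelian varieties, which is the desired conclusion and which genuinely do carry wild automorphisms (translation by a non-torsion point).

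The honest obstacle — and the precise reason the theorem excludes Calabi--Yau threefolds — is the strict Calabi--Yau case: such an $X$ has $K_X\sim 0$ and $\chi(\OO_X)=0$, so every Lefschetz invariant of every $\sigma^n$ vanishes identically; and when $\Pic(X)$ has rank one there is no nonconstant morphism to a lower-dimensional variety and no canonical fibration on which to run the inductive machine. None of the tools above then applies, and ruling out a wild automorphism would require genuinely new input — a dynamical or integral Hodge-theoretic constraint forcing a periodic subvariety. I expect this to be the single hardest point, and it is exactly the one the paper sets aside by hypothesis.
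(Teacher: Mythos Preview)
First, the statement you are attempting is a \emph{conjecture} that the paper does not prove in full; the paper establishes only the partial result (Theorem~\ref{ThA}) that a projective threefold with a wild automorphism is abelian or Calabi--Yau. Your final paragraph shows you understand this, so your proposal is really an outline for Theorem~\ref{ThA}. Two of your reductions are more laboured than needed: since $\sigma^*K_X\cong K_X$, the action of $\sigma$ on $|mK_X|\cong\PP^N$ has a fixed point whenever $N\ge 1$, which already forces $\kappa(X)\le 0$ in one line (Proposition~\ref{PropA}(2)); and once $\chi(\OO_X)=0$, the Beauville--Bogomolov factors are abelian or odd-dimensional strict Calabi--Yau (Proposition~\ref{PropA}(3)), so no separate K3 argument is required.

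The genuine gap is the uniruled case, which is where essentially all of the paper's work lies. Your proposed mechanism --- ``spreading over the base the fixed point that every automorphism of a $\PP^1$-fibre or rational-surface fibre possesses, after replacing $\sigma$ by a power to split the bundle'' --- cannot work: the induced action on the base $Y$ is itself wild, hence a translation of \emph{infinite order} on the abelian variety $Y$, so \emph{no} power of $\sigma$ stabilises any fibre and there is no fibrewise automorphism whose fixed locus you could spread. Your discriminant alternative also evaporates, since the fibration is automatically smooth (Lemma~\ref{fin}(3)). The paper's actual arguments are substantially deeper. For $q(X)=2$ (fibres $\PP^1$ over an abelian surface) it passes to an intermediate elliptic-curve quotient $X\to E$ and analyses the anti-canonical system of the resulting elliptic ruled surface fibres, extracting a $\sigma$-stable base locus or an equivariant map to a ruled surface (Proposition~\ref{EllipticRuled}, Lemma~\ref{EllipticRuled2}, Proposition~\ref{Prop_q2}). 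For $q(X)=1$ (rational-surface fibres over an elliptic curve) it runs a relative MMP, invokes Lesieutre's constraints on positive-entropy smooth threefold automorphisms to force $d_1(\sigma)=1$, and then carries out a N\'eron--Severi/Zariski-decomposition analysis on the generic fibre to manufacture a $\sigma$-stable divisor (Lemma~\ref{Og_lem}, Proposition~\ref{Prop_q1}). None of this is visible in your outline, and without it the uniruled case --- the heart of the theorem --- remains entirely open.
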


Once $X$ is known to be an abelian variety (as asserted in Conjecture \ref{stronger}), the wild automorphisms on $X$
can been completely classified as in \cite[Theorem 0.2]{RRZ};
{\it in particular, they have zero entropy.}
We recall that for a compact K\"ahler manifold (or smooth projective variety) $X$ of dimension $n \ge 1$, and $f \in \Aut(X)$, we let $d_i(f)$ be the $i$-th {\it dynamical degree} of $f$, that is, the spectral radius of $f^*|_{H^{i,i}(X, \R)}$.
The well-known log concavity of dynamical degrees due to Khovanskii and Teissier asserts that
$$(*) \,\,\,\, d_{i-1}(f) d_{i+1}(f) \le d_i(f)^2$$
for all $1 \le i \le n -1$. Hence $d_i(f) = 1$ for one $i$ with $1 \le i \le n-1$ implies that it holds for all such $i$. The classical results of Gromov and Yomdin imply that the {\it topological entropy}
$$h(f) = \log \max_{1 \le i \le n} \{d_i(f)\}.$$
Hence $f$ has zero entropy if and only if $d_1(f) = 1$.

When $\dim X \le 2$, Theorem \ref{ThA} is proved in \cite[Theorem 6.5]{RRZ} (or Theorem \ref{RRZth} for a slightly simplified proof there). See \cite{Ki}, and also our Section \ref{sect_CY} for results when $X$ has Kodaira dimension zero.
Theorem \ref{ThA} follows from Propositions \ref{PropB}, \ref{Prop_q2}, \ref{Prop_q1} and \ref{PropC}.

\begin{theorem}\label{ThA}
Let $X$ be a projective variety over $\C$ of dimension $\le 3$.
Assume that $X$ admits a wild automorphism $\sigma$.
Then either $X$ is an abelian variety, or
$X$ is a Calabi Yau manifold of dimension three
and $\sigma$ has zero entropy.
\end{theorem}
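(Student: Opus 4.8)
The plan is to organize the proof according to the irregularity $q(X)=\dim\Alb(X)$, after a series of reductions that exploit the defining property of wildness as strongly as possible. First I would record the basic constraints. Since $\Sing(X)$, the union of the $\sigma$-orbits of any periodic subvariety, and the non-general-type locus are all $\sigma$-invariant Zariski-closed proper subsets whenever nonempty, wildness forces: $X$ is smooth; $\sigma$ admits no proper periodic subvariety, so every iterate $\sigma^n$ $(n\ge1)$ is fixed-point free; and $X$ is not of general type (otherwise $\Aut(X)$ is finite, $\sigma$ has finite order, and its periodic points form an invariant proper subset). Fixed-point freeness of all iterates is the crucial input: by the Atiyah--Bott and topological Lefschetz fixed point formulas, the holomorphic Lefschetz numbers $\sum_i(-1)^i\operatorname{tr}((\sigma^n)^*\mid H^i(X,\OO_X))$ and the topological Lefschetz numbers $\sum_i(-1)^i\operatorname{tr}((\sigma^n)^*\mid H^i(X,\C))$ all vanish, for every $n\ge1$. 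These vanishings, together with the fact that $\sigma^*$ acts unitarily on $H^0(X,\Omega^3_X)$ (hence with eigenvalue of modulus one on the top Hodge piece), will be the engine of the argument.

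Next I would establish the key structural lemma that the Albanese morphism $\alb\colon X\to A:=\Alb(X)$ is a smooth surjective fibration onto which $\sigma$ descends as a wild automorphism $\bar\sigma$ of $A$. Indeed $\alb$ is $\sigma$-equivariant, so its critical locus (where $d\,\alb$ drops rank) is $\sigma$-invariant; as $\alb(X)$ generates $A$ and carries the wild automorphism induced on its normalization, a dimension count against the $\dim\le2$ case (Theorem \ref{RRZth}) rules out $\alb(X)\subsetneq A$. Thus $\alb$ is surjective, the critical locus is proper, hence empty, and $\alb$ is a smooth proper morphism, i.e.\ an isotrivial fiber bundle. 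Moreover any $\bar\sigma$-invariant proper closed $W\subset A$ has $\alb^{-1}(W)$ invariant and proper in $X$, so $\bar\sigma$ is wild on $A$. This reduces everything to the value of $q$.

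Then comes the case analysis. For $q=3$ the fibers are $0$-dimensional, so $\alb$ is finite étale onto an abelian threefold and $X$ is itself abelian (Proposition \ref{PropB}). For $q=0$ there is no Albanese to exploit, so I would instead run the $\sigma$-equivariant, canonically defined Iitaka and MRC fibrations: their indeterminacy loci are invariant and proper, hence empty, so they are morphisms onto lower-dimensional bases carrying wild automorphisms; by Theorem \ref{RRZth} a positive-dimensional such base would be abelian and would force $q(X)>0$, a contradiction. This excludes $0<\kappa(X)<3$ and excludes the rationally connected case; the remaining uniruled case is killed because rational connectivity gives $H^i(X,\OO_X)=0$ $(i>0)$ and hence a nonzero holomorphic Lefschetz number, contradicting fixed-point freeness. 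What survives is $\kappa(X)=0$, where the absence of $\sigma$-invariant $K_X$-negative rational curves makes $X$ minimal with $K_X$ torsion, and with $q=0$ this is exactly a Calabi--Yau threefold (Proposition \ref{PropC}). The cases $q=1$ and $q=2$ (Propositions \ref{Prop_q1}, \ref{Prop_q2}) use the fiber-bundle structure over the abelian base $A$: the fiber is smooth and not of general type; ruled fibers are excluded because the fiberwise fixed locus of $\sigma$ would be an invariant proper subset; fibers of higher genus or general type are excluded by isotriviality together with the finiteness of their automorphism groups; and abelian fibers realize $X$ as an extension of an abelian variety by an abelian variety, hence abelian.

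The main obstacle is the Calabi--Yau case, where I must still prove that $\sigma$ has zero entropy. Here the holomorphic Lefschetz vanishing forces $\sigma^*$ to act trivially on $H^0(X,\Omega^3_X)$, and since $h^{1,0}=h^{2,0}=0$ the topological Lefschetz identity reduces to $2+\operatorname{tr}((\sigma^n)^*\mid H^2)+\operatorname{tr}((\sigma^n)^*\mid H^4)=\operatorname{tr}((\sigma^n)^*\mid H^3)$ for all $n$. If the entropy were positive, the dynamical degree $\lambda:=\rho(\sigma^*\mid H^{1,1})>1$ would appear as a real, simple leading eigenvalue on both $H^2=H^{1,1}$ and $H^4$, so the left-hand side grows like $2\lambda^n$. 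The crux is to show that $\operatorname{tr}((\sigma^n)^*\mid H^3)$ grows strictly slower, so that the identity cannot hold: this is where I expect the real work, bounding the spectral radius on $H^{2,1}$ via dynamical-degree estimates (log-concavity and the mixed bound $\rho(\sigma^*\mid H^{2,1})\le\sqrt{d_1 d_2}$) and using the indefinite Hodge--Riemann pairing on $H^3$ (preserved by $\sigma^*$, with eigenvalues in pairs $\mu,1/\bar\mu$) to rule out a real dominant eigenvalue whose contribution could cancel the $2\lambda^n$ term. Establishing this strict domination—equivalently, that the wildness-compatible Calabi--Yau case must have zero entropy—is the heart of the matter, and is precisely the point beyond which the full conjecture (that this case does not occur at all) remains open.
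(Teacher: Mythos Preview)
Your overall architecture---reduce via the Albanese/MRC picture and then split by $q(X)$---matches the paper, and your $q=0$ and $q=3$ cases are essentially fine. The serious gap is in $q=1$ and $q=2$.

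Your claim that ``ruled fibers are excluded because the fiberwise fixed locus of $\sigma$ would be an invariant proper subset'' is not valid: $\sigma$ does \emph{not} act on individual fibers. It carries $X_y$ isomorphically onto $X_{\bar\sigma(y)}$ with $\bar\sigma(y)\ne y$ (since $\bar\sigma$ is wild, hence fixed-point free), so there is no fiberwise fixed locus to extract. Relatedly, ``smooth proper $\Rightarrow$ isotrivial fiber bundle'' is false in the algebraic category; Ehresmann gives only $C^\infty$ local triviality. Concretely, for $q=2$ the map $X\to Y$ is a $\PP^1$-bundle over an abelian surface, and nothing about the rational fiber alone hands you a $\sigma$-stable section or divisor. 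The paper's actual arguments here are substantially harder. For $q=2$ (Proposition~\ref{Prop_q2}) one writes $\sigma|_Y=T_b\circ\alpha$ with $\alpha-\id$ nilpotent; if $\alpha=\id$ then Lemma~\ref{red2} gives $-K_F$ not big, contradicting $F\cong\PP^1$; otherwise one factors through an elliptic-curve quotient $Y\to E$, so that the fibers over $E$ are elliptic ruled surfaces, and a case analysis of their anti-pluricanonical systems (Lemma~\ref{EllipticRuled2}) produces either a $\sigma$-stable base locus or an equivariant map to a ruled surface carrying a wild automorphism---both impossible. For $q=1$ (Proposition~\ref{Prop_q1}) one runs the relative MMP over the elliptic base, uses Lesieutre's constraints on positive-entropy smooth threefold automorphisms to force $d_1(\sigma)=1$, shows $\sigma^*$ has infinite order on $\NS(X_\eta)$ via the negativity of the contracted locus, and then in the parabolic case applies a Zariski-decomposition argument (Lemma~\ref{Og_lem}) on the geometric generic fiber to manufacture a $\sigma$-stable divisor. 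None of this is replaced by a fixed-point argument on the fiber.

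For the Calabi--Yau zero-entropy step you are working far harder than necessary, and your description of the Hodge--Riemann form is off. Since $b_1(X)=0$, all of $H^3(X,\C)$ is primitive, so the Hodge--Riemann pairing on each $H^{p,q}$-piece of $H^3$ is \emph{definite}, not indefinite. Hence $\sigma^*$ acts unitarily on $H^3(X,\C)$; every eigenvalue there has modulus one, and being algebraic integers they are roots of unity by Kronecker. Now the vanishing of all topological Lefschetz numbers forces the eigenvalue multisets of $\sigma^*$ on $H^{\mathrm{odd}}$ and $H^{\mathrm{even}}$ to coincide (Proposition~\ref{EulerLefschetz}), so the same holds on $H^{\mathrm{even}}$ and the entropy is zero. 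No growth comparison, no bound of the form $\rho(\sigma^*|_{H^{2,1}})\le\sqrt{d_1d_2}$ (which is not a standard inequality in any case), is needed.
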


Here, a projective variety $X$ is a {\it Calabi Yau manifold} if its canonical line bundle $\OO(K_X)$
is torsion and its topological fundamental group
is finite.

\begin{remark}\label{CYrem}
We believe that a Calabi-Yau threefold $X$ should not have any wild automorphism $\sigma$. This is confirmed if \cite[Question 2.6]{Og_IJM} (= Conjecture \ref{Og_conj}) is affirmative
for $X$ and this is also equivalent to the projective simplicity of the twisted homogeneous coordinate (non-commutative) ring of any $\sigma$-ample line bundle of $\sigma$ with zero entropy (cf.~\cite[Proposition 2.2]{RRZ} and \cite[Theorem 1.2 (2)]{Ke}). See Section \ref{zero_ent} for details.
\end{remark}

Let $X$ be a projective variety.
As is already indicated in Remark \ref{CYrem}, the motivation of studying wild automorphism arises from \cite[Proposition 0.1]{RRZ} where
it is proved that the {\it twisted homogeneous
coordinate ring}
$$B(X, L, \sigma) = \oplus_{n=0}^{\infty} \, H^0(X, \, \otimes_{i=0}^{n-1} \, (\sigma^i)^*L)$$
associated to any $\sigma$-ample line bundle $L$ on $X$
is projectively simple if and only if $\sigma$ is a wild automorphism on $X$
(cf.~\cite[Proposition 2.2]{RRZ});
every ample line bundle on an abelian variety is $\sigma$-ample if $\sigma$ is wild (\cite[Corollary 8.6]{RRZ}). More generally, by \cite[Theorem 1.2]{Ke}, $X$ admits a $\sigma$-ample line bundle if and only if $\sigma$ has zero entropy, and in this case, any ample line bundle on $X$ is $\sigma$-ample. For this reason, the following weaker conjecture would also be interesting toward Conjecture \ref{stronger} (\cite[Conjecture 0.3]{RRZ}).

\begin{conjecture}\label{weaker}
Every wild automorphism $\sigma$ of a projective variety $X$ has zero entropy.
\end{conjecture}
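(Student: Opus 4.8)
The plan is to reduce the Kodaira dimension $\kappa(X)$ first and then exploit the canonically defined, hence $\sigma$-equivariant, fibrations attached to $X$. Two consequences of wildness are used constantly: $\Sing(X)$ is a $\sigma$-invariant proper closed subset, so $X$ is smooth; and no nonempty proper closed $Z$ satisfies $\sigma^k(Z)=Z$ for some $k\ge 1$ (otherwise $\bigcup_{i=0}^{k-1}\sigma^i(Z)$ is $\sigma$-invariant, nonempty and proper), so $\sigma$ has no periodic subvariety. A third device is the inheritance principle: for a $\sigma$-equivariant surjection $f\colon X\to Y$ with connected fibres and $\dim Y>0$, the induced $\bar\sigma$ is wild on $Y$, since $f^{-1}$ carries $\bar\sigma$-invariant nonempty proper closed subsets to $\sigma$-invariant ones. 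Now, as $\sigma^*\OO(K_X)\cong\OO(K_X)$, the map $\sigma$ acts linearly on each $H^0(X,mK_X)$, so $\Bs\,|mK_X|$ is $\sigma$-invariant and must be empty once $\kappa(X)\ge 0$; the Iitaka fibration is then an honest equivariant morphism $\Phi\colon X\to Y_m$ onto a base $Y_m$ of general type, whose automorphism group is finite (Matsumura). Hence $\sigma^k$ fixes $Y_m$ pointwise for some $k$, and then either a fibre of $\Phi$ is a $\sigma^k$-invariant proper closed subset (when $0<\kappa(X)<\dim X$) or $\sigma^k=\id$ on $X$ (when $\kappa(X)=\dim X$, since $\Phi$ is birational), forcing $\sigma^k$-fixed points; both contradict wildness. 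Thus $\kappa(X)\in\{-\infty,0\}$.

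For $\kappa(X)=-\infty$ I would show that no wild automorphism exists, so this case cannot occur. In dimension $\le 3$ this forces $X$ to be uniruled, and I pass to the equivariant maximal rationally connected fibration $X\dashrightarrow Z$. If $Z$ is a point, then $X$ is rationally connected, $H^i(X,\OO_X)=0$ for $i>0$, and the holomorphic Lefschetz number of $\sigma$ equals $\operatorname{tr}(\sigma^*\mid H^0(X,\OO_X))=1\ne 0$; by the Atiyah--Bott fixed point formula $\sigma$ has a fixed point, contradicting wildness. If $0<\dim Z<\dim X$, then $Z$ is non-uniruled and inherits a wild automorphism, so by the solved curve and surface cases (\cite{RRZ}) $Z$ is abelian; the coexistence of a rationally connected fibration over an abelian base with the absence of $\sigma$-periodic subvarieties in the (rationally connected) fibres then produces the contradiction.

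The remaining case $\kappa(X)=0$ I would organise by the irregularity $q=\dim\Alb(X)$ along the equivariant Albanese morphism $a\colon X\to A$, which is surjective with connected fibres (Kawamata); this matches Propositions \ref{PropB}, \ref{Prop_q2}, \ref{Prop_q1} and \ref{PropC}. If $q=\dim X$ then $a$ is birational and, since wildness forbids any exceptional locus, an isomorphism, so $X$ is abelian. If $q\in\{1,2\}$ the fibres are elliptic curves or $\kappa=0$ surfaces; the discriminant is $\bar\sigma$-invariant, hence empty, so the fibration is smooth, and after excluding K3 and Enriques fibres it is isotrivial, forcing $X$ to be abelian. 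If $q=0$ one is reduced to $\kappa(X)=0$, $q=0$ threefolds, namely Calabi--Yau threefolds and close relatives.

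The hard part is exactly this $q=0$ stratum, together with the zero-entropy assertion. One must first rule out the non-abelian, non-Calabi--Yau possibilities (such as threefolds fibred in K3 or Enriques surfaces) by manufacturing a $\sigma$-invariant subvariety from the fibration data. For a genuine Calabi--Yau threefold one must then show $\sigma$ has zero entropy: the identity $L(\sigma)=0$ already forces $\sigma^*$ to preserve the holomorphic $3$-form, and I would argue that positive entropy would produce a nef class $D_+$ with $\sigma^*D_+=\lambda D_+$, $\lambda>1$, whose nef reduction is a $\sigma$-equivariant fibration onto a positive-dimensional base of smaller dimension; the inheritance principle together with the low-dimensional cases would then contradict $\lambda>1$. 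Making this eigenclass fibration work, and more generally excluding periodic points in the positive-entropy regime, is the crux, and it is precisely why the Calabi--Yau case cannot yet be eliminated outright.
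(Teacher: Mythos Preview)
First, the statement you are attempting is a conjecture that the paper does \emph{not} prove in general; Theorem \ref{ThB} establishes it only for $\dim X\le 3$ and in some $\dim X=4$ cases. Your write-up implicitly restricts to $\dim X\le 3$, but even there the organisation is inverted and the main step is missing. Propositions \ref{PropB}, \ref{Prop_q2}, \ref{Prop_q1} treat the \emph{uniruled} case $\kappa(X)=-\infty$, not $\kappa(X)=0$ as you assert; in those propositions the Albanese fibres are rational surfaces or $\PP^1$, not elliptic curves or $\kappa=0$ surfaces. The $\kappa(X)=0$ case is handled directly by Beauville--Bogomolov (Proposition \ref{PropA}(3)): in dimension $3$ the minimal split cover is either abelian or a strict Calabi--Yau threefold, and Proposition \ref{Qtorus} disposes of nontrivial quotients. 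Your Albanese analysis with $q\in\{1,2\}$ in the $\kappa=0$ case is therefore misplaced, and the chain ``exclude K3/Enriques, then isotrivial, then abelian'' is unjustified. The genuine gap is your $\kappa(X)=-\infty$ paragraph: the sentence ``the coexistence of a rationally connected fibration over an abelian base with the absence of $\sigma$-periodic subvarieties \dots\ produces the contradiction'' is not an argument. This is precisely where the paper does its hard work: for $q=2$ (Proposition \ref{Prop_q2}) one analyses $\kappa(-K_S)$ for the elliptic ruled surfaces $S$ arising as fibres of $X\to E=Y/B$ (Lemma \ref{EllipticRuled2}) and manufactures a $\sigma$-stable proper closed subset from base loci of $|-mK_X|$ or from an induced elliptic fibration; for $q=1$ (Proposition \ref{Prop_q1}) one runs relative MMP over $Y$, invokes Lesieutre's constraints to exclude positive entropy, and then analyses the parabolic isometry on $\NS(X_\eta)$ via Lemma \ref{Og_lem}. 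None of this is present in your proposal.

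Your zero-entropy argument for a Calabi--Yau threefold is also different from the paper's and does not work as stated. The eigenclass $D_+$ with $\sigma^*D_+=\lambda D_+$ lies only in $\NS_{\R}(X)$; ``nef reduction'' of an irrational nef class need not exist as a morphism, let alone a $\sigma$-equivariant fibration with positive-dimensional base, so your inheritance argument cannot start. The paper's route (Corollary \ref{cy3}) is cohomological: since every $\sigma^n$ is fixed-point free, all topological Lefschetz numbers vanish, and Proposition \ref{EulerLefschetz} deduces that $\sigma^*$ has the \emph{same} multiset of eigenvalues on $H^{\mathrm{odd}}$ and on $H^{\mathrm{even}}$. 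For a Calabi--Yau threefold $q=0$ forces $H^{\mathrm{odd}}=H^3$, which is primitive, so the Hodge--Riemann bilinear relations make $\sigma^*|_{H^3}$ unitary; by Kronecker its eigenvalues are roots of unity, hence so are those on $H^{\mathrm{even}}\supset H^2$, giving $d_1(\sigma)=1$.
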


Theorem \ref{ThB} (1) below (part of Proposition \ref{PropC}),
is also the key in giving a complete proof of Theorem \ref{Thm_Ki}, thus, together with Theorem \ref{ThA}, proving Conjecture \ref{stronger} in dimension three, assuming Conjecture \ref{Og_conj}.

\begin{theorem}\label{ThB}
Conjecture \ref{weaker} is true if either one of the following cases occurs.
\item[(1)]
$\dim X \le 3$.
\item[(2)]
$\dim X = 4$, and the Kodaira dimension $\kappa(X) \ge 0$.
\item[(3)]
$\dim X = 4$, $\kappa(X) = -\infty$, and the irregularity $q(X) \ne 1, 2$.
\end{theorem}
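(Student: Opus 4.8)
The plan is to prove Conjecture \ref{weaker} in the listed ranges by combining the fixed-point rigidity forced by wildness with the reduction of entropy along equivariant fibrations, and then to isolate the single surviving building block---an odd-dimensional Calabi--Yau factor, in our range a Calabi--Yau threefold---as the essential difficulty. First I would record the standing reductions. Since the singular locus $\Sing(X)$ is $\sigma$-invariant and proper, wildness gives $\Sing(X)=\emptyset$, so $X$ is smooth; and since every periodic point would produce a nonempty proper $\sigma$-invariant finite subset, $\sigma$ and all its powers $\sigma^n$ act without fixed points. By Gromov--Yomdin the entropy equals $\log\rho$, where $\rho$ is the spectral radius of $\sigma^*$ on $\bigoplus_p H^p(X)$, equivalently the largest dynamical degree, so proving zero entropy means proving that every dynamical degree is $1$. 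Two tools then drive everything: (i) the holomorphic and topological Lefschetz formulas applied to \emph{all} powers $\sigma^n$, which by the absence of fixed points force $\sum_i(-1)^i\Tr(\sigma^{n*}\mid H^i(X,\OO_X))=0$ and $\sum_i(-1)^i\Tr(\sigma^{n*}\mid H^i(X,\C))=0$ for every $n$; and (ii) the behaviour of dynamical degrees under a $\sigma$-equivariant dominant fibration $X\ratmap Y$ (Dinh--Nguyen--Truong), by which the first degree of $\sigma$ is $1$ as soon as the descended map on $Y$ and the fibre dynamics both have trivial first degree. I would apply (ii) to the three canonical $\sigma$-equivariant fibrations---Albanese, Iitaka, and maximal rationally connected (MRC)---noting that a periodic point downstairs pulls back to a proper invariant subset upstairs, so wildness propagates the ``no periodic points'' property to every base.

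Next I treat $\kappa(X)\ge 0$ (case (2), and the relevant part of (1)). Choosing $m$ with $H^0(X,mK_X)\ne 0$, a $\sigma^*$-eigensection $s$ has $\sigma$-invariant zero divisor $\divv(s)$, which by wildness must be empty; hence $mK_X\cong\OO_X$, $K_X$ is torsion and $\kappa(X)=0$. Kawamata's theorem then makes the Albanese $\alpha\colon X\to\Alb(X)$ a surjective $\sigma$-equivariant fibre space whose induced affine automorphism of $\Alb(X)$ has no periodic points, hence---as in the abelian-variety classification underlying Theorem \ref{RRZth}---zero entropy; by (ii) I am reduced to a smooth fibre $F$ with $\kappa(F)=0$ and $q(F)=0$. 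Here (i) becomes decisive: writing the eigenvalues of $\sigma^*$ on $\bigoplus_i H^i(X,\OO_X)$ and matching even against odd degrees for all $n$ forces the two multisets to coincide, so the class $1\in H^0(X,\OO_X)$ must be cancelled against an \emph{odd} $H^{0,i}$. Via the Beauville--Bogomolov decomposition of $F$ into abelian, Calabi--Yau and hyperk\"ahler factors, this kills every hyperk\"ahler factor, every even-dimensional strict Calabi--Yau factor, and in particular every K3 surface, since for those $h^{0,\mathrm{odd}}=0$ and the even side cannot be cancelled; the surviving $q=0$ block in our dimension range is an odd Calabi--Yau, namely a Calabi--Yau threefold.

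Finally the case $\kappa(X)=-\infty$ (case (3), and its part of (1)): here $X$ is uniruled, the MRC fibration $X\ratmap Z$ is $\sigma$-equivariant with $Z$ non-uniruled (Graber--Harris--Starr), so $\kappa(Z)\ge 0$ and $\dim Z<\dim X$; the previous paragraph, or induction on dimension, gives trivial first degree on $Z$, and the rationally connected fibres contribute no first-degree growth, whence zero entropy by (ii). The exclusions $q(X)\ne 1,2$ in dimension four are exactly what keeps the Albanese or MRC fibre dimension small enough---fibres of dimension $\le 1$, or reducible only into pieces already settled above---to run this reduction; fibre dimensions $2$ and $3$ arising from $q=2$ and $q=1$ leave uniruled surface and threefold fibres whose relative dynamics I cannot yet control, which is why those subcases are omitted.

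\textbf{I expect the genuine obstacle to be the Calabi--Yau threefold case}, where---as Remark \ref{CYrem} stresses---wild automorphisms are not even known to be absent, so zero entropy must be extracted cohomologically. For a threefold, log-concavity of the dynamical degrees makes zero entropy equivalent to the single inequality $d_1\le 1$, so it suffices to rule out a nef eigenclass with eigenvalue $>1$. My concrete attempt would pair a putative expanding nef eigenclass $\theta$, with $\sigma^*\theta=d_1\theta$ and $d_1>1$ (obtained by a Perron--Frobenius argument of Dinh--Sibony on the $\sigma^*$-invariant nef cone), against the $\sigma$-invariant class $c_2(X)$: since $\sigma^*$ fixes $H^6(X)\cong\R$, one gets $d_1\,(\theta\cdot c_2(X))=\theta\cdot c_2(X)$ and hence $\theta\cdot c_2(X)=0$, which by Miyaoka's nonnegativity of $c_2$ on a minimal threefold should force $\theta=0$ unless $X$ is special, with $c_2(X)$ orthogonal to a boundary face of the nef cone. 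Handling that residual fibred case---showing such a $\theta$ is rational and semiample, so that its associated fibration is $\sigma$-invariant and yields a proper invariant subset contradicting wildness---is the delicate point, and making this dichotomy airtight is where the real work lies.
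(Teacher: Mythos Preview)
Your reductions for $\kappa(X)\ge 0$ (torsion $K_X$, Beauville--Bogomolov, elimination of hyperk\"ahler and even Calabi--Yau factors via the holomorphic Lefschetz identities) match the paper, and for $q(X)\ge \dim X-1$ your Albanese/product-formula argument is exactly Proposition~\ref{irregularity}. But there are two genuine gaps.

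\textbf{The Calabi--Yau threefold.} You correctly isolate this as the crux, but your $c_2$/Miyaoka approach is speculative and, as you yourself say, not airtight: $\theta\cdot c_2(X)=0$ for a nef class $\theta$ on a Calabi--Yau threefold does \emph{not} force $\theta$ to be rational or semiample in general. The paper settles this case by pushing your tool (i) further. The topological Lefschetz identities $\sum_i(-1)^i\Tr((\sigma^n)^*|H^i(X,\Z)_f)=0$ for all $n$, via Newton's identities, say that the eigenvalue \emph{multisets} of $\sigma^*$ on $\bigoplus H^{2k+1}$ and on $\bigoplus H^{2k}$ coincide (Proposition~\ref{EulerLefschetz}). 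For a strict Calabi--Yau threefold $b_1=b_5=0$, so the odd side is just $H^3(X,\C)$, which is \emph{primitive}; the Hodge--Riemann bilinear relations then make $\sigma^*$ unitary on each $H^{p,q}\subset H^3$. Hence every eigenvalue on $H^3$ has modulus $1$, and by Kronecker is a root of unity; by the multiset equality the same holds on $H^{\text{even}}$, in particular on $H^{1,1}$, so $d_1(\sigma)=1$ (Corollary~\ref{cy3}). You had the Lefschetz input but missed the Hodge--Riemann step, which is the actual engine.

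\textbf{The case $\dim X=4$, $q(X)=0$, $\kappa(X)=-\infty$.} Your MRC reduction does not work here: the claim that ``rationally connected fibres contribute no first-degree growth'' is false---smooth rational surfaces and threefolds can carry automorphisms of positive entropy, so the relative first dynamical degree in the product formula need not be $1$. The paper does \emph{not} use MRC in this case at all; it runs the same Lefschetz/Hodge--Riemann mechanism directly (Proposition~\ref{fourfolds}). Since $b_1=b_7=0$, again $H^3$ is primitive. If $a_1=d_1(\sigma)>1$, the multiset equality puts $a_1$ among the eigenvalues on $H^3$ or $H^5$; picking an eigenvector $u\in H^{p,q}\subset H^3$ (after passing to $\sigma^{-1}$ in the $H^5$ case), Hodge--Riemann gives $u\overline{u}\ne 0$ in $H^{3,3}(X,\R)$ with $\sigma^*(u\overline{u})=a_1^2\,u\overline{u}$, whence $a_1^2\le d_3(\sigma)\le a_1$ and so $a_1=1$. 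Thus the argument you need for both gaps is the same idea: upgrade tool (i) to the full eigenvalue-matching statement, and feed it into the Hodge--Riemann positivity on primitive odd cohomology.
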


\par \vskip 1pc
\noindent
{\bf Acknowledgement.}
The authors would like to thank
J.~A.~Chen, M.~Satriano, Ch.~Schnell and J.~Xie for the valuable discussions, and the referee for several constructive comments to improve the paper.

\section{Preliminary results in arbitrary dimensions}

In this section, we collect some results mostly noticed in \cite{RRZ}.
We present them in a slightly different and expanded way.
The main new ingredients are Proposition \ref{FL} (Fujiki and Lieberman type result) and Lemma \ref{MRC} on the maximal rationally connected fibration (a special choice of the birational model: Chow reduction) after \cite{Na}. For instance, Proposition \ref{FL} can be used to prove Theorem \ref{ThA} for $\dim X \le 2$ without any detailed analysis of the geometry of ruled surfaces. See the proof of Theorem \ref{RRZth}.

First of all, the following easy remark justifies our assumption on the base field.

\begin{proposition}\label{BaseField}
Let the base field $k$ be an algebraic closure of a finite field $\F_{q}$. Then no automorphism of any projective variety of positive dimension (over $k$) is wild.
\end{proposition}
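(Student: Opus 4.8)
The plan is to exploit the arithmetic finiteness that becomes available over $\overline{\F_q}$. The key observation is that if $X$ is a projective variety over $k = \overline{\F_q}$ of positive dimension, then $X$ is defined over some finite field $\F_{q^m}$, and any given automorphism $\sigma$ is also defined over a finite extension, say $\F_{q^N}$. The decisive point is then that the set of $\F_{q^N}$-rational points $X(\F_{q^N})$ is \emph{finite}, and $\sigma$ acts on this finite set by a permutation. Consequently some positive power $\sigma^r$ fixes every point of $X(\F_{q^N})$.

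First I would make precise the descent: choose a model of the pair $(X, \sigma)$ over a finite field $\F_{q^N}$, so that $\sigma$ restricts to a bijection of each finite set $X(\F_{q^{Nj}})$ for every $j \ge 1$. Next I would produce a $\sigma$-invariant proper closed subset, which directly contradicts wildness. The cleanest route is to find a single closed point that is invariant: since $X(\F_{q^N})$ is finite and nonempty for $N$ large (here one uses that $X$ has positive dimension over an infinite field, so it certainly has points over sufficiently large finite fields, e.g.\ by Lang--Weil or simply because $\dim X \ge 1$ forces the point count to grow), the permutation $\sigma$ on $X(\F_{q^N})$ has a finite orbit. That finite orbit is a nonempty Zariski-closed $\sigma$-invariant subset, and because $\dim X \ge 1$ while a finite set of points has dimension $0$, this subset is proper. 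This is exactly a violation of the defining property of a wild automorphism, so no such $\sigma$ can be wild.

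The one technical point to be careful about is ensuring the invariant closed subset is genuinely \emph{proper} and \emph{nonempty}: nonemptiness requires knowing $X$ acquires rational points over some finite subfield over which $\sigma$ is also defined, and properness uses $\dim X \ge 1$ so that a zero-dimensional orbit cannot be all of $X$. Both are guaranteed once $N$ is taken large enough that $X(\F_{q^N}) \neq \emptyset$; for a positive-dimensional variety over $\overline{\F_q}$ this always happens, since every closed point of $X$ has residue field a finite extension of $\F_q$ and hence lies in $X(\F_{q^M})$ for suitable $M$.

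The main obstacle I anticipate is purely bookkeeping rather than conceptual: one must align the field of definition of $X$ with that of $\sigma$ and verify that the Frobenius-stable model behaves well, i.e.\ that $\sigma$ really does permute the finite point sets $X(\F_{q^M})$ compatibly as $M$ grows. Once the descent to a common finite field is set up, the argument is immediate, because the contrast between a finite orbit (dimension $0$) and the positive dimension of $X$ instantly yields the required proper invariant subset. In short, wildness is an essentially ``characteristic zero'' or at least ``infinite ground field'' phenomenon, and the finiteness of $\F_{q^M}$-points over $\overline{\F_q}$ obstructs it at once.
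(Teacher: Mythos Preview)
Your proposal is correct and follows essentially the same approach as the paper: descend $(X,\sigma)$ to a finite subfield $k_0$, observe that $X(k_0)$ is a nonempty finite set stable under $\sigma$, and conclude it is a proper $\sigma$-invariant closed subset since $\dim X>0$. The only difference is that you pass to a single $\sigma$-orbit inside $X(k_0)$ (and mention a power $\sigma^r$), whereas the paper simply uses the entire finite set $X(k_0)$ directly; this extra step is harmless but unnecessary.
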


\begin{proof}
Suppose the contrary that $X$ is a positive dimensional projective variety over $k$ and $\sigma$ is an automorphism of $X$ over $k$. Since $k$ is an algebraic closure of $\F_{q}$, both $X$ and $\sigma$ are defined over some field $k_0$, which is a finite extension of $\F_{q}$. In particular, $k_0$ is a finite field as well. Replacing $k_0$ by a finite extension if necessary, we may assume further that $X(k_0) \not= \emptyset$. Here $X(k_0)$ is the set of $k_0$-rational points of $X$. Since $k_0$ is a finite field, $X(k_0)$ is a finite set. Moreover, since $\sigma$ is defined over $k_0$, we have $\sigma(X(k_0)) = X(k_0)$. Since $X(k_0) \not= X$ from $\dim\, X > 0$, it follows that $\sigma$ is not wild if $\dim X > 0$.
\end{proof}

\begin{proposition}\label{RRZp} (cf.~\cite[Proposition 3.2, Remark 8.3]{RRZ})
Let $X$ be a projective variety.
Suppose that an algebraic
group $G \subseteq \Aut(X)$ acts regularly on $X$ such that $\sigma \in G$ is a wild automorphism.
Then $X$ is an abelian variety and $\sigma$ is a torus translation of infinite order, with the origin of $X$ chosen in any way.
\end{proposition}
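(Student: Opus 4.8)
The plan is to analyze the structure of the algebraic group $G$ acting on $X$ and exploit the wildness of $\sigma \in G$ to force $X$ into the shape of a homogeneous space, then pin it down to an abelian variety. First I would observe that the orbit $G \cdot x$ of any point $x \in X$ is a $G$-invariant locally closed subset whose Zariski closure $\overline{G \cdot x}$ is $G$-invariant, hence in particular $\sigma$-invariant. By wildness of $\sigma$, this closure must be all of $X$; combined with the fact that orbits of algebraic group actions are open in their closures and that there is always a closed orbit (of minimal dimension), the closed orbit itself is $\sigma$-invariant and therefore equals $X$. Thus $G$ acts transitively on $X$, so $X \cong G/H$ is a homogeneous space, and in particular $X$ is smooth.

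Next I would identify the structure of $X$ as a homogeneous projective variety. The key point is to rule out any nontrivial rational or unipotent directions. A natural route is to consider the Albanese map $\alb\colon X \to \Alb(X)$, which is equivariant for the $G$-action (the Albanese variety being a functorial invariant), so the image and the fibers are $G$-invariant, hence $\sigma$-invariant. Wildness then forces either the Albanese map to be trivial or to be an isomorphism onto an abelian variety. The hard part, and the main obstacle, is excluding the case where $\alb$ is trivial (i.e.\ $q(X) = 0$): one must show a homogeneous projective variety carrying a \emph{wild} automorphism cannot be rationally connected or more generally have a nontrivial maximal rationally connected fibration. Here I would invoke the fixed-point perspective: a wild automorphism can have no fixed points at all (a fixed point is an invariant proper closed subset), yet on a rationally connected projective variety one expects a fixed point from a Lefschetz-type or group-cohomological argument applied to the identity component acting with a fixed point. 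This forces the rationally connected part to be trivial.

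Once $X$ is shown to be an abelian variety (or at least a homogeneous space whose only admissible model is an abelian variety), the description of $\sigma$ follows from the group structure. After fixing the origin arbitrarily, $\Aut(X)$ decomposes as $\Aut(X, 0) \ltimes X$, where $X$ acts by translations and $\Aut(X,0)$ fixes the origin. Writing $\sigma = T_a \circ h$ with $T_a$ a translation and $h \in \Aut(X,0)$, I would argue that the linear part $h$ must be trivial: if $h$ were nontrivial, the fixed-point set of $\sigma$ would be nonempty (solving $h(x) + a = x$, i.e.\ $(h - \id)(x) = -a$, which has a solution whenever $h - \id$ is an isogeny, and more generally the nontrivial $h$-eigenspace structure produces an invariant proper abelian subvariety or a fixed coset), contradicting wildness. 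Hence $\sigma$ is a pure translation $T_a$, and it has infinite order because a finite-order translation generates a finite cyclic group whose orbit closures are proper $\sigma$-invariant subsets.

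Finally I would confirm that such a translation is genuinely wild and that the ``torus translation'' phrasing is justified: a translation $T_a$ is wild precisely when the closure of the subgroup $\langle a \rangle$ generated by $a$ is all of $X$, i.e.\ when $a$ is a topological generator, and the closure of any subgroup of a real torus $X^{{\rm an}}$ is a subtorus, so the orbit closure $\overline{\langle a \rangle}$ is a subtorus that must equal $X$ by wildness. This shows $\sigma$ is a translation by an element generating a dense subgroup of the real torus underlying $X$, which is exactly the statement. I expect the genuine difficulty to reside entirely in the second paragraph: transitivity of the action and the group decomposition are formal, but excluding nontrivial rational connectedness for a homogeneous space admitting a wild automorphism is where the geometric input (absence of fixed points together with fixed-point theorems for connected solvable or reductive actions on rational varieties) must be brought to bear.
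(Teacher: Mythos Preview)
The paper does not supply its own proof of this proposition; it simply cites \cite[Proposition 3.2, Remark 8.3]{RRZ}. So there is no argument in the paper to compare against directly, but it is still worth assessing your proposal against the actual argument behind that citation.

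Your first paragraph (transitivity of the $G$-action via wildness) is correct and is indeed the opening move. However, you are missing the key simplification that makes the rest of the argument short: replace $G$ by the Zariski closure $H$ of the cyclic group $\langle \sigma \rangle$ inside $G$. This $H$ is a \emph{commutative} algebraic group containing $\sigma$, and your transitivity argument applies verbatim to $H$. Then $X \cong H/K$ is a quotient of a commutative algebraic group by a (necessarily normal) closed subgroup, hence is itself a commutative algebraic group; being projective, $X$ is an abelian variety. Moreover $\sigma \in H$ acts on $X = H/K$ by left multiplication, i.e.\ by translation. This bypasses your entire second paragraph: no Albanese map, no ruling out of rationally connected fibres, no fixed-point theorems for reductive or solvable pieces are needed.

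Your third paragraph contains a genuine error. You attempt to show that \emph{any} wild automorphism of an abelian variety has trivial linear part $h$. This is false: as recorded in \cite[Theorem 0.2]{RRZ} (and used repeatedly in the present paper, e.g.\ in the proof of Proposition~\ref{Prop_q2}), a wild automorphism of an abelian variety has the form $T_b \circ \alpha$ where $\alpha - \id$ is merely \emph{nilpotent}, not necessarily zero. Your proposed dichotomy ``$h - \id$ is an isogeny, or there is an invariant proper abelian subvariety'' does not force a contradiction with wildness. The conclusion that $\sigma$ is a pure translation is \emph{not} a general fact about wild automorphisms of abelian varieties; it genuinely requires the extra hypothesis $\sigma \in G$ for an algebraic group $G$, and the mechanism is exactly the commutativity-of-the-closure trick above, which you did not invoke.
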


The result below is proved in \cite{Fu} and \cite{Li} and generalized in \cite[Theorem 1.2]{DHZ}. See also \cite[Theorem 1.4]{LiSi} for a purely algebraic proof. We note that $\Aut_0(X)$ is a connected group; hence we get the triviality of its action on the (discrete) lattice $\NS(X)/(\torsion)$ as well as its action on $\NS(X)_{\R} = \NS(X) \otimes_{\Z} \R$.

\begin{proposition}\label{FL} (cf.~\cite{Fu}, \cite{Li}, \cite{DHZ}, \cite{LiSi})
Let $X$ be a normal projective variety and
we fix a big Cartier divisor class $[\omega] \in \NS(X)_{\R}$ of $X$.
Then the group
$$\Aut_{[\omega]}(X) := \{g \in \Aut(X) \, | \, g^*[\omega] = [\omega]\}$$
is a finite extension of the identity connected component $\Aut_0(X)$ of $\Aut(X)$, that is, $[\Aut_{[\omega]}(X) : \Aut_0(X)] < \infty$.
\end{proposition}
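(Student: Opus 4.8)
The plan is to split the statement into a ``connected'' part and a ``discrete'' part. First I would observe that $\Aut_0(X)$ is automatically contained in $\Aut_{[\omega]}(X)$: the space $N^1(X)$ is the real span of the finitely generated discrete group $\NS(X) = \Pic(X)/\Pic^0(X)$, so the connected group $\Aut_0(X)$ acts trivially on it and in particular fixes $[\omega]$. Since $\Aut_0(X)$ is normal in $\Aut(X)$, the subgroup $\Aut_{[\omega]}(X)$ is a union of connected components of $\Aut(X)$, and the desired inequality $[\Aut_{[\omega]}(X):\Aut_0(X)]<\infty$ is equivalent to the assertion that $\Aut_{[\omega]}(X)$ meets only finitely many components of $\Aut(X)$.

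To bound the number of such components I would realize each $g\in\Aut(X)$ by its graph $\Gamma_g\subseteq X\times X$ and control the degree of $\Gamma_g$ with respect to a fixed ample polarization, so that the graphs coming from $\Aut_{[\omega]}(X)$ form a bounded family parametrized by a finite-type subscheme of the Hilbert scheme of $X\times X$; boundedness of the Hilbert scheme then confines $\Aut_{[\omega]}(X)$ to finitely many components. Fix an ample class $[H]\in N^1(X)$, put $D:=\pi_1^*[H]+\pi_2^*[H]$ (ample on $X\times X$), and identify $\Gamma_g$ with $X$ via $\pi_1$, so that
$$\deg_D \Gamma_g \;=\; \int_{\Gamma_g} D^n \;=\; \sum_{k=0}^{n}\binom{n}{k}\,[H]^{n-k}\cdot (g^*[H])^{k}.$$
Thus the degrees are bounded as soon as the classes $g^*[H]$ range over a bounded subset of $N^1(X)$.

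The heart of the matter, and where bigness of $[\omega]$ is used, is controlling $g^*[H]$. By Kodaira's lemma there is an integer $m$ with $m[\omega]-[H]$ pseudo-effective; since $g$ preserves $[\omega]$ and the pseudo-effective cone, for every $g\in\Aut_{[\omega]}(X)$ we get
$$g^*[H]\;\in\;\Nef(X)\cap\bigl(m[\omega]-\overline{\operatorname{Eff}}(X)\bigr),$$
where $g^*[H]$ is again ample, hence nef. I would then show this set is bounded: pairing against a fixed ample $A$ gives $0\le \nu\cdot A^{n-1}\le m\,[\omega]\cdot A^{n-1}$ for every $\nu$ in it, and since $\Nef(X)$ is closed and salient while $\nu\mapsto \nu\cdot A^{n-1}$ is strictly positive on $\Nef(X)\setminus\{0\}$ (as $A^{n-1}$ lies in the interior of the movable cone of curves), a slice of $\Nef(X)$ on which this functional is bounded is compact. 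As the $g^*[H]$ are integral points of $N^1(X)$ lying in a bounded set, they take only finitely many values; hence $\deg_D\Gamma_g$ is bounded and the Hilbert-scheme argument of the previous paragraph applies.

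I expect the main obstacle to be exactly this boundedness step. The polarization $\pi_1^*[\omega]+\pi_2^*[\omega]$ coming directly from $[\omega]$ is only big, not ample, so the clean Fujiki--Lieberman volume-of-graph computation, in which $\int_X(\omega+g^*\omega)^n$ is manifestly a constant, does not by itself bound the family of graphs; one must reroute through a genuinely ample $[H]$ and extract the boundedness of $\{g^*[H]\}$ from the positivity of $[\omega]$, which relies on the salience of $\Nef(X)$ and on care with intersection theory on the normal, possibly singular, variety $X$ (handled either by working with Cartier classes directly or by passing to a resolution). Once boundedness of $\{g^*[H]\}$ is in hand, the remaining steps are standard applications of the theory of Hilbert and Chow schemes.
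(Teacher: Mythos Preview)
The paper does not supply its own proof of this proposition: it is stated with references to \cite{Fu}, \cite{Li}, \cite{DHZ}, \cite{LiSi} and used as a black box. Your proposal is the standard Fujiki--Lieberman graph-bounding argument, upgraded from an ample to a big class via Kodaira's lemma, which is exactly the route taken in the cited generalization \cite{DHZ}; the argument is correct as outlined, so there is nothing substantive to compare.

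One small remark on presentation: in the boundedness step you should state explicitly that $g^*$ preserves the pseudo-effective cone (you use this when you push $m[\omega]-[H]$ through $g^*$), and it is cleaner to phrase the compactness of $\{\nu\in\Nef(X):\nu\cdot A^{n-1}\le C\}$ directly as ``a closed salient cone intersected with a half-space transverse to its interior is compact,'' rather than invoking the movable cone of curves, which is unnecessary here since $A^{n-1}$ is already an interior point of the nef-dual cone. With those cosmetic adjustments the write-up would be complete.
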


Since $\Aut_0(X)$ is an algebraic group when $X$ is projective, the above two propositions with Lemma \ref{fin} (2) below imply the
following.

\begin{corollary}\label{FLc}
Let $X$ be a projective variety with a wild automorphism $\sigma$.
Suppose a positive power of $\sigma$ fixes a big divisor class of $X$
(this is the case when the action of $\sigma$ on the N\'eron Severi group $\NS(X)$ is of finite order).
Then $X$ is an abelian variety.
\end{corollary}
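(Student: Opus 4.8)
The plan is to realize a wild automorphism inside the algebraic group $\Aut_0(X)$ and then quote Proposition \ref{RRZp}. First I would dispose of the parenthetical remark: every projective variety carries a big class, for instance an ample one $[\omega] \in N^1(X)$, and if $\sigma$ acts on $\NS(X)$ with finite order then a suitable power $\sigma^n$ acts trivially on $\NS(X)$ and hence satisfies $(\sigma^n)^*[\omega] = [\omega]$; so the finite-order condition is a special case of the hypothesis that some positive power of $\sigma$ fixes a big class, and it suffices to treat the latter. I would also record at the outset that $X$ is automatically normal (indeed smooth): its non-normal locus (resp.\ singular locus) is a $\sigma$-invariant proper Zariski-closed subset, hence empty by wildness of $\sigma$. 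This is exactly what licenses the application of Proposition \ref{FL}.

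The core of the argument is then a short chain. By hypothesis $\sigma^n \in \Aut_{[\omega]}(X)$ for the big class $[\omega]$. By Proposition \ref{FL} the index $m := [\Aut_{[\omega]}(X) : \Aut_0(X)]$ is finite, so the image of $\sigma^n$ in the finite quotient group has order dividing $m$, whence $\sigma^{nm} \in \Aut_0(X)$. Since $X$ is projective, $G := \Aut_0(X)$ is an algebraic group acting regularly on $X$, and it contains $\sigma^{nm}$. To apply Proposition \ref{RRZp} with this $G$ I need $\sigma^{nm}$ to be wild, and this is precisely the content of Lemma \ref{fin}(2): a positive power of a wild automorphism is again wild. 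The mechanism is that if $\sigma^{k}(Z) = Z$ for some nonempty proper closed $Z \subsetneq X$, then $W := \bigcup_{i=0}^{k-1} \sigma^{i}(Z)$ is a nonempty $\sigma$-invariant closed subset which is still proper, because a finite union of proper closed subsets of the irreducible variety $X$ is proper; this contradicts wildness of $\sigma$. Hence $\sigma^{nm}$ is wild, and Proposition \ref{RRZp} yields that $X$ is an abelian variety, as required.

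The hard part is not any single computation but rather verifying that the hypotheses of the two cited results are genuinely met, and everything hinges on the same elementary device: a proper closed locus canonically attached to $X$ or to $\sigma$ is $\sigma$-invariant and therefore must be empty (or all of $X$). This is what secures normality (so that Fujiki--Lieberman applies), what secures the persistence of wildness under passing to the power $\sigma^{nm}$, and what guarantees $\sigma^{nm} \neq \id$ has infinite order when $\dim X > 0$ (a finite-order automorphism would have finite point-orbits, i.e.\ proper invariant closed subsets), so that the conclusion ``torus translation of infinite order'' of Proposition \ref{RRZp} is nonvacuous. Once these are in place the corollary is a formal consequence of Propositions \ref{FL} and \ref{RRZp}.
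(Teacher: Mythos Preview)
Your proof is correct and follows essentially the same approach as the paper: the paper simply states that the corollary follows from Propositions \ref{RRZp} and \ref{FL} together with Lemma \ref{fin}(2), and you have spelled out precisely this chain (pass to a power landing in $\Aut_{[\omega]}(X)$, use finite index to land in $\Aut_0(X)$, retain wildness via Lemma \ref{fin}(2), then apply Proposition \ref{RRZp}). Your additional remarks on normality and on the parenthetical finite-order case are correct and make explicit what the paper leaves implicit.
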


We collect some basic properties of wild automorphisms in the two results below.

\begin{lemma}\label{fin} (cf.~\cite[Lemma 3.1, Proposition 4.1, Proposition 5.1]{RRZ})
Let $X$ be a projective variety and let $\sigma$ be an automorphism on $X$.
\begin{itemize}
\item[(1)]
If $\sigma$ is wild then $X$ is smooth.
\item[(2)]
$\sigma$ is wild if and only if so is $\sigma^m$ for some $m \ge 1$ (and hence for all $m \ge 1$).

\item[(3)] Suppose that $\sigma$ is a wild automorphism of $X$ and $f: X \to Y$ (resp. $g: W \to X$ with $g({\rm Sing}\, (W)) \not= X$) is a $\sigma$-equivariant surjective morphism of projective
varieties. Then $f$ (resp. $g$) is a smooth morphism.

\item[(4)]
Suppose that $f : X \to Y$ is a $\sigma$-equivariant surjective morphism to a projective variety $Y$.
If the action of $\sigma$ on $X$ is wild then so is the action of $\sigma$ on $Y$ (and hence $Y$ is smooth).

\item[(5)]
Suppose that $f : X \to Y$ is a $\sigma$-equivariant generically finite surjective morphism of projective varieties.
Then the action of $\sigma$ on $X$ is wild if and only if so is the action of $\sigma$ on $Y$.
Further, if this is the case, then $f: X \to Y$ is a finite \'etale morphism, and in particular,
it is an isomorphism when $f$ is birational.

\item[(6)]
If $X$ is an abelian variety and $\sigma$ is wild then $\sigma$ has zero entropy.
\end{itemize}
\end{lemma}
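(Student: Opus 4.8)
The plan is to treat the six assertions by a sequence of elementary manipulations with invariant closed subsets, reserving the only genuine geometric input (generic smoothness in characteristic zero) for (3) and the only real difficulty for (6).

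For (1), I would observe that the singular locus $\Sing(X)$ is a proper closed subset preserved by any automorphism, so wildness forces it to be empty. For (2), if $\sigma^m$ is wild then every $\sigma$-invariant closed subset is $\sigma^m$-invariant, hence all of $X$; conversely, given a proper $\sigma^m$-invariant closed $Z$, the union $Z\cup\sigma(Z)\cup\dots\cup\sigma^{m-1}(Z)$ is $\sigma$-invariant, closed, nonempty and of dimension $\dim Z<\dim X$, contradicting wildness of $\sigma$. For (4), if $Z\subseteq Y$ is a nonempty $\sigma$-invariant closed subset then $f^{-1}(Z)$ is a nonempty $\sigma$-invariant closed subset of $X$, so $f^{-1}(Z)=X$ and hence $Z=Y$ by surjectivity; smoothness of $Y$ then follows from (1). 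Part (5) has the implication ``$X$ wild $\Rightarrow$ $Y$ wild'' as the special case (4); for the converse I would push a nonempty $\sigma$-invariant closed $Z\subseteq X$ forward to the closed (as $f$ is proper), $\sigma$-invariant set $f(Z)\subseteq Y$, conclude $f(Z)=Y$, and use generic finiteness ($\dim X=\dim Y$) together with irreducibility of $X$ to force $Z=X$.

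For (3) the key is generic smoothness over $\C$. Since $X$ is smooth by (1) and $Y$ is smooth by (4), the non-smooth locus $N\subseteq X$ of $f$ is closed and $\sigma$-invariant by equivariance; generic smoothness makes its image a proper closed subset of $Y$, so $N\ne X$, and wildness gives $N=\emptyset$; equidimensionality of the fibres together with miracle flatness then upgrades this to smoothness of $f$ as a morphism. For the variant $g\colon W\to X$ with $g(\Sing(W))\ne X$, I would first note that $g(\Sing(W))$ is a proper $\sigma$-invariant closed subset of $X$ (using properness of $g$ and $\sigma$-equivariance), hence empty by wildness, so $W$ is smooth; the previous argument then applies verbatim. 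The \'etale claim in (5) follows by combining (3) with generic finiteness: a smooth morphism of relative dimension zero is \'etale, a proper quasi-finite morphism is finite, and a finite \'etale birational morphism has degree one, hence is an isomorphism.

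The main obstacle is (6). Writing $\sigma=T_a\circ\sigma_0$ as a translation composed with the group automorphism $\sigma_0$ fixing the origin, the topological entropy of $\sigma$ equals that of $\sigma_0$, and is positive precisely when $\sigma_0$ acts on $H^1(X)$ with an eigenvalue of absolute value $>1$; by Kronecker's theorem the entropy vanishes exactly when $\sigma_0$ is quasi-unipotent (all eigenvalues roots of unity). So it suffices to exclude an eigenvalue of absolute value $>1$. I would let $A'\subseteq X$ be the maximal $\sigma_0$-invariant abelian subvariety on which $\sigma_0$ acts quasi-unipotently (well-defined since the sum of two such subvarieties is again one) and pass to the $\sigma$-equivariant quotient $D:=X/A'$. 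A maximality argument shows that the induced linear part on $D$ has \emph{no} root-of-unity eigenvalue: any such eigenvalue would produce, via a suitable power, a positive-dimensional fixed abelian subvariety of $D$ whose preimage in $X$ would strictly enlarge $A'$. If $\sigma_0$ were not quasi-unipotent then $\dim D>0$, the induced automorphism $\tau$ on $D$ would be wild by (4), yet $\tau^m$ would have a fixed point for every $m$ (since the linear part $\tau_0^m-\id$ has no eigenvalue $1$, hence is an isogeny), producing a finite nonempty $\tau$-invariant set and contradicting wildness on the positive-dimensional $D$. Hence $\sigma_0$ is quasi-unipotent and $\sigma$ has zero entropy. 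I expect the delicate points to be the reduction of entropy to the linear part and, especially, the verification that $D$ carries no root-of-unity eigenvalue; alternatively one may simply invoke the classification of wild automorphisms of abelian varieties in \cite[Theorem 0.2]{RRZ}.
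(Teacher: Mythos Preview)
Your arguments for (1)--(5) match the paper's almost exactly: the invariant singular locus for (1), the orbit-union trick for (2), pullback/pushforward of invariant closed sets for (4) and (5), and generic smoothness in characteristic zero for (3). There is one small slip in (3): for $g\colon W\to X$, after you establish that $W$ is smooth, the phrase ``the previous argument then applies verbatim'' is not quite right. Your argument for $f\colon X\to Y$ invoked wildness on the \emph{domain} $X$ to kill the non-smooth locus $N\subseteq X$; for $g$ the domain is $W$, and wildness is only given on the \emph{codomain} $X$. The repair is precisely what you already did for $\Sing(W)$: push the non-smooth locus of $g$ forward to $X$ via $g$ (closed by properness, $\sigma$-stable by equivariance, a proper subset of $X$ by generic smoothness), and then apply wildness on $X$ to conclude it is empty. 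This is exactly how the paper treats the $g$-case.

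For (6) the paper gives no argument at all and simply cites \cite[Theorem~0.2]{RRZ}, which is the alternative you mention in your last sentence. Your self-contained approach---quotienting by the maximal $\sigma_0$-invariant abelian subvariety $A'$ on which $\sigma_0$ is quasi-unipotent, arguing by maximality that the induced linear part on $D=X/A'$ has no root-of-unity eigenvalue, and then noting that $\tau_0-\id$ is an isogeny so the wild automorphism $\tau$ on the positive-dimensional $D$ would have a nonempty finite fixed-point set---is correct and amounts to reproving the relevant part of that theorem.
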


\begin{proof}
(1) Since $\Sing X$ is stabilized by every automorphism and
$\sigma$ acts as a wild automorphism on $X$, our $X$ is smooth.

For (2), if $\sigma^m$ stabilizes a Zariski-closed subset $Z$ of $X$,
then $\sigma$ stabilizes the closed subset $\cup_{i=0}^{m-1} \, \sigma^i(Z)$ of $X$. Hence (2) is true.

(3) Let $Y_1 \subseteq Y$ be the subset consisting of points $y \in Y$
such that the fibre $X_y = X \times_k k(y)$ is not smooth.
Let $Y_2 \subseteq Y$ be the subset consisting of points $y \in Y$
such that $f : X \to Y$ is not flat at $y$. Since $X$ is smooth,
both $Y_i$ (resp. their inverses in $X$) are Zariski-closed proper subsets of $Y$ (resp. $X$) and they are $\sigma$-stable.
Hence $Y_i = \emptyset$. So $f$ is smooth.
The case of $g$ is similar, by considering the subset of $X$ over which $g$ is non-flat or singular, each of which is a Zariski-closed subset of $W$ being different from $W$ by the generic flatness or by our additional assumption that $g({\rm Sing}\, (W)) \not= X$.

(4) and (5) are similar (and use (3)).

(6) follows from \cite[Theorem 0.2]{RRZ}.
\end{proof}

A smooth projective variety $V$ is a {\it Calabi Yau manifold in the strict sense} if
$V$ is simply conencted, $K_V \sim 0$ and $H^j(V, \OO_{V}) = 0$ ($0 < j < \dim V$).

\begin{proposition}\label{PropA} (cf.~\cite[Lemma 3.1, Corollary 4.2, Proposition 4.4, Remark 4.5]{RRZ})
Let $X$ be a projective variety of dimension $\ge 1$, with a wild automorphism $\sigma$. Then we have:
\begin{itemize}
\item[(1)]
Both the Euler Poincar\'e characteristic and the topological Euler number vanish: $\chi(\OO_X) = 0$, $e(X) = 0$.
In particular, $X$ is not rationally connected.
\item[(2)]
Let $L \in \Pic(X)$ such that $\sigma^*L = L$ in $\Pic(X)$. Then $|L| = \emptyset$ or $L = \OO_X$ in $\Pic(X)$. In particular, the Kodaira dimension $\kappa(X) \le 0$.
\item[(3)]
Suppose that $\kappa(X) = 0$. Then $K_X \sim_{\Q} 0$; the Beauville-Bogomolov (minimal split) finite \'etale cover
$\widetilde{X}$ of $X$ is a product of an abelian variety $A$ of dimension $\ge 0$
and a few copies of
Calabi Yau manifolds $C_i$ of odd dimension $\ge 3$ and in the strict sense; and a positive power of $\sigma$ lifts
to a diagonal action on $\widetilde{X} = A \times \prod_i \, C_i$ whose action on each factor is wild.
\end{itemize}
\end{proposition}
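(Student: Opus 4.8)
The plan is to exploit throughout the single basic observation that every positive power of $\sigma$ is again wild (Lemma \ref{fin}(2)) and hence, since $\dim X \ge 1$, acts without fixed points: a fixed point of $\sigma^m$ would be a nonempty proper $\sigma^m$-invariant closed subset. So all $\sigma^m$ ($m \ge 1$) are fixed-point-free, and $X$ is smooth by Lemma \ref{fin}(1). For (1) I would feed this into the two fixed point formulas. The topological Lefschetz formula gives $\sum_i (-1)^i \operatorname{tr}\bigl((\sigma^*)^m \mid H^i(X,\Q)\bigr) = 0$ for all $m \ge 1$. Writing $\{\lambda_j\}$, $\{\mu_k\}$ for the eigenvalues of $\sigma^*$ on the even- and odd-degree cohomology (all nonzero, as $\sigma^*$ is invertible), this says $\sum_j \lambda_j^m = \sum_k \mu_k^m$ for every $m \ge 1$; comparing the rational generating functions $\sum_j \lambda_j t/(1-\lambda_j t)$ and its $\mu$-analogue, whose poles recover the two multisets with multiplicity, forces $\{\lambda_j\} = \{\mu_k\}$, so in particular $e(X) = \dim H^{\mathrm{even}} - \dim H^{\mathrm{odd}} = 0$. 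The identical argument applied to the holomorphic Lefschetz (Atiyah--Bott) number $\sum_p (-1)^p \operatorname{tr}\bigl((\sigma^*)^m \mid H^p(X,\OO_X)\bigr)$, which also vanishes in the absence of fixed points, gives $\chi(\OO_X) = 0$. Since a rationally connected smooth projective variety has $h^p(\OO_X) = 0$ for $p > 0$ and hence $\chi(\OO_X) = 1 \ne 0$, $X$ is not rationally connected.

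For (2), suppose $|L| \ne \emptyset$. As $\sigma^* L \cong L$, the map $\sigma^*$ is a linear automorphism of the nonzero space $H^0(X,L)$, so it has an eigenvector $s$ with $\sigma^* s = c\,s$, $c \in \C^\times$. Then the effective divisor $D = \divv(s)$ satisfies $\sigma^{-1}(D) = \divv(\sigma^* s) = \divv(cs) = D$, so $\operatorname{Supp}(D)$ is a $\sigma$-invariant closed subset, hence empty or all of $X$; being a divisor it cannot equal $X$, so $D = 0$, $s$ is nowhere vanishing, and $L \cong \OO_X$. Applying this to $L = mK_X$, for which $\sigma^*(mK_X) \cong mK_X$ canonically, shows that each plurigenus either vanishes or forces $mK_X \sim 0$, whence $\kappa(X) \le 0$.

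For (3), if $\kappa(X) = 0$ then some $h^0(X, mK_X) \ne 0$, so by (2) we get $mK_X \sim 0$, i.e. $K_X \sim_{\Q} 0$. The Beauville--Bogomolov (minimal split) decomposition then gives a finite \'etale cover $\widetilde{X} = A \times \prod_j V_j \times \prod_k S_k$ with $A$ abelian, the $V_j$ strict Calabi--Yau, and the $S_k$ irreducible holomorphic symplectic (hyperk\"ahler). Because this cover and its factorization are canonical up to permuting isomorphic factors, a positive power of $\sigma$ lifts to $\widetilde{X}$ and, after a further power, preserves each factor and acts diagonally; the lift is wild by Lemma \ref{fin}(5), and by Lemma \ref{fin}(4) its restriction to each positive-dimensional factor is wild as well. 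Part (1) then forces $\chi(\OO_F) = 0$ for every such factor $F$. But $\chi(\OO_{S_k}) = \tfrac{1}{2}\dim S_k + 1 \ge 2$ for a hyperk\"ahler manifold, while $\chi(\OO_{V_j}) = 1 + (-1)^{\dim V_j}$ for a strict Calabi--Yau (using $h^p(\OO) = 0$ for $0 < p < \dim$ together with Serre duality), which vanishes exactly when $\dim V_j$ is odd. Hence there are no symplectic factors and every $V_j$ has odd dimension; since a strict Calabi--Yau of dimension $1$ does not exist and one of dimension $2$ is a K3 surface, already excluded as even-dimensional, each surviving factor $C_i$ has odd dimension $\ge 3$, yielding $\widetilde{X} = A \times \prod_i C_i$ with $\sigma$ diagonal and wild on each factor.

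The main obstacle is the lifting in (3): one must make precise that a positive power of $\sigma$ lifts to the minimal split cover and can be arranged to respect the product decomposition. This rests on the canonical nature of the splitting cover (it corresponds to a characteristic finite-index subgroup of $\pi_1(X^{\mathrm{an}})$, preserved by $\sigma_*$) and on the fact that $\Aut(\widetilde{X})$ permutes the isomorphic factors through a finite quotient, so that passing to a further power makes the action factorwise. By contrast, the fixed point formula inputs of (1) and the computations of $\chi(\OO)$ on the model factors are routine.
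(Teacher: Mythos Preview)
Your proof is correct and follows essentially the same approach as the paper. The paper defers (1) to \cite{RRZ}, whereas you spell out the Lefschetz fixed point argument explicitly (and in fact this matching-of-eigenvalue-multisets argument reappears later in the paper as Proposition \ref{EulerLefschetz}); for (2) the paper's fixed point on $|L|\simeq\PP^N$ and your eigenvector in $H^0(X,L)$ are the same argument in dual language; and for (3) both arguments are identical---Beauville--Bogomolov minimal split cover, lift a power of $\sigma$ diagonally, then kill the hyperk\"ahler and even-dimensional strict Calabi--Yau factors via $\chi(\OO)=0$. The only cosmetic difference is that in (3) the paper deduces $K_X\sim_{\Q}0$ by noting the unique effective $\Q$-divisor in $|K_X|_{\Q}$ is $\sigma$-stable, while you invoke (2) directly for $L=mK_X$; these are equivalent.
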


\begin{proof}
(1) See \cite[Proposition 4.4, Remark 4.5]{RRZ}.

If (2) is false, $|L| \simeq \PP^N$ with $N \ge 1$. It follows that the action of $\sigma$ on $|L|$ has a fixed point, say $D \in |L|$. Then $\sigma(D) = D$, contradicting the fact that $\sigma$ is wild.

(3) We have $K_X \sim_{\Q} D$ for some effective $\Q$-divisor (which is unique).
Thus $D$ is $\sigma$-stable. Hence $D = 0$, and $K_X \sim_{\Q} 0$.
Let $\widetilde{X} \to X$ be the Beauville-Bogomolov covering such that
$\widetilde{X}$ is the product of an abelian variety $A$, hyperk\"ahler manifolds $H_i$, and Calabi Yau manifolds $C_j$
in the strict sense. Replacing the cover $\widetilde{X}$ by the minimal splitting cover in \cite[\S 3]{Be}, we can lift $\sigma$ to an action on $\widetilde{X}$ so
that $\widetilde{X} \to X$ is $\sigma$-equivariant. Hence the action of $\sigma$ on
$\widetilde{X}$ is also wild; further,
$\sigma$ (replaced by a positive power)
splits as diagonal actions on the factors $A$,
$H_i$ and $C_j$
(cf.~\cite[\S 3]{Be}),
which are still wild by Lemma \ref{fin}, and hence these factors have
the vanishing Euler Poincare characteristic by (1).
Thus the $H_i$ does not appear, and the $C_j$ are Calabi Yau manifolds of odd dimension ($\ge 3$) in the strict sense.
This proves the proposition.
\end{proof}

The result below uses the standard results on subvarieties of a complex tori in \cite{Ue}.

\begin{lemma}\label{Tori} (cf.~\cite[Corollary 4.3, Proposition 5.1]{RRZ})
Let $X$ be a (smooth) projective variety with a wild automorphism $\sigma$, let
$A$ be an abelian variety
and let $f : X \to A$ be a $\sigma$-equivariant morphism.
Then the image $Y := f(X)$ is a subtorus of $A$ and $f : X \to Y$ is a smooth surjective morphism.
In particular, the albanese map $\alb_X : X \to A = \Alb(X)$
is a surjective smooth morphism with connected fibres.
\end{lemma}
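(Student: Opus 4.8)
The plan is to analyse the image $Y := f(X)$ by Ueno's structure theory for subvarieties of complex tori \cite{Ue}, and then feed the resulting geometry back into the various parts of Lemma \ref{fin}. Since $f$ is $\sigma$-equivariant, there is an automorphism $\sigma_A \in \Aut(A)$ with $f \circ \sigma = \sigma_A \circ f$; as $A$ is an abelian variety, $\sigma_A$ is affine, say $\sigma_A(x) = \psi(x) + c$ with $\psi$ a group automorphism of $A$ and $c \in A$. Because $X$ is irreducible and $f$ is proper, $Y$ is an irreducible $\sigma_A$-stable closed subvariety, and $f : X \to Y$ is a surjective $\sigma$-equivariant morphism; by Lemma \ref{fin}(4) the restriction $\sigma_A|_Y$ is then a wild automorphism of $Y$, and $Y$ is smooth.

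The heart of the argument is to show that $Y$ is a translate of a subtorus. Let $B \subseteq A$ be the identity component of the stabilizer $\{a \in A \mid a + Y = Y\}$, a subtorus of $A$. Affineness of $\sigma_A$ gives $\sigma_A(a + Y) = \psi(a) + Y$ for every $a$, so $\psi(B) = B$ and $\sigma_A$ descends to an affine automorphism $\bar\sigma$ of $A/B$ stabilising the image $W$ of $Y$. The quotient $Y \to W$ is surjective and $\sigma_A$-equivariant, so $\bar\sigma|_W$ is again wild by Lemma \ref{fin}(4). On the other hand, $W$ has trivial stabilizer by construction, hence is of general type by Ueno's theorem \cite{Ue}. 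If $\dim W \ge 1$, then $\Aut(W)$ is finite, whereas a wild automorphism has infinite order: a relation $\bar\sigma^m|_W = \id$ would fix every point of $W$, contradicting the wildness of $\bar\sigma^m|_W$ via Lemma \ref{fin}(2). Therefore $\dim W = 0$, i.e. $Y$ is a single coset $t + B$; in particular $Y$ is smooth and isomorphic to the abelian variety $B$. This paragraph is the main obstacle—combining Ueno's structure theorem with the finiteness of $\Aut$ of a general-type variety to exclude a positive-dimensional base $W$—while everything else is bookkeeping with Lemma \ref{fin}.

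With $Y$ now known to be a smooth projective translate of a subtorus, I would apply Lemma \ref{fin}(3) to the $\sigma$-equivariant surjection $f : X \to Y$ to conclude that $f$ is a smooth morphism. This settles the first two assertions, reading ``subtorus'' up to translation.

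For the albanese map, the coset $t + B = \alb_X(X)$ contains the origin under the usual normalisation, so $t \in B$ and $Y = B$ is an honest subtorus; since $\alb_X(X)$ generates $\Alb(X)$, we get $B = \Alb(X)$, so $\alb_X$ is surjective (and smooth by the previous paragraph). It remains to obtain connected fibres. I would take the Stein factorization $X \xrightarrow{h} Z \xrightarrow{g} \Alb(X)$, where $h$ has connected fibres and $g$ is finite; by uniqueness of the factorization $\sigma$ descends to an automorphism $\bar\sigma$ on $Z$, wild by Lemma \ref{fin}(4), with $g$ being $\sigma$-equivariant and generically finite. Lemma \ref{fin}(5) then makes $g$ finite \'etale, so $Z$ is an abelian variety. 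By the universal property of the albanese, $h = u \circ \alb_X$ for some $u : \Alb(X) \to Z$, whence $h = u \circ g \circ h$ and thus $u \circ g = \id_Z$ on the dense image of $h$; as $g$ is finite and admits this left inverse, it is an isomorphism. Therefore $\alb_X = h$ has connected fibres, as claimed.
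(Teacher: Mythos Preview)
Your proof is correct and follows the same route as the paper's: reduce via Ueno's structure theory to a general-type quotient $W=Y/B$, use a finiteness result to force $\dim W=0$, and then handle the albanese statement by Stein factorisation together with Lemma~\ref{fin}(5). The only variation is in the finiteness step---you push wildness down to $W$ and invoke finiteness of $\Aut(W)$ for a smooth general-type variety, whereas the paper invokes finiteness of the pluricanonical representation \cite[Theorem 14.10]{Ue} to bound the (a priori only birational) induced action on $Y/B$; the two are interchangeable here.
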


\begin{proof}
It is known that every subvariety of an abelian variety, like $Y = f(X)$ of $A$,
has Kodaira dimension $\kappa(Y) \ge 0$ and equality holds if and only if
$Y$ is a subtorus (after choosing a new origin for $A$); further,
if the inequality is strict then there is a subtorus $B \subset A$ acting on $Y$ such that
$Y \to Y/B$ is birational to the Iitaka fibration, see \cite[Theorem 10.9, Corollary 10.12]{Ue}.
Now our $\sigma$ acts on $Y$ and descends to a birational automorphism $\sigma|_{Y/B}$ on the
base of this Iitaka fibration. By the finiteness of the pluricanonical representation \cite[Theorem 14.10]{Ue}, $\sigma|_{Y/B}$ is of finite order. Hence a sum of finitely many fibres of $X \to Y/B$
is $\sigma$-stable.
Since $\sigma$ is wild, this sum equals $X$ itself. Hence $Y/B$ is a point,
so $Y$ equals $B$, a subtorus of $A$.
The surjective morphism $X \to Y = f(X)$ is smooth by Lemma \ref{fin}.

For the final assertion, note that
$\alb_X : X \to \Alb(X)$ is $\Aut(X)$- and hence $\sigma$-equivariant,
and $\Alb(X)$ is generated by the image $\alb_X(X)$ (an abelian variety),
so $\alb_X(X) = \Alb (X)$.
Let $g : X \to W$ be the Stein factorization of $\alb_X$.
Since $\alb_X$ is $\sigma$-equivariant, so is $g$.
By Lemma \ref{fin}, the induced morphisms $\sigma|_{W}$ and $\sigma|_{\Alb(X)}$ are both wild,
$W$ is smooth,
and the induced morphism $X \to W$ is smooth while $W \to \Alb(X)$ is \'etale.
Hence $W$ is also an abelian variety.
In fact, the surjective \'etale morphism $W \to \Alb (X)$ is an isomorphism by the universality of the albanese morphism. This proves the lemma.
\end{proof}

A {\it maximal rationally connected ($\MRC$) fibration} on a normal projective variety has general fibres $F$ rationally connected in the sense of
Campana and Koll\'ar-Miyaoka-Mori (i.e., every two points on $F$ are connected by
an irreducible rational curve) and it is `maximal' among such (rational) fibrations. See eg. \cite[Definition 5.3, Theorem 5.4]{Ko}.

\begin{lemma}\label{MRC}
Let $X$ be a (smooth) projective variety of positive dimension, with a wild automorphism $\sigma$.
Suppose that $X$ is uniruled. Then we can choose the maximal rationally connected ($\MRC$) fibration
$X \to Y$ to be a well defined $\sigma$-equivariant surjective smooth morphism
with $0 < \dim Y < \dim X$.
Further, the action of $\sigma$ on $Y$ is also wild.
\end{lemma}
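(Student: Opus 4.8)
The plan is to combine the canonicity of the maximal rationally connected fibration with Nakayama's Chow reduction to single out one \emph{specific} model on which $\sigma$ acts by an honest automorphism, and then to use wildness to promote the resulting rational fibration to an everywhere-defined morphism. Before anything else I would record the two dimension bounds. Since $X$ is uniruled, the general fibre of any $\MRC$ fibration is positive-dimensional, so $\dim Y < \dim X$; and since a wild automorphism forces $X$ to be non-rationally-connected by Proposition \ref{PropA}(1), the $\MRC$ base cannot be a point, so $\dim Y > 0$. Thus $0 < \dim Y < \dim X$ for whatever model I end up constructing.

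The heart of the argument is producing a canonical base $Y$ carrying a genuine $\sigma$-action. I would start from the $\MRC$ fibration as a rational map $\phi_0 \colon X \ratmap Z$, whose general fibre is a rationally connected subvariety and whose base is not uniruled (Graber--Harris--Starr). Sending a general point $x$ to the cycle class of the closure of the $\MRC$ fibre through $x$ gives a rational map $X \ratmap \Chow(X)$; let $Y$ be the normalization of the closure of its image and $\phi \colon X \ratmap Y$ the induced fibration (the Chow reduction, after \cite{Na}). Because the assignment ``the $\MRC$ fibre cycle through $x$'' is intrinsic to $X$ and the $\MRC$ fibration is unique up to birational equivalence, $\sigma$ carries $\MRC$ fibre cycles to $\MRC$ fibre cycles; hence $\sigma$ acts on $\Chow(X)$ preserving the relevant image locus and induces a genuine automorphism $\sigma_Y \in \Aut(Y)$ with $\phi \circ \sigma = \sigma_Y \circ \phi$. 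Here $Y$ is birational to $Z$, hence still non-uniruled, and the fibres of $\phi$ are the $\MRC$ fibres, so $\phi$ is a legitimate choice of $\MRC$ fibration.

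Next I would upgrade $\phi$ to a morphism using wildness. Since $\sigma_Y$ is an automorphism, comparing the maximal domains of definition of the two equal rational maps $\phi \circ \sigma$ and $\sigma_Y \circ \phi$ gives $\sigma^{-1}(\operatorname{dom}\phi) = \operatorname{dom}\phi$, so the indeterminacy locus satisfies $\sigma(\operatorname{Ind}(\phi)) = \operatorname{Ind}(\phi)$. As $\phi$ is dominant, $\operatorname{Ind}(\phi)$ is a proper closed subset of $X$, so wildness of $\sigma$ forces $\operatorname{Ind}(\phi) = \emptyset$; thus $\phi \colon X \to Y$ is a surjective morphism. Now Lemma \ref{fin}(4), applied to the $\sigma$-equivariant surjection $\phi$, shows that $\sigma$ is wild on $Y$ and that $Y$ is smooth, while Lemma \ref{fin}(3) shows that $\phi$ is smooth. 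Together with the dimension bounds above, this yields every assertion of the lemma.

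The main obstacle is the middle step: pinning down a \emph{single} model $Y$ on which $\sigma$ acts by an honest automorphism, rather than merely compatibly up to birational equivalence of the base. This canonicity is exactly what Nakayama's Chow reduction of the $\MRC$ fibration provides, and it is the only genuinely nontrivial input. Everything afterwards is formal, driven by the rigidity that wildness imposes on the indeterminacy locus and by the already-established smoothness properties in Lemma \ref{fin}.
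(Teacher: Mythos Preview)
Your argument is correct and follows the paper's approach: both invoke Nakayama's Chow reduction of the $\MRC$ fibration to obtain a canonical base on which $\sigma$ descends to an honest automorphism, and then use wildness together with Lemma~\ref{fin} to conclude. The only cosmetic difference is in how the rational map is promoted to a morphism: the paper passes to the graph $\Gamma \to X$ and uses Lemma~\ref{fin}(5) to see it is an isomorphism, whereas you argue directly that the indeterminacy locus is $\sigma$-stable and hence empty---both routes are equally short and rely on the same underlying wildness mechanism.
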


\begin{proof}
By Nakayama \cite[Proposition 4.14 or Theorem 4.18]{Na},
we can choose $\MRC$  $X \dashrightarrow Y$ to be a (unique) special Chow reduction
with the graph $\Gamma = \Gamma_{X/Y}$ equi-dimensional over $Y$.
Especially, $\sigma$ on $X$ descends to an automorphism $\sigma_Y$ on $Y$.
The natural birational map $\Gamma \to X$ is $\sigma$-equivariant, and hence it is an isomorphism;
see Lemma \ref{fin}. Thus we may assume that $X = \Gamma \to Y$ is a well-defined surjective morphism.
Hence it is smooth by the same Lemma \ref{fin}. Since $X$ is not rationally connected by
Proposition \ref{PropA}, $Y$ is not a point. Since $X$ is uniruled, $\dim Y < \dim X$.
The action of $\sigma$ on $Y$ is wild by Lemma \ref{fin}. This proves the lemma.
\end{proof}

The following conjecture is known in dimension up to three (cf.~\cite[\S 3.13]{KM} and the references therein).

\begin{conjecture}\label{WAC}
(Weak abundance conjecture) A projective variety is uniruled (i.e., covered by rational curves)
if and only if it has negative Kodaira dimension.
\end{conjecture}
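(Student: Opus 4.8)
The plan is to handle the two implications separately, as they are of very different depth. The forward implication, that a uniruled variety has negative Kodaira dimension, is classical and unconditional. If $X$ is uniruled, there is a variety $Z$ with $\dim Z = \dim X - 1$ and a dominant rational map $\PP^1 \times Z \dashrightarrow X$; after passing to a resolution, any section of $mK_X$ pulls back to a pluricanonical form on (a smooth model of) $\PP^1 \times Z$, and its restriction to a general ruling $\PP^1 \times \{z\}$ lands in $H^0(\PP^1, \OO(mK_{\PP^1})) = 0$ for $m \ge 1$. Hence the form vanishes along every ruling and so vanishes identically, giving $H^0(X, mK_X) = 0$ for all $m \ge 1$, i.e.\ $\kappa(X) = -\infty$.

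The substance lies in the converse: $\kappa(X) = -\infty$ should force $X$ to be uniruled. I would route this through the pseudoeffectivity criterion, namely that a smooth projective variety is uniruled if and only if $K_X$ is not pseudoeffective. Granting this criterion, it suffices to show that $\kappa(X) = -\infty$ implies $K_X$ is not pseudoeffective. One half of the needed equivalence is automatic, since an effective class is pseudoeffective; the reverse implication, that pseudoeffectivity of $K_X$ already yields $\kappa(X) \ge 0$, is precisely an abundance-type statement. For $\dim X \le 3$ the argument can be completed via the minimal model program: $X$ is birational either to a minimal model $X'$ with $K_{X'}$ nef, or to a Mori fibre space. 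In the first case the abundance theorem for threefolds gives $\kappa(X) = \kappa(X') = \nu(K_{X'}) \ge 0$, contradicting $\kappa(X) = -\infty$; hence the program terminates in a Mori fibre space, whose general fibres are Fano and therefore rationally connected, so $X$ is uniruled (uniruledness being a birational invariant).

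The hard part is exactly the input required for the converse beyond dimension three. The pseudoeffectivity criterion is available unconditionally, but the implication ``$K_X$ pseudoeffective $\Rightarrow \kappa(X) \ge 0$'' is the abundance conjecture, and the reduction to a Mori fibre space presupposes the existence and termination of the minimal model program. All of these are theorems when $\dim X \le 3$ (Mori's program together with the Miyaoka--Kawamata abundance theorem), which is why the statement is unconditional in that range; in higher dimensions it remains open precisely because abundance and termination are open. Thus, for the purposes of this paper, I would invoke the conjecture only for $\dim X \le 3$, where it is a theorem.
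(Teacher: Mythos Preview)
This statement is labelled \emph{Conjecture} in the paper and is not proved there; the paper simply records that it is known in dimension $\le 3$ and cites \cite[\S 3.13]{KM}. So there is no ``paper's own proof'' to compare against. Your write-up is not a proof of the conjecture either, and you are explicit about this: you correctly separate the classical implication (uniruled $\Rightarrow \kappa = -\infty$) from the converse, and you correctly locate the converse inside the MMP plus abundance. Your sketch for $\dim X \le 3$ --- run the MMP, terminate either in a minimal model (then abundance gives $\kappa \ge 0$) or in a Mori fibre space (then Fano fibres give uniruledness) --- is exactly the content behind the citation to \cite{KM}, and your assessment that the statement remains open in higher dimension because abundance is open is accurate. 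In short, your proposal matches both the truth status of the statement and the way the paper uses it: as a theorem when $\dim X \le 3$ (invoked in Proposition~\ref{red}) and as a standing hypothesis otherwise.
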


\begin{proposition}\label{red}
Let $X$ be a (smooth) projective variety of dimension $\ge 1$,
with a wild automorphism $\sigma$.
Assume either $\dim X \le 3$ or the weak abundance conjecture in dimension $\le n$.
Then we can choose the maximal rationally connected ($\MRC$) fibration
to be a $\sigma$-equivariant surjective
smooth morphism $f : X \to Y$ (with
every fibre a rationally connected variety of dimension $\ge 0$); $\dim Y > 0$;
the action of $\sigma$ on $Y$ is wild
(and hence $Y$ is smooth); $K_Y \sim_{\Q} 0$; and the Beauville-Bogomolov
finite \'etale cover of $Y$ is a product of an abelian variety of dimension $\ge 0$
and a few copies of Calabi-Yau manifolds of odd dimension $\ge 3$ in the strict sense.
\end{proposition}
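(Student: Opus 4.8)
The plan is to dichotomize on the Kodaira dimension of $X$. By Proposition \ref{PropA}(2) we have $\kappa(X) \le 0$, so $\kappa(X) \in \{-\infty, 0\}$, and the two cases will be handled separately, the second being where all the work lies.

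First I would dispose of the case $\kappa(X) = 0$. Here $X$ is not uniruled, since a uniruled variety has $\kappa = -\infty$ unconditionally; consequently the maximal rationally connected fibration of $X$ is trivial, and we may simply take $Y = X$ and $f = \id_X$. Its (set-theoretic) fibres are single points, hence rationally connected of dimension $0$, so the fibre condition is met with the bound ``$\ge 0$'', and $\dim Y = \dim X \ge 1 > 0$. Every remaining assertion about $Y$ is now literally a statement about $X$ itself and is supplied directly by Proposition \ref{PropA}(3): $K_X \sim_{\Q} 0$, and the Beauville--Bogomolov cover of $X$ is a product of an abelian variety and Calabi--Yau manifolds of odd dimension $\ge 3$ in the strict sense.

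The substantive case is $\kappa(X) = -\infty$. Here I would invoke the weak abundance conjecture (available unconditionally when $\dim X \le 3$, and otherwise assumed) to conclude that $X$ is uniruled. Lemma \ref{MRC} then furnishes a $\sigma$-equivariant surjective \emph{smooth} $\MRC$ fibration $f : X \to Y$ with $0 < \dim Y < \dim X$, whose general (and in fact every) fibre is rationally connected of positive dimension, and on whose base $\sigma$ still acts wildly; by Lemma \ref{fin} this forces $Y$ to be smooth. The crux is then to identify $Y$, and the key claim is that $\kappa(Y) = 0$. On one hand, since $\sigma$ acts wildly on $Y$, Proposition \ref{PropA}(2) gives $\kappa(Y) \le 0$. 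On the other hand, the base of a maximal rationally connected fibration is itself non-uniruled (Graber--Harris--Starr; see \cite{Ko} and the references therein), so applying weak abundance to $Y$---legitimate precisely because $\dim Y < \dim X$ keeps us inside the range where abundance is available---yields $\kappa(Y) \ge 0$. Combining the two inequalities gives $\kappa(Y) = 0$.

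Once $\kappa(Y) = 0$ is in hand with $\sigma$ wild on $Y$, the proof concludes by feeding $Y$ into Proposition \ref{PropA}(3), which delivers $K_Y \sim_{\Q} 0$ together with the desired description of the Beauville--Bogomolov finite \'etale cover of $Y$ as a product of an abelian variety of dimension $\ge 0$ and copies of Calabi--Yau manifolds of odd dimension $\ge 3$ in the strict sense. I expect the main obstacle, and the only essential use of the abundance hypothesis, to be the equality $\kappa(Y) = 0$: the inequality $\kappa(Y) \le 0$ is automatic from wildness, but the reverse inequality $\kappa(Y) \ge 0$ depends on pairing the non-uniruledness of the $\MRC$ base with weak abundance in the lower dimension $\dim Y$. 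Everything else is a bookkeeping assembly of Lemma \ref{fin}, Lemma \ref{MRC} and Proposition \ref{PropA}.
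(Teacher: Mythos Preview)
Your proof is correct and follows essentially the same route as the paper: split on $\kappa(X)\in\{0,-\infty\}$, handle $\kappa(X)=0$ directly via Proposition~\ref{PropA}(3), and in the uniruled case combine Lemma~\ref{MRC} with Graber--Harris--Starr plus weak abundance to get $\kappa(Y)=0$, then apply Proposition~\ref{PropA}(3) to $Y$. The one small point you gloss over is that Lemma~\ref{MRC} only gives rationally connected \emph{general} fibres; the paper upgrades this to \emph{every} fibre by noting that the locus in $Y$ over which the fibre fails to be rationally connected is a $\sigma$-stable proper closed subset, hence empty by wildness of $\sigma|_Y$.
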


\begin{proof}
By Proposition \ref{PropA}, we have the Kodaira dimension $\kappa(X) \le 0$.
If $\kappa(X) = 0$, the proposition follows from Proposition \ref{PropA}.

Thus we may assume that $\kappa(X) < 0$.
Then $X$ is uniruled, by our assumption.
Using Lemma \ref{MRC}, we can choose the maximal rationally connected ($\MRC$) fibration
$X \to Y$ to be a well defined $\sigma$-equivariant surjective smooth (non-trivial) morphism.
By Lemma \ref{fin} the induced action $\sigma |_Y$ on $Y$ of $\sigma$ is also wild.
Thus every fibre of $X \to Y$ is rationally connected, since $\sigma |_Y$ fixes the locus on $Y$
(a non-dense proper subset of $Y$) over which the fibres are not rationally connected.
$Y$ is not uniruled by \cite[Corollary 1.4]{GHS}.
Hence the Kodaira dimension $\kappa(Y) \ge 0$ by
the assumed weak abundance conjecture. Now
the proposition follows from Proposition \ref{PropA}.
\end{proof}

\begin{lemma}\label{red2}
Let $X$ be a (smooth) projective variety with a wild automorphism $\sigma$, let $A$ be
an abelian variety and let
$f : X \to A$ be a $\sigma$-equivariant surjective morphism
with connected fibres of positive dimension. Suppose that a positive power $\sigma^s$ of
$\sigma$ fixes the class of a big divisor $D$ on $A$ in $\NS(A)_{\R} = \NS(A) \otimes_{\Z} \R$ (this holds if $\dim A = 1$ or a positive power of $\sigma$
is a translation on $A$).
Then $-K_F$ is not a big divisor, where $F$ is a general fibre of $f$.
\end{lemma}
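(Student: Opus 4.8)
The plan is to argue by contradiction: assume $-K_F$ is big, deduce that $X$ must be an abelian variety, and then observe that an abelian variety dominating $A$ with positive-dimensional connected fibres cannot have $-K_F$ big. For the set-up, note that since $\sigma$ is wild and $f$ is $\sigma$-equivariant and surjective, Lemma \ref{fin}(3) makes $f$ a smooth morphism, so every fibre $F$ is smooth. As $A$ is an abelian variety, $K_A \sim 0$, so the relative canonical divisor satisfies $K_{X/A} = K_X - f^*K_A = K_X$ and restricts on each fibre to $K_F$; equivalently $-K_X|_F = -K_F$. Thus the hypothesis says exactly that $-K_X$ is big on the fibres of $f$.

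The key step is to prove that the class $\xi := -K_X + N f^*D \in \NS_{\R}(X)$ is big for $N \gg 0$. This is an instance of the standard fact that a divisor which is big on the fibres of a fibration, plus a large multiple of the pullback of a big divisor from the base, is big on the total space. Concretely, the pushforwards $f_*\OO_X(-mK_X)$ are nonzero torsion-free sheaves on $A$ whose generic rank equals $h^0(F,-mK_F)$ and hence grows like $m^{\dim F}$ because $-K_F$ is big; twisting by $\OO_A(mND)$, where $D$ is big on $A$, and applying asymptotic Riemann-Roch on $A$ produces about $m^{\dim F}\cdot m^{\dim A} = m^{\dim X}$ independent sections of $\OO_X(m\xi)$, so $\vol_X(\xi) > 0$.

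Next I would verify that $[\xi]$ is numerically $\sigma$-invariant. The canonical class is preserved by every automorphism, so $\sigma^*[-K_X] = [-K_X]$. The $\sigma$-equivariance $f\circ\sigma = (\sigma|_A)\circ f$ gives $\sigma^*f^*D = f^*((\sigma|_A)^*D)$, and since $\sigma$ fixes $[D]$ in $\NS_{\R}(A)$ by hypothesis, we get $[\sigma^*f^*D] = [f^*D]$ in $\NS_{\R}(X)$; hence $\sigma^*[\xi] = [\xi]$. Thus $\sigma$ fixes the big class $[\xi]$, and Corollary \ref{FLc} forces $X$ to be an abelian variety. But then, after composing with a translation, the surjection $f : X \to A$ is a homomorphism of abelian varieties, so each fibre $F$ is a translate of the connected kernel and is itself an abelian variety; consequently $-K_F \sim 0$, which is not big as $\dim F > 0$. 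This contradiction proves that $-K_F$ is not big.

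I would expect the main obstacle to be the positivity step of the second paragraph: verifying carefully that $\xi = -K_X + Nf^*D$ becomes big once $N$ is large, i.e., controlling the growth of $h^0(X, m\xi)$ through the relative pushforward sheaves. The remaining steps — smoothness of $f$, the numerical invariance of $[\xi]$, and the final structural argument for abelian varieties — are direct consequences of Lemma \ref{fin}, Corollary \ref{FLc}, and the rigidity of morphisms between abelian varieties.
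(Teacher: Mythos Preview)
Your proposal is correct and follows essentially the same route as the paper: assume $-K_F$ big, note $f$ is smooth, form the big $\sigma$-invariant class $-K_X + Nf^*D$, apply Corollary~\ref{FLc} to force $X$ abelian, and derive $K_F=0$ for the contradiction. For the step you correctly flag as the main obstacle---bigness of $-K_X + Nf^*D$---the paper simply cites \cite[Lemma 2.5]{CCP} rather than sketching the pushforward/Riemann--Roch argument.
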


\begin{proof}
By Lemma \ref{fin}, $f$ is a smooth morphism.
Suppose the contrary that
$-K_F$ is a big divisor. Then $-K_X$ is relatively big over $A$.
Thus $E := -K_X + mf^*D$ is a big divisor for some $m >> 1$, see (1) in the proof of \cite[Lemma 2.5]{CCP}. Now $\sigma^s$ fixes the big class $[E]$. Thus, by Corollary \ref{FLc},
$X$ is an abelian variety and hence $K_F = K_X|_{F} = 0$, a contradiction.
This proves the lemma.
Indeed, for the claim in the bracket of the statement of the lemma, if $\dim A = 1$,
then $\sigma^{12}$ is a translation on $A$;
if a positive power $\sigma^t$ of $\sigma$ is a translation on $A$, then it fixes
every ample divisor class on $A$.
\end{proof}

The lemma below was proved in \cite{Ki}.

\begin{lemma} (cf. \cite[Lemma 2.8]{Ki}) \label{KiLem}
Let $\sigma$ be a wild automorphism of an abelian variety $A$. Let $H \le \Aut_{\variety}(A)$
be a finite subgroup centralized by $\sigma$: $\sigma h = h \sigma$ for all $h \in H$.
Then $H$ consists of translations of $A$.
\end{lemma}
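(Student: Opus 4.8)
The plan is to exploit the semidirect-product structure $\Aut_{\variety}(A) = A \rtimes \Aut_{\group}(A)$, where $\Aut_{\group}(A)$ consists of the automorphisms fixing the origin (the group automorphisms) and $A$ acts by translations $t_a$, together with one elementary observation that I will call the \emph{fixed-point principle}: if $\tau$ is a wild automorphism of a positive-dimensional projective variety $Z$ and $g \in \Aut(Z)$ commutes with $\tau$ and has a fixed point, then $g = \id$. Indeed, $\operatorname{Fix}(g)$ is closed, and $g(p) = p$ forces $g(\tau p) = \tau g(p) = \tau p$, so $\tau(\operatorname{Fix}(g)) \subseteq \operatorname{Fix}(g)$; applying the same to $\tau^{-1}$, which also commutes with $g$, gives $\tau(\operatorname{Fix}(g)) = \operatorname{Fix}(g)$, whence by wildness $\operatorname{Fix}(g) = Z$ and $g = \id$.

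Write $\sigma = t_b \circ s$ and, for $h \in H$, write $h = t_c \circ \phi$, where $s, \phi \in \Aut_{\group}(A)$ are the linear parts and $b, c \in A$. Since taking the linear part is a homomorphism and $H$ is finite, each $\phi = \phi_h$ has finite order and hence acts semisimply on $H_1(A, \Q)$; moreover $\sigma h = h\sigma$ forces $s\phi = \phi s$. The goal is to show $\phi = \id$ for every $h$, which is precisely the assertion that $h$ is a translation. Suppose not, and fix $h$ with $\phi \ne \id$. Set $M := \Ker(\phi - \id)^0$, the identity component of the fixed subgroup of $\phi$; it is a subtorus with $M \ne A$. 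As $s$ commutes with $\phi$, it preserves $\Ker(\phi - \id)$ and hence $M$, so $s(M) = M$. This compatibility (which is all that is required, rather than $\sigma(M) = M$) ensures that $\sigma$ and $h$ both descend to the positive-dimensional abelian variety $A/M$, giving a $\sigma$-equivariant surjection $A \to A/M$; by Lemma \ref{fin}(4) the descended $\bar\sigma$ is again wild.

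The reason for quotienting by the \emph{fixed} subtorus $M$, rather than by $\Imm(\phi - \id)$, is that on $H_1(A/M, \Q) = H_1(A, \Q)/\Ker(\phi - \id)$ the automorphism $\phi$ acts with no eigenvalue $1$, by semisimplicity. Hence the descended linear part $\bar\phi$ has no eigenvalue $1$, so $\bar\phi - \id$ is an isogeny of $A/M$; consequently $(\bar\phi - \id)(x) = -\bar c$ is solvable and $\bar h = t_{\bar c}\circ \bar\phi$ has a fixed point. On the other hand $\bar\phi \ne \id$, since a nontrivial quotient on which it has no eigenvalue $1$ cannot carry the identity, so $\bar h \ne \id$. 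Applying the fixed-point principle to $\bar h$ and the wild $\bar\sigma$ on $A/M$ yields $\bar h = \id$, a contradiction. Therefore $\phi_h = \id$ for all $h \in H$, i.e.\ $H$ consists of translations.

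I expect the only genuinely delicate point to be the choice of the subtorus to quotient by. A direct attempt to endow $h$ itself with a fixed point, or to pass to $A/\Imm(\phi - \id)$, entangles the eigenvalue-$1$ behaviour of the linear part $s$ of $\sigma$ with that of $\phi$ and need not succeed; passing instead to $A/M$ with $M = \Ker(\phi - \id)^0$ removes exactly the eigenvalue-$1$ part of $\phi$ while, crucially, keeping the descended $\sigma$ wild, after which the fixed-point principle closes the argument at once. The two supporting inputs are Lemma \ref{fin}(4), guaranteeing that $\bar\sigma$ remains wild, and the semisimplicity of the finite-order $\phi$, which identifies $H_1(M, \Q)$ with the eigenvalue-$1$ eigenspace.
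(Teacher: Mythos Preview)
Your argument is correct. The paper does not supply its own proof of this lemma but simply cites \cite[Lemma~2.8]{Ki}, so there is no in-paper argument to compare against; your approach---quotienting by the subtorus $M=\Ker(\phi-\id)^0$ so that the descended linear part $\bar\phi$ has no eigenvalue~$1$, hence $\bar\phi-\id$ is an isogeny and $\bar h$ acquires a fixed point, then invoking the fixed-point principle for the wild $\bar\sigma$ on $A/M$ via Lemma~\ref{fin}(4)---is clean and self-contained.
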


A projective variety $X$ is called a $\Q$-{\it torus} if it has an abelian variety $A_1$ as
an \'etale finite cover (or equivalently it is the quotient of an abelian variety $A_2$ by a finite group acting freely on $A_2$).

\begin{proposition}\label{Qtorus}
Let $X$ be a projective $\Q$-torus with a wild automorphism $\sigma$.
Then $X$ is an abelian variety.
\end{proposition}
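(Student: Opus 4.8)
The plan is to reduce the statement to Lemma \ref{KiLem} by lifting $\sigma$ equivariantly to an abelian variety lying finite \'etale over $X$. First I would make the covering Galois: starting from a finite \'etale cover $A_1 \to X$ with $A_1$ an abelian variety, I pass to the Galois closure $\pi : A \to X$. Since $A$ dominates $A_1$ by a finite \'etale map and a connected finite \'etale cover of an abelian variety is again an abelian variety (it is a complex torus, and projective since the pullback of an ample class is ample), $A$ is an abelian variety, and $G := \Gal(A/X)$ is a finite group acting freely on $A$ with $X = A/G$.

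The main point is to arrange that $\pi$ is $\sigma$-equivariant. Write $\Pi := \pi_1(X^{\mathrm{an}})$ and let $N \trianglelefteq \Pi$ be the finite-index normal subgroup corresponding to the cover $A$. The automorphism $\sigma$ induces an automorphism $\sigma_{\#}$ of $\Pi$, well defined on normal subgroups since it is only ambiguous up to inner automorphisms, and $\sigma$ lifts to the cover attached to a normal subgroup exactly when that subgroup is $\sigma_{\#}$-invariant. Because a finitely generated group has only finitely many subgroups of any given finite index, the family $\{\sigma_{\#}^{n}(N)\}_{n \in \Z}$ is finite, so $N' := \bigcap_{n \in \Z} \sigma_{\#}^{n}(N)$ is a finite-index normal subgroup with $\sigma_{\#}(N') = N'$. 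Replacing $A$ by the cover attached to $N'$, which is still an abelian variety (finite \'etale over $A$) and still Galois over $X$, I may assume that $\sigma$ lifts to $\tilde{\sigma} \in \Aut(A)$ with $\pi \tilde{\sigma} = \sigma \pi$. I expect this equivariant-lifting step to be the only genuine obstacle; the surrounding arguments are formal.

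With the lift at hand the rest is quick. As $\pi$ is finite \'etale and $\tilde{\sigma}$-equivariant, Lemma \ref{fin}(5) shows that $\tilde{\sigma}$ is wild on $A$. Since $\tilde{\sigma}$ descends to $A/G = X$, it normalizes the deck group $G$, and because $G$ is finite some power $\tilde{\sigma}^{m}$ centralizes $G$; this $\tilde{\sigma}^{m}$ is again wild by Lemma \ref{fin}(2). Now Lemma \ref{KiLem}, applied to the wild automorphism $\tilde{\sigma}^{m}$ of the abelian variety $A$ and the finite subgroup $G \le \Aut_{\variety}(A)$ that it centralizes, forces $G$ to consist of translations of $A$. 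Hence $G$ is a finite subgroup $T \subseteq A$, and $X = A/G = A/T$ is the target of an isogeny out of the abelian variety $A$, so $X$ is itself an abelian variety, as desired.
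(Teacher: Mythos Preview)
Your proof is correct and follows the same overall strategy as the paper: lift $\sigma$ equivariantly to an abelian \'etale cover, pass to a power that centralizes the Galois group, and invoke Lemma~\ref{KiLem}. The difference lies in how the equivariant cover is produced and how the conclusion is drawn. The paper appeals to Beauville's \emph{minimal splitting cover} $A \to X$ from \cite[\S 3]{Be}, which is canonical and hence automatically $\sigma$-equivariant; once $G = \Gal(A/X)$ is shown to consist of translations, the minimality of this cover forces $G = \{\id\}$, so $X = A$. You instead build the equivariant cover by hand via a $\pi_1$ argument (intersecting the finitely many $\sigma_\#$-translates of a normal subgroup of fixed finite index), and then finish by observing that a quotient of an abelian variety by a finite group of translations is the target of an isogeny, hence abelian. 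Your route is more elementary in that it avoids the Beauville machinery, at the cost of the explicit fundamental-group bookkeeping; the paper's route is shorter but leans on the nontrivial existence and uniqueness of the minimal splitting cover.
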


\begin{proof}
Let $A \to X$ be the minimal splitting cover of $X$ in \cite[\S 3]{Be}. Then $\sigma$ lifts to an automorphism on $A$, also denoted as $\sigma$.
Note that
the $\sigma$ on $A$ normalizes $H := \Gal(A/X)$.
Hence $\sigma^{r!}$ centralizes every element of $H$, where $r := |H|$.
Since $\sigma^{r!}$ is still wild by Lemma \ref{fin}, $H$ consists of translations by Lemma \ref{KiLem}. Hence $H = \{id_A\}$ by the minimality of $A \to X$.
Therefore $X = A$ and $X$ is an abelian variety.
\end{proof}

\begin{lemma}\label{kernew}
Let $X$ be a smooth projective variety, $C$ a smooth irreducible projective curve, and
$f: X \to C$ a smooth morphism.
Let $\eta$ be the generic point of $C$ and $X_{\eta} = f^{-1}(\eta)$ the generic fibre.
Suppose $\Pic(X_{\eta}) \simeq \NS (X_{\eta})$ (this is the case when $\Pic^0(X_{\eta}) = 0$, eg. when the geometric generic fibre $X_{\overline{\eta}}$ is rationally connected).
Then $\NS(X)|_{X_{\eta}} = \NS(X_{\eta})$
and the kernel of the surjection
$$
\NS(X)_{\Q}\to \NS(X)_{\Q}|_{X_{\eta}} = \NS(X_{\eta})_{\Q}$$
is spanned by $\Q [F]$, where $[F]$ is the class of closed fibres of $f$.
\end{lemma}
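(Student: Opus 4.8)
The plan is to derive both assertions from the excision (localization) exact sequence for divisor class groups relative to the base curve $C$, invoking the hypothesis $\Pic(X_{\eta})\simeq\NS(X_{\eta})$ only at the one point where it is genuinely needed. First I would set up that sequence. For a nonempty open $U\subseteq C$ put $V_U=f^{-1}(U)$; its complement in $X$ is the union of the finitely many fibres $X_p$ over $p\in C\setminus U$. Since $X$ is smooth, $\Pic(X)=\operatorname{Cl}(X)$, and excision says $\operatorname{Cl}(X)\to\operatorname{Cl}(V_U)$ is surjective with kernel generated by the classes of the irreducible components of those $X_p$. As $X_{\eta}=\varprojlim_U V_U$ and $\Pic$ commutes with this filtered limit of opens, passing to $\varinjlim_U$ produces a right-exact sequence
\[
\Lambda\ \xrightarrow{\ \alpha\ }\ \Pic(X)\ \xrightarrow{\ \rho\ }\ \Pic(X_{\eta})\ \to\ 0,
\]
where $\Lambda$ is the free abelian group on the irreducible components of all closed fibres, $\alpha$ sends a component to its divisor class, and $\rho$ is restriction to the generic fibre. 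Thus $\rho$ is surjective and $\ker\rho=\operatorname{im}\alpha$ is precisely the subgroup of fibral classes.

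For the first assertion I would observe that $\rho$ descends along $\Pic(X)\to\NS(X)$: restriction respects numerical equivalence, and a class numerically trivial on $X_{\eta}$ is trivial in $\Pic(X_{\eta})$ exactly by the hypothesis $\Pic(X_{\eta})\simeq\NS(X_{\eta})$. Hence the restriction $\NS(X)\to\NS(X_{\eta})=\Pic(X_{\eta})$ is well defined, and it is onto because $\rho$ is; this is precisely the statement $\NS(X)|_{X_{\eta}}=\NS(X_{\eta})$, as an equality of groups.

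For the kernel, note first that $[F]=f^*[\mathrm{pt}]$ restricts to $0$ on $X_{\eta}$ (a fibre over a closed point is disjoint from the generic fibre), so $\Q[F]\subseteq\ker$. Conversely, if $[L]\in\NS_{\Q}(X)$ restricts to $0$, then after clearing denominators a representative $L\in\Pic(X)$ has $L|_{X_{\eta}}$ numerically trivial, hence trivial in $\Pic(X_{\eta})$ by hypothesis; so $\rho(L)=0$ and $L\in\operatorname{im}\alpha$ by exactness. Therefore $[L]$ lies in the $\Q$-span of the fibral component classes, and it only remains to identify this span with $\Q[F]$. The hardest point is exactly this identification when the fibres are reducible or disconnected, and I would handle it via the Stein factorization $f=h\circ g$, where $g\colon X\to C'$ has connected fibres and $h\colon C'\to C$ is finite; since $f$ is smooth and proper, $h$ is \'etale, $C'$ is a smooth connected projective curve, and $g$ is smooth. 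Then each $X_p=\bigsqcup_{q\in h^{-1}(p)}g^{-1}(q)$ is a disjoint union of irreducible $g$-fibres $g^{-1}(q)=g^*\OO_{C'}(q)$, all algebraically equivalent to one class $[F_g]$ because $C'$ is connected, while $[F]=(\deg h)\,[F_g]$. Hence every fibral component equals $(\deg h)^{-1}[F]$ in $\NS_{\Q}(X)$, their span is $\Q[F]$, and the kernel is exactly $\Q[F]$.

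I expect the genuine content to be concentrated in two places: the single use of $\Pic(X_{\eta})\simeq\NS(X_{\eta})$ to promote numerical triviality on the generic fibre to linear triviality, which is what forces a kernel class to be fibral, and the final identification of all fibral components with $\Q[F]$ through the Stein factorization. By contrast, the excision sequence and the passage to $\varinjlim_U$ are routine once phrased correctly.
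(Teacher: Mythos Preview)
Your argument is correct and takes a genuinely different route from the paper's. You use the localization exact sequence for divisor class groups to identify the kernel of $\Pic(X)\to\Pic(X_{\eta})$ as the subgroup generated by irreducible fibral components, and then Stein factorization to show that all such components are algebraically equivalent to a rational multiple of $[F]$. The paper instead argues cohomologically: given $L$ with $L|_{X_\eta}\cong\OO_{X_\eta}$, upper-semicontinuity forces $L|_{X_y}\cong\OO_{X_y}$ for every closed $y$, whence Grauert's theorem gives $L\cong f^*N$ with $N:=f_*L$ invertible, so the kernel in $\Pic(X)$ is exactly $f^*\Pic(C)$. Your approach is more elementary (no semicontinuity or Grauert) and your Stein-factorization step explicitly covers the case of disconnected fibres, whereas the paper's line $h^0(X_\eta,\OO_{X_\eta})=1$ tacitly assumes a geometrically connected generic fibre (harmless in the paper's intended application, where fibres are rational surfaces). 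Conversely, the paper's method yields the sharper integral statement $\ker\big(\Pic(X)\to\Pic(X_\eta)\big)=f^*\Pic(C)$ before tensoring with $\Q$. One small quibble: you write that restriction respects \emph{numerical} equivalence, but $\NS$ is defined via algebraic equivalence, and it is the latter that is manifestly preserved under restriction; since the two differ only by torsion this is harmless over $\Q$.
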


\begin{proof}
Note that algebraic equivalence is preserved under morphisms (hence under restrictions). Thus $\NS(X)|_{X_{\eta}} \subseteq \NS(X_{\eta})$. Let $D$ be a prime divisor on $X_{\eta}$ and let $\overline{D}$ be the Zariski closure of $D$ in $X$. Then $\overline{D}$ is a divisor on $X$ and $D = {\overline{D}}|_{X_{\eta}}$ as divisors. This implies $\NS(X)|_{X_{\eta}} = \NS(X_{\eta})$.

Let $L, M \in \Pic (X)$ such that $L|_{X_{\eta}} \cong M|_{X_{\eta}}$
in $\Pic (X_{\eta})$. It suffices to show that $L = M \otimes f^*N$ for some
$N \in \Pic (C)$.

Since $L|_{X_{\eta}} \cong M|_{X_{\eta}}$, it follows that
$$h^0(X_{\eta}, (L \otimes M^{-1})|_{X_{\eta}}) = 1\,\, ,\,\, h^0(X_{\eta}, {(L \otimes M^{-1})^{-1}}|_{X_{\eta}}) =1\,\, .$$
Then by the upper-semicontinuity theorem,
$$h^0(X_{y}, (L \otimes M^{-1})|_{X_{y}}) \ge 1\,\, ,\,\, h^0(X_{y}, {(L \otimes M^{-1})^{-1}}|_{X_{y}}) \ge 1\,\,$$
for all closed fibres $X_y$. Since $X_y$ is smooth projective, it follows that
$(L \times M^{-1})|_{X_{y}} \simeq \OO_{X_y}$ and therefore $h^0(X_{y},(L \times M^{-1})|_{X_{y}}) = 1$
for all closed fibres $X_y$. Then, since the morphism $f$ is smooth, hence flat, the sheaf $N := f_*(L \otimes M^{-1})$ is an invertible sheaf with base change property $f_*(L \otimes M^{-1}) \otimes \kappa(y) \simeq H^0(X_{y}, (L \times M^{-1})|_{X_{y}})$ by Grauert's theorem.
Since $(L \otimes M^{-1})|_{{X_y}} \simeq \OO_{X_y}$ is globally generated, by Nakayama's lemma, the natural morphism
$$f^*N := f^*f_*  (L \otimes M^{-1}) \to L \otimes M^{-1}$$
is then surjective. Since both sides are invertible sheaves, it follows that $f^*N \simeq L \otimes M^{-1}$, i.e., $L = M \otimes f^*N$ as desired.
\end{proof}

\begin{proposition}\label{irregularity}
Let $X$ be a projective variety  over $\C$ of $\dim X = n$ with a wild automorphism $\sigma$.
Then the albanese morphism $\alb_X : X \to \Alb (X)$ is a smooth surjective morphism with connected fibres and hence $0 \le q(X) = \dim \Alb (X) \le n$.
If $q(X) = n$, then $X = \Alb(X)$, an abelian variety;
if $q(X) \ge n-1$, then $\sigma$ has zero entropy.
\end{proposition}

\begin{proof}
By Lemma \ref{Tori}, $\alb_X : X \to \Alb (X)$ is a smooth surjective morphism with connected fibres.
Assume that $q(X) = n-1$. Then, since the fibre of $\alb_X$ is one dimensional,
$$d_1(\sigma|_X) = d_1(\sigma|_{\Alb (X)}) = 1$$
by the product formula in \cite{DNT} and the fact that $\sigma|_{\Alb (X)}$ is a wild automorphism of an abelian variety and hence has zero entropy (cf.~Lemma \ref{fin}).
The rest of the proposition is clear.
\end{proof}

\begin{proposition}\label{EulerLefschetz}
Let $X$ be a projective variety over $\C$. Suppose $\sigma$ is a wild automorphism of $X$. Put $$A := \oplus_{k \ge 0} \, H^{2k+1}(X, \Z)/(\torsion), \hskip 1pc
B := \oplus_{k \ge 0} \, H^{2k}(X, \Z)/(\torsion).$$
Then the set of complex eigenvalues of $\sigma^*|_{A}$ and the set of complex eigenvalues of $\sigma^*|_{B}$ coincide counted with multiplicities.
\end{proposition}

\begin{proof}
By Lemma \ref{fin}, $X$ is smooth.
By Proposition \ref{PropA}, the topological Euler number $e(X) = 0$. Thus $A$ and $B$ have the same rank, say $r > 0$.  Let $\{a_i \}_{i=1}^{r}$ be the set of complex eigenvalues of $\sigma^*$ on $A$
counted with multiplicities and $\{b_i\}_{i=1}^{r}$ be the set of
 complex eigenvalues of $\sigma^*$ on $B$ counted with multiplicities.

Since $\sigma^n$ is wild, the topological Lefschetz number of $\sigma^n$ is $0$, i.e.,
$$\sum_i a_i^n = \sum_i b_i^n$$
for all $n$, in particular, for all $n=1$, $2$, ... , $r$.
By Newton's identities, this implies that the elementary symmetric polynomials in $a_{1}, \ldots, a_{r}$ coincide with the elementary symmetric polynomials in $b_{1}, \ldots, b_{r}$.
Hence the monic polynomial with roots $\{a_i \}_{i=1}^{r}$ counted with multiplicities and the one with roots $\{b_i \}_{i=1}^{r}$ counted with multiplicities
coincide. Thus $\{a_i \}_{i=1}^{r} = \{b_i\}_{i=1}^{r}$ counted with multiplicities.
\end{proof}

\section{Preliminary results in lower dimensions}

Theorem \ref{RRZth} below was proved in \cite[\S 6]{RRZ}. Our Lemma \ref{red2}
(and Proposition \ref{FL} implicitly)
helps skipping the detailed analysis of ruled surfaces; compare with the proof of \cite[Lemma 6.2]{RRZ}.

\begin{theorem}\label{RRZth} (cf.~\cite[\S 6]{RRZ})
Let $X$ be a projective variety with a wild automorphism $\sigma$.
Suppose that $\dim X \le 2$. Then $X$ is an abelian variety.
\end{theorem}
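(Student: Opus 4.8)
The plan is to stratify by $\dim X$ and, in dimension two, by the Kodaira dimension $\kappa(X)$, arranging matters so that the only genuinely geometric case — that of a ruled surface over an elliptic base — is dispatched by Lemma \ref{red2} rather than by a hands-on classification of ruled surfaces. At the outset, Lemma \ref{fin}(1) shows $X$ is smooth, and the case $\dim X = 0$ is trivial, a point being a zero-dimensional abelian variety.

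For $\dim X = 1$, I would invoke Proposition \ref{PropA}(1), which gives the vanishing of the topological Euler number $e(X) = 0$. For a smooth projective curve of genus $g$ this reads $2 - 2g = 0$, so $g = 1$ and $X$ is an elliptic curve, i.e.\ a one-dimensional abelian variety. (Equivalently, $\chi(\OO_X) = 1 - g = 0$ forces $g = 1$.)

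For $\dim X = 2$, Proposition \ref{PropA}(2) gives $\kappa(X) \le 0$, and I treat the two cases separately. If $\kappa(X) = 0$, then Proposition \ref{PropA}(3) produces a Beauville--Bogomolov finite \'etale cover $\widetilde{X}$ that is a product of an abelian variety and Calabi--Yau manifolds of odd dimension $\ge 3$ in the strict sense. Since $\dim X = 2$ leaves no room for any such odd-dimensional factor, $\widetilde{X}$ is an abelian surface, so $X$ is a $\Q$-torus; Proposition \ref{Qtorus} then promotes this to $X$ being an abelian variety. (This is precisely the step that excludes the genuinely bielliptic surfaces, which are $\Q$-tori with $\chi(\OO_X) = 0$ but are not themselves abelian.)

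The remaining case $\kappa(X) = -\infty$ is, I expect, the crux. Here $X$ is uniruled by the weak abundance conjecture (Conjecture \ref{WAC}), which is classical for surfaces. Applying Lemma \ref{MRC} (or, more directly, Proposition \ref{red}), I would realize the MRC fibration as a $\sigma$-equivariant smooth surjective morphism $f : X \to Y$ with $0 < \dim Y < 2$, whence $\dim Y = 1$ and every fibre $F$ is a one-dimensional rationally connected variety, i.e.\ $F \cong \PP^1$. By Lemma \ref{fin} the induced automorphism $\sigma|_Y$ is again wild, so the curve case already settled shows $Y$ is an elliptic curve (alternatively, Proposition \ref{red} gives $K_Y \sim_{\Q} 0$, forcing the same on a curve). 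Now I would apply Lemma \ref{red2} with $A = Y$: since $\dim A = 1$ its hypothesis is automatic, and its conclusion is that $-K_F$ is not big. But $F \cong \PP^1$ has $-K_F = \OO_{\PP^1}(2)$ ample, hence big — a contradiction. Therefore $\kappa(X) = -\infty$ cannot occur, and combining all cases yields that $X$ is an abelian variety. The decisive point is that Lemma \ref{red2}, which rests on the Fujiki--Lieberman-type Proposition \ref{FL} through Corollary \ref{FLc}, entirely replaces any case-by-case analysis of ruled surfaces.
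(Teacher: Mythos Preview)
Your argument is correct and follows essentially the same route as the paper's proof: both reduce, via Proposition \ref{red} (which packages Proposition \ref{PropA} and Lemma \ref{MRC}) together with Proposition \ref{Qtorus}, to the single geometric case of a smooth $\PP^1$-fibration over an elliptic curve, and both kill that case by Lemma \ref{red2}. The only difference is presentational: you stratify explicitly by $\dim X$ and $\kappa(X)$, whereas the paper argues by contradiction and invokes Proposition \ref{red} and Proposition \ref{Qtorus} in one breath to arrive immediately at the ruled-surface case.
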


\begin{proof}
Assume to the contrary that $X$ is not an abelian variety. Then by Proposition \ref{red} and Proposition \ref{Qtorus}, it follows that $\dim X = 2$ and $X$ admits a smooth fibration $f : X \to Y$ with fibres $F$ smooth rational curve and $Y$ an elliptic curve.
But then a general fibre $F$ of $f$ satisfies
$F \cong \PP^1$ has ample $-K_F$, contradicting Lemma \ref{red2}.
\end{proof}

Theorem \ref{ThA} for threefolds will follow from Proposition \ref{PropB} below and Propositions \ref{Prop_q2}, \ref{Prop_q1} and \ref{PropC} in subsequent sections.

\begin{proposition}\label{PropB}
Let $X$ be a projective variety of dimension three with a wild automorphism $\sigma$.
Suppose that $X$ is neither an abelian variety, nor a Calabi-Yau manifold.
Then $X$ is uniruled and the maximal rationally connected ($\MRC$) fibration
can be chosen to be a $\sigma$-equivariant surjective smooth morphism $f : X \to Y$
and coincides with the albanese map $\alb_X: X \to \Alb(X) = \Alb(Y)$,
such that $\sigma$ acts on $Y$ as a wild automorphism and
one of the following cases occurs.
\begin{itemize}
\item[(1)]
The irregularity $q(X) = 1$. In this case, $Y$ is an elliptic curve, $\sigma$ acts on $Y$ as a translation of infinite order and every closed fibre $X_y$ over $y \in Y$ is a smooth rational surface.
\item[(2)]
$q(X) = 2$. In this case, $Y$ is an abelian surface, $\sigma$ acts on $Y$ as a wild automorphism, and every closed fibre $X_y$ over $y \in Y$ is a smooth rational curve: $X_y \cong \PP^1$.
\end{itemize}
\end{proposition}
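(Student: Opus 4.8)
The plan is to pin down the Kodaira dimension, produce the fibration, identify it with the albanese map, and read off the two cases from the dimension of the base.

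\emph{Step 1: $X$ is uniruled.} By Proposition \ref{PropA}(2) we have $\kappa(X) \le 0$, so I would first rule out $\kappa(X) = 0$. In that case Proposition \ref{PropA}(3) gives $K_X \sim_{\Q} 0$ and a Beauville--Bogomolov finite \'etale cover $\widetilde{X} = A \times \prod_i C_i$, with $A$ abelian and each $C_i$ a strict-sense Calabi--Yau manifold of odd dimension $\ge 3$. Since $\dim X = 3$, either there is no factor $C_i$, so $\widetilde{X} = A$ is a $3$-dimensional abelian variety and $X$ is a $\Q$-torus, whence $X$ is an abelian variety by Proposition \ref{Qtorus}; or there is exactly one $C_i$, of dimension $3$, with $A$ a point, so $\widetilde{X}$ is a simply connected strict-sense Calabi--Yau threefold, which forces $\OO(K_X)$ torsion and $\pi_1(X^{\mathrm{an}})$ finite, i.e. $X$ is a Calabi--Yau manifold. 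Both outcomes are excluded by hypothesis, so $\kappa(X) = -\infty$; since the weak abundance conjecture holds in dimension $\le 3$ (Conjecture \ref{WAC}), $X$ is uniruled.

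\emph{Step 2: the $\MRC$ fibration and the abelianness of $Y$.} I would then apply Proposition \ref{red} (through Lemma \ref{MRC}) to realize the $\MRC$ fibration as a $\sigma$-equivariant surjective smooth morphism $f : X \to Y$ with rationally connected fibres, $0 < \dim Y < \dim X = 3$, with $\sigma$ acting on $Y$ as a wild automorphism, $K_Y \sim_{\Q} 0$, and a Beauville--Bogomolov cover of $Y$ of the same product shape. Because $\dim Y \le 2$ is strictly smaller than the dimension $\ge 3$ of any Calabi--Yau factor, no such factor can occur, so that cover is an abelian variety and $Y$ is a $\Q$-torus. Proposition \ref{Qtorus} applied to $(Y, \sigma|_Y)$ then shows that $Y$ is an abelian variety, of dimension $1$ or $2$.

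\emph{Step 3: $f$ is the albanese map.} By Lemma \ref{Tori} the albanese morphism $\alb_X : X \to A := \Alb(X)$ is surjective and smooth with connected fibres. Since $Y$ is abelian, the universal property of the albanese factors $f$ as $f = h \circ \alb_X$ for a surjective homomorphism $h : A \to Y$ (after fixing origins). A fibre $F$ of $f$ is rationally connected, hence $\Alb(F) = 0$ and every morphism from $F$ to an abelian variety is constant; as $\alb_X$ maps $F$ onto a translate of $\ker h$, this forces $\dim \ker h = 0$, so $h$ is an isogeny. Finally, each fibre of $f$ equals $\alb_X^{-1}(h^{-1}(y))$, a disjoint union of $\deg h$ fibres of $\alb_X$; since the fibres of both $f$ and $\alb_X$ are connected, $\deg h = 1$ and $h$ is an isomorphism. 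Thus $\Alb(X) = Y = \Alb(Y)$ and $f = \alb_X$, and in particular $q(X) = \dim Y$.

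\emph{Step 4: the two cases and the main difficulty.} It then remains to split on $q(X) = \dim Y \in \{1,2\}$. If $q(X) = 1$, then $Y$ is an elliptic curve and, by \cite[Theorem 0.2]{RRZ}, the wild automorphism $\sigma|_Y$ is a translation of infinite order, while each fibre is a smooth (by smoothness of $f$) rationally connected surface, i.e. a smooth rational surface; this is case (1). If $q(X) = 2$, then $Y$ is an abelian surface and each fibre is a smooth rationally connected curve, i.e. $\PP^1$; this is case (2). I expect the point requiring the most care to be Step 3, the identification of the $\MRC$ fibration with the albanese map: the two inputs that make it work are that rationally connected fibres have trivial albanese (killing $\dim \ker h$) and that both fibrations have connected fibres (killing $\deg h$), with Lemma \ref{Tori} supplying exactly the smoothness and connectedness of $\alb_X$ that the argument needs.
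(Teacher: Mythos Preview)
Your proof is correct and follows essentially the same route as the paper's: invoke Proposition~\ref{red} and Proposition~\ref{Qtorus} to get $X$ uniruled with MRC base $Y$ an abelian variety of dimension $1$ or $2$, then identify $f$ with $\alb_X$ by factoring through the albanese and using that rationally connected fibres map to points in abelian varieties. The only cosmetic difference is in Step~3: the paper observes directly that $g = h$ has connected fibres (since $f$ does) and is finite (since $\alb_X(X_y)$ is a point), hence an isomorphism by normality; you instead split this into ``$\dim\ker h = 0$'' followed by a separate $\deg h = 1$ argument from connectedness, which is slightly redundant (your surjectivity of $\alb_X|_F$ onto $h^{-1}(y)$ already gives $h^{-1}(y)$ a single point) but not wrong.
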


\begin{proof}
Since $X$ is neither an abelian threefold, nor a Calabi-Yau threefold, by virtue of Proposition \ref{red} and Proposition \ref{Qtorus}, X is uniruled and the $\MRC$ $f: X \to Y$ is a $\sigma$-equivariant surjective
smooth morphism with $Y$ an abelian variety of dimension $1$ or $2$
and every fibre $F$ a smooth rational variety of dimension $2$ or $1$, respectively.
Here we use the fact that
a rationally connected variety of dimension at most two is a rational variety.

Since $Y = \Alb(Y)$, there is a morphism $g : \Alb(X) \to Y$ such that $g \circ \alb_X = f$ by the universal property of the albanese map $\alb_X$. Since $f$ is surjective with connected fibres, so is $g$. Let $X_y$ ($y \in Y$) be any closed fibre of $f$. Then $\alb_X(X_y)$ is a point for each $X_y$, because $X_y$ is rationally connected and any abelian variety contains no rational curves.
Thus $g$ is a finite morphism (with connected fibres), so it is an isomorphism by the normality (and indeed smoothness) of $\Alb(X)$ and $Y$.
This proves the proposition.
\end{proof}

The next result is certainly known at least over $\C$ (cf.~\cite[Table 3.1]{Sa}). For the sake of completeness and its own interest, we shall give a proof which is vailid over any algebraically closed field of characteristic $\not= 2$.

\begin{proposition}\label{EllipticRuled}
Let $\pi : S \to E$ be a relatively minimal elliptic ruled surface over an algebraically closed field of characteristic $\not= 2$. Then the Iitaka $D$-dimension $\kappa(S, -K_S) \ge 0$.
\end{proposition}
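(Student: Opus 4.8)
The plan is to translate the statement into a question about sections of a symmetric power on the elliptic base and then run through Atiyah's classification of rank-two bundles. Write $\pi : S = \PP(\mathcal{E}) \to E$ with $\mathcal{E}$ a rank-two bundle on the elliptic curve $E$; relative minimality is exactly the condition that $S$ is a $\PP^1$-bundle. Since $\omega_E \cong \OO_E$, the relative canonical bundle formula gives $-K_S \cong \OO_{\PP(\mathcal{E})}(2) \otimes \pi^*(\det \mathcal{E})^{-1}$, and hence by the projection formula
\[
H^0(S, -mK_S) \cong H^0\big(E,\, \Sym^{2m}\mathcal{E} \otimes (\det\mathcal{E})^{-m}\big)
\]
for every $m \ge 1$. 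First I would record that the bundle $\Sym^{2m}\mathcal{E}\otimes(\det\mathcal{E})^{-m}$ is unchanged if $\mathcal{E}$ is twisted by a line bundle (reflecting that $-K_S$ depends only on $S$), so $\mathcal{E}$ may be normalised freely, and that it has degree $0$ on $E$ for all $m$. Thus $\kappa(S,-K_S)\ge 0$ reduces to producing a single $m\ge 1$ with a nonzero section of this degree-zero bundle.

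Next I would dispose of the easy cases of Atiyah's classification. If $\mathcal{E}$ is decomposable, then after a twist $\mathcal{E}\cong\OO_E\oplus L$, and $\Sym^{2m}\mathcal{E}\otimes(\det\mathcal{E})^{-m}\cong\bigoplus_{i=0}^{2m}L^{\,i-m}$ contains the summand $L^0=\OO_E$ (the term $i=m$); hence already $h^0(-K_S)\ge 1$. If $\mathcal{E}$ is indecomposable of even degree, then up to twist it is the Atiyah extension $0\to\OO_E\to\mathcal{E}\to\OO_E\to 0$, whose trivial sub-line-bundle induces $\OO_E\hookrightarrow\Sym^{2}\mathcal{E}$, so again $h^0(-K_S)\ge 1$. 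In both families the conclusion is immediate and insensitive to the characteristic.

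The whole content therefore lies in the remaining case, where $\mathcal{E}$ is indecomposable of odd degree, i.e.\ stable; this is the main obstacle. Here the anticanonical system itself fails: in characteristic $\ne 2$ one has $\mathcal{E}\otimes\mathcal{E}=\Sym^2\mathcal{E}\oplus\det\mathcal{E}$, so $\Sym^2\mathcal{E}\otimes(\det\mathcal{E})^{-1}\cong\operatorname{End}^0(\mathcal{E})$ is the trace-free endomorphism bundle, and since a stable bundle is simple ($\operatorname{End}(\mathcal{E})=k$) it has no sections, giving $h^0(-K_S)=0$. To reach a multiple I would pass to an \'etale double cover: a stable rank-two bundle of odd degree on $E$ is a direct image $q_*N$ under a connected \'etale double cover $q:\widetilde E\to E$ (equivalently, a classical computation going back to Atiyah gives $\operatorname{End}^0(\mathcal{E})\cong\bigoplus_{\tau}L_\tau$, the sum of the three nontrivial $2$-torsion line bundles). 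For the Galois involution $\iota$ of such a cover one has $q^*q_*N\cong N\oplus\iota^*N$, so $q^*\mathcal{E}$ is decomposable and $\widetilde S:=\PP(q^*\mathcal{E})$ is a decomposable elliptic ruled surface over the elliptic curve $\widetilde E$; by the previous paragraph $h^0(-K_{\widetilde S})\ge 1$. The induced map $\phi:\widetilde S\to S$ is finite \'etale, so $\phi^*K_S=K_{\widetilde S}$ and $\phi^*(-mK_S)=-mK_{\widetilde S}$; since the Iitaka dimension is preserved under finite surjective pullback, $\kappa(S,-K_S)=\kappa(\widetilde S,-K_{\widetilde S})\ge 0$.

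This odd/stable case is precisely where the hypothesis $\operatorname{char}\ne 2$ is indispensable: it guarantees the three nontrivial $2$-torsion classes of $E$ and the corresponding connected \'etale double covers, as well as the splitting $\mathcal{E}\otimes\mathcal{E}=\Sym^2\mathcal{E}\oplus\det\mathcal{E}$ used to identify $\operatorname{End}^0(\mathcal{E})$. In characteristic zero one could instead argue directly: using $\operatorname{End}^0(\mathcal{E})\cong\bigoplus_{\tau}L_\tau$, the Clebsch--Gordan splitting isolates $\Sym^4\mathcal{E}\otimes(\det\mathcal{E})^{-2}$ as a summand of $\operatorname{End}^0(\mathcal{E})^{\otimes2}$, and counting sections ($h^0(\operatorname{End}^0(\mathcal{E})^{\otimes2})=3$ against $h^0(\operatorname{End}^0(\mathcal{E}))=0$ and $h^0(\OO_E)=1$) yields $h^0(-2K_S)=2>0$; but since Clebsch--Gordan may fail in positive characteristic, I would present the covering argument as the primary route.
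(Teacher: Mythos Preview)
Your proof is correct and follows the same canonical-bundle/Atiyah-classification setup as the paper through the decomposable and indecomposable even-degree cases, but your treatment of the indecomposable odd-degree case is genuinely different. The paper handles that case by an explicit construction: it builds the quotient $(\PP^1\times E)/G$ by a free action of $G\cong(\Z/2)^2$ using the full $2$-torsion of $E$, observes that the resulting surface carries an elliptic fibration over $\PP^1$ with exactly three multiple fibres of type $_2I_0$, reads off $-2K_S\sim F$ from the canonical bundle formula for elliptic surfaces, and then verifies (via $(C_0^2)>0$) that this surface really is the unique indecomposable degree-$1$ elliptic ruled surface over $E$. You instead pull back along a single \'etale double cover $q:\widetilde E\to E$ determined by one nontrivial $2$-torsion class, so that $q^*\mathcal E\cong N\oplus\iota^*N$ splits and $\widetilde S=\PP(q^*\mathcal E)$ falls into the already-treated decomposable case; invariance of Iitaka dimension under finite \'etale pullback finishes it. Your route is shorter and sidesteps both the explicit quotient model and the uniqueness check, while the paper's approach yields extra geometric information (the elliptic pencil on $S$ and the precise value $\kappa(S,-K_S)=1$ in this case) that your reduction does not directly produce. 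Both arguments invoke the $2$-torsion of $E$---and hence the hypothesis on the characteristic---in essential but different ways.
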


\begin{proof}
We write $S = \BPP(\SE) = \Proj(\oplus_{n \ge 0} {\rm Sym}^n(\SE))$ where $\SE$ is a normalized rank $2$ locally free sheaf on the elliptic curve $E$. Then we have an exact sequence
$$0 \to \SO_E \to \SE \to \SSL \to 0$$
for some invertible sheaf $\SSL$ on $E$. Let $C_0$ be the section corresponding to this surjection. Then, with respect to $\SO_{S}(1)$ defined by $\SE$, one has
$\SO_{S}(1) = \SO_{S}(C_0)$ and
$$-K_S = \SO_{S}(2) - \pi^* {\rm det} \, \SE = \SO(2C_0) - \pi^* {\det} \, \SE
= \SO(2C_0) - \pi^* \SSL$$
in $\Pic (S)$ by the canonical bundle formula.

If $\SE$ is decomposable, then ${\rm deg} \SSL \le 0$ and therefore $\SE =\SO_E \oplus \SSL$. Corresponding to the projection $\SE \to \SO_E$, we have a section $C_1$ such that
$\SO_S(C_1) = \SO_{\BPP(\SE)}(1) - \pi^* {\rm det} \, \SE$
in $\Pic (S)$. Hence $-K_S$ is linearly equivalent to an effective divisor $C_0 + C_1$.

If $\SE$ is indecomposable of degree $0$, then $\SE$ is the non-trivial extension of the form
$$0 \to \SO_E \to \SE \to \SO_E \to 0$$
and therefore
$-K_S \cong \SO_S(2)$. Hence $-K_S$ is linearly equivalent to the effective divisor $2C_0$,
where $C_0$ is the section given by the exact sequence above.

It remains to consider the case where $\SE$ is indecomposable of degree $1$.
Our proof below is inspired by \cite{Su}.

Consider the product surface $\BPP^1 \times E$ and its automorphism subgroup $G$ generated by $\tau_1$ and $\tau_2$ defined by
$$\tau_1 : (t, s) \mapsto (-t, s+a), \,\,\,\, \tau_2 : (t, s) \mapsto (t^{-1}, s+b)$$
where $\langle a, b \rangle$ is the group of two torsion points of $E$.
Then $G$ acts freely on $\BPP^1 \times E$ and we have a smooth projective surface $S = (\BPP^1 \times E)/G$. The two projections on $\BPP^1 \times E$ induce on $S$ a relatively minimal elliptic fibre space structure $p_1 : S \to \BPP^1 = \BPP^1/G$ and a relatively minimal elliptic ruled surface structure $p_2 : S \to E \simeq E/G$.

We are going to show that $\kappa(S, -K_S) \ge 0$ and
$p_2 : S \to E \simeq E/G$ is the unique minimal elliptic ruled surface over $E$ corresponding to an indecomposable locally free sheaf of rank $2$ and degree $1$.

Observe that $p_1$ has exactly three multiple fibres of type $_{2}I_{0}$. Hence, by the canonical bundle formula for an elliptic surface applied for $p_1$, it follows that $-2K_S$ is linearly equivalent to $F$, any smooth fibre of $p_1$. In particular, $\kappa(S, -K_S) \ge 0$.

Now it suffices to show that $p_2 : S \to E$ is a ruled surface corresponding to an indecomposable locally free shaef $\SE$ of rank $2$ and degree $1$. Recall that such a minimal elliptic ruled surface is unique from $E$ and does not depend on the choice of such sheaf $\SE$, and this is also the unique case so that $(C_0^2) > 0$,
where $C_0$ is the section of $p_2$ as defined at the beginning of this proof.

Observe that ${\rm deg}\, {p_2} |_{F_t} = 2$ or $4$ for any fibre $F_t$ of $p_1$ with reduced structure. Then the section $C_0$ of $p_2$ is an elliptic curve, but not in the fibre of $p_1$ (whose multiplicity is at most $2$).
Thus $(F.C_0) > 0$ and hence $(C_0^2) > 0$ by
$$0 = {\rm deg} \, K_{C_0} = ((K_S + C_0).C_0)
= -(F.C_0)/2 + (C_0^2)\,\,.$$
This proves the result.
\end{proof}

\begin{lemma} \label{EllipticRuled2}
Let $\pi: S \to E$ be a relatively minimal ruled surface over an elliptic curve $E$ defined over an algebraically closed field of characteristic $\not= 2$.
Then $\kappa(S, -K_S) \ge 0$ and
one of the following cases occurs.
\begin{itemize}
\item[(1)]
$\kappa(S, -K_S) = 2$. In this case for all $t >>1$ and sufficiently divisible, the base locus of $|{-}tK_S|$ has a $1$-dimensional irreducible component.
\item[(2)]
$\kappa(S, -K_S) = 1$. In this case,
$|{-}tK_S|$ is base point free for some $t \ge 1$ and it defines a relatively minimal elliptic fibration $S \to \PP^1$.
\item[(3)]
$\kappa(S, -K_S) = 0$. In this case, there is an integer $m>0$ such that $|{-}mK_S| = \{D\}$ for a non-zero effective divisor $D$ on $S$.
\end{itemize}
\end{lemma}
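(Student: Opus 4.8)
The plan is to reduce everything to a cohomology computation on the base elliptic curve $E$, combined with a Zariski-decomposition argument, organised along the trichotomy of $\SE$ already set up in the proof of Proposition \ref{EllipticRuled}. Write $S=\BPP(\SE)$ with $\SE$ normalized of rank $2$, so that $-K_S=\SO_S(2)\otimes\pi^*(\det\SE)^{-1}$; then by the projection formula and $\pi_*\SO_S(n)=\Sym^n\SE$,
\[
h^0(S,-mK_S)=h^0\!\big(E,\ \Sym^{2m}\SE\otimes(\det\SE)^{-m}\big).
\]
The inequality $\kappa(S,-K_S)\ge 0$ is already Proposition \ref{EllipticRuled}, so it remains to evaluate the right-hand side and to locate the base loci in each of Atiyah's cases.

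First I would treat $\SE$ decomposable, $\SE=\SO_E\oplus\SSL$ with $d:=\deg\SSL\le 0$; here $C_0^2=d$ and numerically $-K_S\equiv 2C_0-df$. If $d<0$ (put $e:=-d>0$) I take the Zariski decomposition $-K_S=P+N$ with $P=C_0+ef$ and $N=C_0$: one checks $P\cdot C_0=0$, $P\cdot f=1$, so $P$ is nef, and $P^2=e>0$, so $P$ is big; thus $\kappa(S,-K_S)=2$, and since the negative part $N=C_0$ occurs with coefficient $1$ it lies in the fixed part of $|{-}tK_S|$ for every sufficiently divisible $t$, so the base locus contains the curve $C_0$ — this is case (1). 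If $d=0$, then $\Sym^{2m}\SE\otimes\SSL^{-m}=\bigoplus_{j=-m}^{m}\SSL^{\,j}$, whence $h^0(S,-mK_S)=\#\{\,j:\ -m\le j\le m,\ \SSL^{\,j}\cong\SO_E\,\}$. When $\SSL$ is non-torsion this number is $1$ for all $m$, so $\kappa(S,-K_S)=0$ and $|{-}K_S|=\{C_0+C_1\}$ — case (3); when $\SSL$ has finite order $r$ (including $\SSL=\SO_E$) it equals $2\lfloor m/r\rfloor+1$, growing linearly, so $\kappa(S,-K_S)=1$ — case (2).

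For the two indecomposable cases I would argue more geometrically. If $\SE$ is indecomposable of degree $0$ (the Atiyah bundle) then $-K_S\sim 2C_0$ and $\Sym^{2m}\SE$ is again indecomposable of degree $0$ with $h^0=1$, so $h^0(S,-mK_S)=1$ for all $m$ and $|{-}mK_S|=\{2mC_0\}$, giving case (3). If $\SE$ is indecomposable of degree $1$ then, as established in the proof of Proposition \ref{EllipticRuled}, the fibration $p_1:S\to\BPP^1$ is a relatively minimal elliptic fibration with three multiple fibres of type ${}_{2}I_{0}$ and $-2K_S\sim F$ for a smooth fibre $F$; hence $|{-}2K_S|=p_1^*|\SO_{\BPP^1}(1)|$ is base point free, $h^0(S,-2mK_S)=m+1$, and $\kappa(S,-K_S)=1$ — case (2). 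In the remaining $\kappa=1$ subcase ($d=0$, $\SSL$ of order $r$) I would exhibit the elliptic fibration explicitly, exactly as for the degree-$1$ bundle: pulling back along the connected \'etale degree-$r$ cover $E'\to E$ that trivializes $\SSL$ gives $S\times_E E'\cong\BPP^1\times E'$, and $S=(\BPP^1\times E')/(\BZZ/r)$ with $\BZZ/r$ acting by a translation on $E'$ and a rotation on $\BPP^1$; the first projection descends to a relatively minimal elliptic fibration $h:S\to\BPP^1$, along which $-K_S$ is numerically a positive multiple of a fibre, so $|{-}tK_S|$ is base point free and equals the pullback of a very ample system on $\BPP^1$ for divisible $t$.

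I expect the main obstacle to be the delicate dichotomy inside the $d=0$ decomposable family: it is precisely the torsion versus non-torsion behaviour of $\SSL$ that separates $\kappa=1$ (case (2)) from $\kappa=0$ (case (3)), and this distinction is invisible numerically, forcing one to use the exact linear-equivalence class $-K_S\sim C_0+C_1$ together with the count of $j$ with $\SSL^{\,j}\cong\SO_E$. The secondary difficulty is verifying base point freeness and relative minimality of the elliptic fibrations in case (2); for the degree-$1$ bundle this is handed to us by Proposition \ref{EllipticRuled}, but for torsion $\SSL$ it requires the explicit quotient presentation above (or an appeal to semiampleness of a nef divisor of Iitaka dimension $1$ on a surface). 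By contrast the big case (1) is routine once the Zariski decomposition $-K_S=(C_0+ef)+C_0$ is written down.
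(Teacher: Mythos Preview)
Your argument is correct but follows a genuinely different route from the paper. The paper gives a short uniform proof: writing the Zariski decomposition $-K_S=P+N$ and using the single numerical fact $(-K_S)^2=0$ (valid for any elliptic ruled surface), one gets $0=P^2+N^2$; if $\kappa(S,-K_S)=2$ then $P^2>0$ forces $N\neq 0$, whence the $1$-dimensional base locus; if $\kappa\le 1$ then $P^2=0$ forces $N=0$, so $-K_S$ itself is nef with square zero, and the standard base-point-free/adjunction argument finishes cases (2) and (3). No reference to the isomorphism type of $\SE$ is needed. By contrast you run the Atiyah trichotomy for $\SE$ and compute $h^0(E,\Sym^{2m}\SE\otimes(\det\SE)^{-m})$ case by case, producing the explicit Zariski decomposition $(C_0+ef)+C_0$ when $d<0$, and the explicit elliptic fibrations (via the quotient construction or via Proposition~\ref{EllipticRuled}) in the $\kappa=1$ cases. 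What the paper's approach buys is brevity and a proof that is manifestly characteristic-free once $(-K_S)^2=0$ and surface Zariski decomposition are granted; what your approach buys is a precise dictionary matching each normalized $\SE$ to the outcome (1)--(3) --- in particular isolating the torsion/non-torsion dichotomy for $\SSL$ in the $d=0$ decomposable family, which is invisible to the paper's numerical argument --- and explicit identification of the fixed curve $C_0$ and of the anticanonical elliptic pencils.
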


\begin{proof}
Note that $\kappa(S, -K_S) \ge 0$ by
Proposition \ref{EllipticRuled}  or \cite[Table 3.1]{Sa} (when $k = \C$).

Let $-K_S = P + N$ be the Zariski-decomposition, where $P$ is nef, $P . N = 0$, $N = \sum a_i N_i$ is effective and
has negative intersection matrix $(N_i . N_j)$.
Note that $\kappa(S, -K_S) = \kappa(S, P)$, and the integral part of
$tN$ is contained in the fixed part of $|{-}tK_S|$ for all $t \ge 1$.

If $\kappa(S, -K_S) = 2$, then $-K_S$ and hence $P$ are big. So $P$ is nef and big, and $P^2 > 0$.
Thus $0 = (-K_S)^2 = P^2 + N^2 > N^2$ implies that $N \ne 0$. This is Case(1).

Now we may assume that $\kappa(S, -K_S) = 0$ or $1$. Hence $P$ is not big, so $P^2 = 0$.
Thus $0 = (-K_S)^2 = P^2 + N^2 = N^2$ implies $N = 0$ (the zero divisor). Namely, $-K_S = P$ is nef (but not big).

Suppose that $\kappa(S, -K_S) = 0$.
Then the statement in Case (3) is clear, because $-K_S$, hence $K_S$, is not numerically trivial.

Suppose that $\kappa(S, -K_S) = 1$. Since $-K_S$ is nef and $(-K_S)^2 = 0$,
it follows that $|{-}tK_S|$ is base point free for $t >>1$ and sufficiently divisible.
The adjunction formula for fibre, implies then that $|{-}tK_S|$ defines an elliptic fibration $f: S \to \PP^1$
(with fibres of the ruling $S \to E$ all horizontal to $f$).
This proves the lemma.
\end{proof}

\section{Proof of Theorem \ref{ThA} when $q = 2$}

\begin{proposition}\label{Prop_q2}
The case $q(X) = 2$ in Proposition \ref{PropB} does not occur. (Here our $X$ is defined over $k = \C$, though the argument works over any field $k = \overline{k}$ with $\ch k = 0$.)
\end{proposition}

\begin{proof}
Suppose the contrary that the case $q(X) = 2$ in Proposition \ref{PropB} occurs and
we shall derive a contradiction.

For the $\sigma$-equivariant fibration $f: X \to Y$ there,
let $X_y$ be a fibre over $y \in Y$. Then $X_y \cong \PP^1$.
As in \cite[Theorem 7.2]{RRZ}, write the wild automorphism $\sigma = T_b \circ \alpha$ on $Y$
where $T_b$ is a translation on the abelian surface $Y$
and $\alpha : Y \to Y$ is a group automorphism such that the endomorphism $\beta = \alpha - \id_Y$
is nilpotent.

If $\beta = 0$ then $\sigma = T_b$ and it is a translation; by Lemma \ref{red2},
the anti-canonical divisor $-K_{X_y}$ of a general fibre $X_y$, is not big, which contradicts that $X_y \cong \PP^1$
has ample anti-canonical divisor.

Thus we may assume that $\beta \ne 0$. Let $B$ be a connected component of $\Ker \beta$.
Then $B$ is a nontrivial proper subtorus of the $2$-torus $Y$, and hence an elliptic curve.
Thus $\sigma$ permutes the coset of $E := Y/B$, an elliptic curve.
Hence the quotient map $Y \to E$ is $\sigma$-equivariant.
Since the action of $\sigma$ on $Y$ is wild, so is the action of $\sigma$ on $E$, see Lemma \ref{fin}.
Hence $\sigma$ is a translation of infinite order, see \cite[Remark 8.3]{RRZ}.

Consider the $\sigma$-equivariant fibration $g: X \to E$, which is the composition $X \to Y \to E = Y/B$.
It is a smooth fibration (cf. Lemma \ref{fin}) with each fibre $X_e$ over $e \in E$
a relatively minimal ruled surface over the elliptic curve $Y_e$ (the fibre of $Y \to E$ over $e \in E$),
noting that every fibre of $X \to Y$ is isomorphic to $\PP^1$.

For $-K_{X_e} = (-K_X)|_{X_e}$,
note that $h^0(X_e, \SO(-mK_X)|_{X_e})$ and hence
$\kappa(X_e, -K_{X_e})$ are upper semi-continuous as functions in $e \in E$.
The subsets of points of $E$ over which these functions obtain larger values,
are Zariski closed subsets and $\sigma$-stable. Since $\sigma$ is wild, we may assume that
these functions are all constants as functions in $e \in E$.
Since $g: X \to E$ (a smooth curve) is flat and by Grauert's theorem,
$g_*\SO(-mK_X)$ is locally free and
we have the following natural isomorphism for all $m \ge 0$ and all (closed) point $e \in E$ (with residue field $k(e) = k$):
\begin{equation*} \tag{$*$}
g_* \SO(-mK_X) \otimes k(e) \cong H^0(X_e, \SO(-mK_X)|_{X_e}).
\end{equation*}

For $m>>1$ and sufficiently divisible, consider the rational map
$$h: X \dashrightarrow W = \PP(g_*\SO(-mK_X))$$
associated to the sheaf homomorphism
$$g^*g_* \SO(-mK_X) \to \SO(-mK_X)\, .$$
Since $\sigma^*(-K_X) = -K_X$, there is a natural action of $\sigma$ on $W$
compatible with that on $X$.

If $S = X_e$ has $\kappa(S, -K_S) = 2$ or $0$,
then Lemma \ref{EllipticRuled2} and the isomorphism $(*)$ above imply that the base locus
$\Bs|{-}mK_S|$ is $1$-dimensional for some $m >>1$ and divisible,
whose union
$$\cup_{e \in E} \, \Bs|{-}mK_{X_e}| = {\rm Supp}\, {\rm Coker}\, (g^*g_* \SO(-mK_X) \to \SO(-mK_X))$$
is a $\sigma$-stable Zariski-closed proper subset of $X$, a contradiction.

If $S = X_e$ has $\kappa(S, -K_S) = 1$,
then Lemma \ref{EllipticRuled2} and the isomorphism (*) above imply that
$|{-}mK_S|$
defines an elliptic fibration $S \to \PP^1$  for some $m >>1$ and divisible,
and $h: X \to W$ is a well-defined morphism.
Thus $W$ is a ruled surface, covered by rational curves $h(X_e)$,
and we have a $\sigma$-equivariant
fibration $h: X \to W$. But the action of $\sigma$ on $W$ is also wild by Lemma \ref{fin},
and hence $W$ must be an abelian surface by \cite[\S 6]{RRZ} or Theorem \ref{RRZth}. This is a contradiction.
This proves the proposition.
\end{proof}

\section{Proof of Theorem \ref{ThA} when $q = 1$}

We begin with:

\begin{lemma}\label{Og_lem}
Let $S$ be a smooth projective geometrically rational surface over a field $k$ (not necessarily algebraically closed) and $\Lambda := \NS(S/k)$, the N\'eron-Severi group of $S$ over the field $k$. Let $g \in \Aut (X/k)$ be an automorphism of $S$ over $k$. Assume that $g$ is of zero entropy in the sense that the spectral radius of $g^*|_{\Lambda}$ is one, and that
$g^*|_{\Lambda}$ is of infinite order. Then we have:
\begin{itemize}
\item[(1)]
There is a nonzero pseudo-effective $\Q$-Cartier divisor $v \in \Lambda_{\Q} := \Lambda \otimes_{\Z} \Q$
such that $g^*(v) \sim_{\Q} v$.
Moreover, the ray $\Q_{> 0}v$ of such $v$ is unique.
\item[(2)]
$v$ is nef and $v^2 = 0$.
\item[(3)]
Replacing $v$ by a multiple, we have
\begin{equation*} \tag{$*$}
K_S + v \sim_{\Q} N(S) := \sum_i a_i N_i.
\end{equation*}
Here if $N(S) \ne  0$, then $N(S)$ is an effective divisor
with $N_i$ integral curves,
the intersection matrix $(N_i . N_j)$ is negative
definite and $g^*(N(S)) = N(S)$.
The $v$ in (1) with the extra condition (*) above
is unique up to $\Q$-linear equivalence.
\end{itemize}
\end{lemma}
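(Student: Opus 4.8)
The plan is to deduce parts (1) and (2) purely from the fact that $g^*$ is a parabolic isometry of the hyperbolic lattice $\Lambda$, and then to prove (3) by realising $v$ as (a multiple of) the fibre class of a $g$-equivariant elliptic fibration and pulling back from its relatively minimal model.

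First I would set up the lattice picture. Since $S$ is rational, the Hodge index theorem gives that the intersection form on $\Lambda=\NS(S/k)$ has signature $(1,\rho-1)$, where $\rho=\rank\Lambda$. As $g^*$ is an isometry, its eigenvalue set is stable under $\lambda\mapsto\lambda^{-1}$ and under complex conjugation; together with the hypothesis that the spectral radius is $1$ this forces every eigenvalue to have modulus $1$, and then Kronecker's theorem (the characteristic polynomial is monic with integer coefficients) shows $g^*$ is quasi-unipotent. As $g^*|_\Lambda$ has infinite order, some power $u=(g^*)^N$ is a nontrivial unipotent isometry, and a short computation with $y=(u-1)x$ for a class with $(u-1)^2x=0\ne(u-1)x$ shows $y^2=0$, so $\ker(u-1)$ contains a nonzero isotropic class. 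I would then note that the form restricted to $W:=\ker(g^*-1)$ is negative semidefinite: a class $w\in W$ with $w^2>0$ would make $g^*$ act on the negative definite lattice $w^{\perp}$, hence as a finite-order isometry, contradicting infinite order. Since a totally isotropic subspace of a signature $(1,\rho-1)$ space is at most a line, the radical of $W$ is a single rational line $\R v$. This gives $v^2=0$ and $g^*v=v$ (the power already fixes $\R v$, and $g^*v=-v$ is excluded because $v$ and $-v$ cannot both be nef). Nefness of $v$, and simultaneously statement (2) and the uniqueness of the ray in (1), follow from the dynamical remark that for an ample class $A$ the normalised orbit $(g^*)^{-n}[A]$ converges to the unique parabolic fixed point on the boundary of the positive cone, which is therefore $\R_{\ge0}v$; as the nef cone is closed and $g^*$-stable, $v\in\Nef(S)$, and it is the unique nef isotropic $g^*$-fixed ray.

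For (3) I would first upgrade the numerical class $v$ to geometry. One checks $h^2(S,mv)=h^0(S,K_S-mv)=0$ for $m\ge0$ (otherwise $K_S\sim_{\Q}(\text{pseudo-effective})$, impossible on a rational surface), and Riemann--Roch then yields $\kappa(S,v)=1$ with $v$ semiample, defining a $g$-equivariant fibration $\phi:S\to\PP^1$ with $v$ a positive multiple of a fibre $F$. The general fibre must have arithmetic genus $1$: a genus-$0$ fibration is a conic bundle, and an automorphism preserving it fixes the fibre and horizontal classes and only permutes the finitely many components of the finitely many reducible fibres (one sees this on the relatively minimal Hirzebruch model after contracting the permuted fibral $(-1)$-curves), forcing $g^*|_\Lambda$ to have finite order. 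Let $\psi:S\to S'$ be the relative minimalisation to a relatively minimal elliptic surface $\phi':S'\to\PP^1$, and write $v=\psi^{*}v'$. The canonical bundle formula gives $-K_{S'}\equiv(-c_0)F'$ with $c_0<0$ (as $S'$ is rational), so $-K_{S'}$ is nef with $(-K_{S'})^2=0$; since numerical and linear equivalence coincide on a rational surface, after rescaling $v$ we may take $v'=-K_{S'}$, so $K_{S'}+v'\sim_{\Q}0$. Pulling back,
\[
K_S+v=\psi^{*}(K_{S'}+v')+\sum_j a_jE_j=\sum_j a_jE_j=:N(S),
\]
where the $E_j$ are the $\psi$-exceptional curves; this is effective with negative definite intersection matrix, and $g^*(K_S+v)=K_S+v$ gives $g(N(S))\sim_{\Q}N(S)$. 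Uniqueness of $v$ up to $\Q$-linear equivalence follows from the uniqueness of the ray together with the normalisation $v'=-K_{S'}$, which fixes the scale.

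The hard part will be the passage in (3) from the purely numerical, dynamically produced nef class $v$ to an honest $g$-equivariant elliptic fibration: establishing semiampleness of $v$ and excluding the conic-bundle alternative is exactly where the special structure of rational surfaces carrying an infinite-order zero-entropy automorphism (Halphen/Gizatullin-type surfaces) enters, and it is the one place a structural input beyond formal lattice theory is needed. By contrast, the lattice-theoretic steps (1)--(2) and the relative-minimalisation computation are essentially formal once the fibration is in hand.
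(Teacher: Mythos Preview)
Your treatment of (1) and (2) is essentially the paper's: both exploit that $g^*$ is a parabolic isometry of the hyperbolic lattice $\Lambda$, and the paper simply cites \cite{Og} for this step.

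For (3), however, your route diverges from the paper's and contains a genuine gap. You assert that Riemann--Roch ``yields $\kappa(S,v)=1$ with $v$ semiample'', but this is not what Riemann--Roch gives. Since $K_S\in\ker(g^*-1)$ and $v$ spans the radical of that negative semidefinite subspace, one has $K_S\cdot v=0$; together with $v^2=0$ this forces $\chi(\SO_S(mv))=1$ for every $m$, so Riemann--Roch only yields $h^0(mv)\ge 1$ and hence $\kappa(S,v)\ge 0$. The jump to $\kappa=1$ and to semiampleness is unjustified: the blow-up of $\BPP^2$ at nine points in very general position has $-K_S$ nef, isotropic, orthogonal to $K_S$, yet $h^0(-mK_S)=1$ for all $m$ and $-K_S$ is \emph{not} semiample. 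That surface carries no parabolic automorphism, so this does not contradict the lemma, but it does show that your argument---which nowhere uses $g$ at the semiampleness step---cannot work as written. You correctly sense at the end that ``Halphen/Gizatullin-type'' input is needed here; indeed the statement that a rational surface with a parabolic automorphism carries a $g$-invariant elliptic pencil is essentially Gizatullin's theorem, a substantial result (typically proved over algebraically closed fields, while the lemma is stated over arbitrary $k$) that you would be re-proving.

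The paper avoids all of this by never asking whether $v$ itself is semiample. Instead it applies Riemann--Roch to $K_S+v$: from $K_S\cdot v=0$, $v^2=0$ and $h^2(K_S+v)=h^0(-v)=0$ one gets $h^0(K_S+v)\ge 1$, so $K_S+v$ is (linearly equivalent to) an effective divisor. Taking its Zariski decomposition $K_S+v=P+N$ and using $g^*(K_S+v)=K_S+v$ together with uniqueness of the Zariski decomposition gives $g^*P=P$; by the uniqueness in (1), $P=av$ in $\Lambda_\Q$, and $1-a>0$ since $K_S$ is not pseudo-effective on a rational surface. Replacing $v$ by $(1-a)v$ yields $K_S+v\sim_\Q N$ with $N$ the negative part, which is effective with negative definite intersection matrix by construction, and $g$-stable. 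This is a two-line argument once (1) is in hand, works over any field (the paper notes the Zariski decomposition does), and requires no fibration, no relative minimal model, and no Gizatullin. Your relative-minimalisation computation, while correct in spirit once a fibration exists, is simply not needed.
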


\begin{proof}
Since $\Lambda$ is hyperbolic, (1) and (2) are proved in the exactly same way as in \cite[Theorem 2.1]{Og}

For (3), by replacing $v$ by a positive multiple, we may assume that $v \in \Lambda$
is (represented by) an integral Cartier divisor.

First note that $K_S . v = 0$. Otherwise, $(K_S + tv)^2 > 0$ for some integer $t$ and $g^*(K_S + tv) = K_S + tv$. Since $(K_S+ tv)^{\perp}$ ($\subset {\Lambda}$) is negative definite, it follows that $g^*|_{\Lambda}$ is of finite order, a contradiction.

By the Serre duality, we also have
$$h^2(K_S + v) = h^0(-v) = 0$$
as $v$ is nef and $v \not= 0$. Hence, by the Riemann-Roch formula, we obtain
$$h^0(K_S + v) \ge \frac{(K_S + v).v}{2} + \chi(\SO_S) = 1.$$
If $K_S + v \sim_{\Q} 0$ in $\Pic(S)_{\Q}$, then (3) clearly holds. If $K_S + v \not\sim 0$ in $\Pic(S)_{\Q}$, then one can take the Zariski decomposition
$$K_S + v = P + N$$
in $\Lambda_{\Q}$ (see eg. \cite[Theorem]{B} whose proof is valid over any field $k$). Now the same proof of \cite[Lemma 4.3, Page 175]{Z-anti_K} implies the result. Indeed, $g^*(P) = P$ in $\Lambda_{\Q}$ and $g^*N = N$ by $g^*(K_S + v) = K_S + v$ and by the uniqueness of the Zariski decomposition. Then, by (1), $P = av$ in $\Lambda_{\Q}$ for some rational number $a$. The rational number $1-a \in \Q$ is positive, since a smooth rational surface $S$ has $K_S$ non-pseudo effective. Replacing $v$ by $(1-a)v$ and putting $N(S) := N$, we obtain (3). Precisely, we have $g^*N = N$ where $N$ is regarded as in $\Lambda_{\Q} \subset \NS(X_{\overline{\eta}})_{\Q} = \Pic(X_{\overline{\eta}})_{\Q}$ (with the last equality due to $S$ being geometrically rational). Since $(N_i . N_j)$ is negative definite
(by the definition of the Zariski decomposition),
we have $g^*N = N$ as Weil $\Q$-divisors.
\end{proof}

\begin{proposition}\label{Prop_q1}
The case $q(X) = 1$ in Proposition \ref{PropB} does not occur.
\end{proposition}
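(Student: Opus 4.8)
The plan is to prove that the albanese fibration $f=\alb_X:X\to Y$ is \emph{isotrivial}, to pass to a finite \'etale cover on which it splits as a product, and then to reduce everything to the two--dimensional case already settled in Theorem \ref{RRZth}. Since $\sigma$ induces on $Y$ a translation of infinite order, the orbit $\{\sigma_Y^n(y_0)\}_{n\in\Z}$ is infinite, hence Zariski--dense in the curve $Y$, and $\sigma^n$ restricts to an isomorphism $X_{y_0}\xrightarrow{\sim}X_{\sigma_Y^n(y_0)}$. Writing $S:=X_{y_0}$, the relative isomorphism scheme $\mathrm{Isom}_Y(S\times Y,\,X)\to Y$ therefore dominates $Y$; taking a component dominating $Y$ and normalising a curve inside it produces a finite \'etale cover $\pi:Y'\to Y$ (so $Y'$ is again an elliptic curve) together with an isomorphism $X\times_Y Y'\cong S\times Y'$ over $Y'$. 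The translation $\sigma_Y$ lifts to a translation $\sigma_{Y'}$ of $Y'$, so $\sigma$ lifts to $\tilde\sigma$ on $S\times Y'$; the map $S\times Y'\to X$ is finite \'etale and $\tilde\sigma$--equivariant, whence $\tilde\sigma$ is wild by Lemma \ref{fin}(5). In product coordinates $\tilde\sigma(s,y')=(\Phi(y')(s),\sigma_{Y'}(y'))$ with $\Phi:Y'\to\Aut(S)$ a morphism; as $Y'$ is projective and $\Aut^0(S)$ is linear (because $q(S)=0$), $\Phi$ lands in one coset of $\Aut^0(S)$ and is constant. Thus $\tilde\sigma=\phi\times\sigma_{Y'}$ for a fixed $\phi\in\Aut(S)$, with $S$ a smooth rational surface.

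Next I would dispose of $\phi$ case by case, each time exhibiting a $\tilde\sigma$--invariant proper closed subset of $S\times Y'$. If $\phi^*$ has finite order on $\NS(S)$, a suitable power $\tilde\sigma^m$ lies in the connected algebraic group $\Aut^0(S\times Y')=\Aut^0(S)\times Y'$ acting regularly on $S\times Y'$; being wild (Lemma \ref{fin}(2)), it would force $S\times Y'$ to be abelian by Proposition \ref{RRZp}, which is absurd since $S$ is rational. So $\phi^*|_{\NS(S)}$ has infinite order. If $\phi$ had positive entropy, then, using $b_1(S)=b_3(S)=0$ and $H^2(S,\R)=\NS_{\R}(S)$, the topological Lefschetz number is $L(\phi^n)=2+\Tr\!\big(\phi^{*n}\,|\,H^2(S,\R)\big)\to+\infty$, since the spectral radius of $\phi^*$ on $H^2(S,\R)$ is $>1$; hence $\phi^n$ has a fixed point for $n\gg1$, and the finite $\phi$--orbit of such a periodic point, times $Y'$, is a $\tilde\sigma$--invariant proper subset --- a contradiction. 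Therefore $\phi$ has zero entropy while $\phi^*|_{\NS(S)}$ is of infinite order, which are exactly the hypotheses of Lemma \ref{Og_lem}.

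Applying Lemma \ref{Og_lem} to $\phi$ on $S$ gives a nef class $v$ with $v^2=0$ and $K_S+v\sim_{\Q}N(S)$, where $\phi(N(S))=N(S)$. As $\Supp N(S)\times Y'$ is $\tilde\sigma$--invariant, wildness forces $N(S)=0$; thus $-K_S\sim_{\Q}v$ is nef with $(-K_S)^2=0$, so $\kappa(S,-K_S)\in\{0,1\}$ (in particular the $\kappa=2$ possibility excluded by Lemma \ref{red2} is recovered). If $\kappa(S,-K_S)=0$, the unique member $D$ of $|{-}mK_S|$ is $\phi$--invariant and $D\times Y'$ contradicts wildness. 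If $\kappa(S,-K_S)=1$, then $v=-K_S$ is semiample and defines a $\phi$--equivariant fibration $S\to\PP^1$ (the base being rational since $q(S)=0$); taking the product with $Y'$ yields a $\tilde\sigma$--equivariant surjection $S\times Y'\to\PP^1\times Y'$ onto a ruled surface over the elliptic curve $Y'$, on which $\tilde\sigma$ acts wildly by Lemma \ref{fin}(4). By Theorem \ref{RRZth} this ruled surface would be abelian --- impossible. This exhausts all cases, so $q(X)=1$ cannot occur.

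The step I expect to be most delicate is the isotriviality and splitting: because rational surfaces have no separated moduli space, the passage from ``a Zariski--dense set of mutually isomorphic fibres'' to a product after a finite cover must be argued through the relative $\mathrm{Isom}$--scheme, and one must verify that the trivialising cover can be chosen \'etale --- so that $Y'$ remains elliptic and Lemma \ref{fin}(5) applies --- and that $\sigma$ genuinely lifts to it. Once the product form $\tilde\sigma=\phi\times\sigma_{Y'}$ is secured, the remaining inputs (constancy of $\Phi$, the Lefschetz count yielding a periodic point, and semiampleness of the nef class $v$ on a surface) are comparatively routine, and the whole argument collapses onto the two--dimensional Theorem \ref{RRZth} together with the structural Lemmas \ref{fin}, \ref{Og_lem} and Proposition \ref{RRZp}.
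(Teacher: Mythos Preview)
Your strategy and the paper's diverge at the very first move, and the divergence is exactly at the step you yourself flag as ``most delicate'': the isotriviality/splitting of $f:X\to Y$.  As written, that step does not go through.  From the fact that $\sigma^n$ gives isomorphisms $X_{y_0}\cong X_{\sigma_Y^n(y_0)}$ for all $n$ you deduce that $\mathrm{Isom}_Y(S\times Y,X)\to Y$ dominates $Y$.  But this scheme is only \emph{locally} of finite type over $Y$: since $S$ is a rational surface, $\Aut(S)$ may have infinitely many connected components, and correspondingly the Isom scheme may have countably many irreducible components.  Nothing prevents each component from hitting only finitely many points of the (countable) orbit $\{\sigma_Y^n(y_0)\}$, so no single component is forced to dominate.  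Even granting a dominating component, your extraction of a trivialising cover $Y'\to Y$ by ``normalising a curve inside it'' yields a finite morphism, not an \'etale one; if it ramifies, $Y'$ has genus $\ge 2$, the translation $\sigma_Y$ need not lift, and Lemma \ref{fin}(5) no longer applies.  Since the later cases (parabolic $\phi$, in particular) force $\Aut(S)/\Aut^0(S)$ to be infinite, there is no easy rescue via ``$P/\Aut^0(S)\to Y$ is finite \'etale'' either.

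The paper sidesteps isotriviality entirely.  It works with the \emph{generic fibre} $X_\eta$ over the function field $k(Y)$ and the restricted lattice $\Lambda=\NS(X)|_{X_\eta}$, on which $\sigma^*$ acts isometrically; by the product formula $d_1(\sigma)=\rho(\sigma^*|_\Lambda)$.  The trichotomy you run on $\phi\in\Aut(S)$ is replaced by the trichotomy for $\sigma^*|_\Lambda$: the positive-entropy case is excluded not by a Lefschetz count but by invoking Lesieutre's structural result \cite{Le} (which produces a $\sigma$-equivariant map to a surface, forced to be abelian by Theorem \ref{RRZth}); the finite-order case is excluded via a relative MMP step over $Y$, showing that the exceptional divisor $D_0$ of a divisorial contraction would become $\sigma$-periodic; and the parabolic case is excluded by applying Lemma \ref{Og_lem} to $X_{\overline\eta}$ together with Lemma \ref{kernew} to control the kernel $\NS_{\Q}(X)\to\NS_{\Q}(X_\eta)$.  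Your parabolic and $\kappa=0,1$ endgames are morally the same as the paper's, just transported from $S$ to $X_{\overline\eta}$.

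In short: your downstream case analysis is sound and closely parallels the paper's, but the reduction to a product $S\times Y'$ is a genuine gap, not a routine verification, and the paper's proof shows precisely how to avoid having to make it.
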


\begin{proof}
Suppose the contrary that the case $q(X) = 1$ in Proposition \ref{PropB} occurs, we shall derive a contradiction.

For the $\sigma$-equivariant fibration $f: X \to Y$ there,
let $X_y$ be a fibre over $y \in Y$. Then $F = X_y$ is a smooth
rational surface and $Y$ is an elliptic curve.
For the wild automorphism action of $\sigma$ on the elliptic curve $Y$,
we have that $\sigma$ is a translation of infinite order.

Let $\eta \in Y$ be the scheme generic point of $Y$ and $X_{\eta} = f^{-1}(\eta)$ the scheme generic fibre of $f$.
Since $\NS(X)|_{X_{\eta}} = \NS(X_{\eta})$ (cf. Lemma \ref{kernew}),
we consider the natural action $\sigma^*$ on
$$\Lambda := \NS(X)|_{X_{\eta}} = \NS(X_{\eta})$$
defined as:
$$\sigma^*(M|_{X_{\eta}}) := (\sigma^*M)|_{X_{\eta}}.$$
We can also define a pairing on $\Lambda$ by
$$({M_1}|_{X_{\eta}} . {M_2}|_{X_{\eta}})_{X_{\eta}}.$$
This paring is compatible with that on $\NS(X_{\eta})$. It is non-degenerate and has signature $(1, r-1)$ with
$r = \rank \Lambda = \rank \NS(X_{\eta})$.

By the flatness of $f$, we have
$$({M_1}|_{X_{\eta}} . {M_2}|_{X_{\eta}})_{X_{\eta}}= ({M_1}|_{F} . {M_2}|_{F})_{F} = (M_1 . M_2 . F)_{X}.$$
Here $F$ is any closed fibre of $f$.

Since $\sigma^*F \equiv F$ (numerical equivalence),
we have
$$(\sigma^*({M_1}|_{X_{\eta}}) . \sigma^*({M_2}|_{X_{\eta}})) = (\sigma^*M_1.\sigma^*M_2.F)_X = (M_1.M_2.F)_X = ({M_1}|_{X_{\eta}} . {M_2}|_{X_{\eta}}).$$
Hence the action of $\sigma$ on $\Lambda$ is an isometry with respect to its pairing and
the spectral radius $\rho({\sigma^*}|_{\, \Lambda})$
is just the first relative dynamical degree $d_1(\sigma|_{f})$ defined in \cite{DNT}.
In particular, we have
$$d_1(\sigma) = \rho({\sigma^*}|_{\, \Lambda})$$
by the product formula, because $Y$ is a curve with $d_1(\sigma|_{Y}) = 1$.

Run the relative minimal model program (MMP) over the elliptic curve $Y$.
Let $X \to X_1$ be the contraction of a $K_X$-negative extremal ray over $Y$.
Then $\rho(X) = \rho(X_1) + 1$.

Suppose that the contraction $X \to X_1$ over $Y$ is a Fano contraction, i.e., a Mori fibre space.
Note that every variety has only finitely many Fano contractions of extremal rays (cf. \cite[Theorem 2.2]{Wi}).
Replacing $\sigma$ by a positive power, we may assume that $\sigma$
stabilizes the extremal ray of the contraction $X \to X_1$
and hence the action of $\sigma$ descends to that on $X_1$
(cf. e.g. \cite[Lemma 2.12]{Z-Compo}) which is again wild, see Lemma \ref{fin}.

If $\dim X_1 \le 1$, then $X_1 = Y$, the fibration $X \to Y$ is a Fano contraction
and $\rho(X) = 2$. Now $\NS(X)_{\Q}$ is spanned by $-K_X$ and a fibre $X_y$ over $y \in Y$
both classes of which are $\sigma$-stable. So ${\sigma^*}|_{\, \NS(X)}$ is of finite order.
Thus $X$ is an abelian variety by Corollary \ref{FLc}, contradicting the existence of a Fano contraction $X \to Y$.

If $\dim X_1 = 2$, then $X_1$ is an abelian surface by Corollary \ref{RRZth}. However, then $1 = q(X) \ge q(X_1) = 2$, a contradiction.

Thus we may assume that $X \to X_1$ is birational.
Since a smooth threefold has no flip, $X \to X_1$ is a divisorial contraction
with the exceptional locus $D_0$, a prime divisor, see \cite{Mo}.
Since we are running the relative MMP of $X$ over the elliptic curve $Y$,
the intersection of $D_0$ with each fibre of $X \to Y$ is a union of
several (rational) curves to be contracted by the map $X \to X_1$.

If the exceptional divisor $D_0$
is $\sigma$-periodic, the union
of $\sigma^i(D_0)$ ($i \ge 0$)
is a $\sigma$-stable Zariski-closed proper subset of $X$, a contradiction.
Therefore, $D_0$ is not $\sigma$-periodic.

Now the proposition follows from the three claims below.

\begin{claim}
$d_1(\sigma |_X) = 1$.
\end{claim}

\begin{proof}
Suppose the contrary that $d_1(\sigma |_X) > 1$.
Then, since the locus $D_0$ of the divisorial extremal contraction $X \to X_1$
is not $\sigma$-periodic,
\cite[Lemma 6.3, Theorem 1.7]{Le} proves the lifting of $\sigma$ to some $X'$
with a $\sigma$-equivariant birational morphism $\rho_{\xi} : X' \to X$,
and the descending of $\sigma$ to some surface $S$
with a $\sigma$-equivariant surjective morphism $\tau : X' \to S$.
Both the action of $\sigma$ on $X'$ and $S$ are wild; see Lemma \ref{fin}.
Now $S$ is an abelian surface by Corollary \ref{RRZth}. However, then
$1 = q(X) = q(X') \ge q(S) = 2$,
a contradiction.
Thus the claim is true. \end{proof}

From now on, we may assume that $\rho({\sigma^*} |_{\Lambda})= d_1(\sigma |_X) = 1$.

\begin{claim}
The action of $\sigma$ on the hyperbolic lattice $\Lambda$ is of infinite order.
\end{claim}

\begin{proof}
Suppose the contrary that $\sigma^* |_{\Lambda}$ is of finite order.
Replacing $\sigma$ by a positive power, we may assume that $\sigma^*$ acts on $\Lambda$ as the identity.
Note that $\sigma^{-1}D_0$ is the exceptional locus of another divisorial contraction of extremal ray on $X$,
see e.g. \cite[Lemma 2.11]{Z-Compo}.
Let
$$X_{\overline{\eta}} = X_{\eta} \times_{{\rm Spec}\, \kappa(\eta)} {\rm Spec}\, \overline{\kappa(\eta)}$$
be the geometric generic fibre of $f$. Since both ${D_0} |_{X_{\overline{\eta}}}$ and $(\sigma^{-1}D_0) |_{X_{\overline{\eta}}}$ are defined over $X_{\eta}$, it follows from $\sigma^* = \id$ on $\Lambda \,$ ($\subset \NS(X_{\overline{\eta}})$) that
$$(D_0) |_{X_{\overline{\eta}}} = (\sigma^*D_0) |_{X_{\overline{\eta}}} \, \in \, \NS(X_{\overline{\eta}})$$

On the other hand, both $(D_0) |_{X_{\overline{\eta}}}$ and $(\sigma^*D_0) |_{X_{\overline{\eta}}}$ are
unions of a few
contractible curves
on the smooth rational projective surface $X_{\overline{\eta}}$.
The negativity of ${D_0}_{|X_{\overline{\eta}}}$ implies that ${D_0} |_{X_{\overline{\eta}}} = (\sigma^{-1}(D_0)) |_{X_{\overline{\eta}}}$ as sets. Then ${D_0} |_{X_{\eta}} = (\sigma^{-1}(D_0)) |_{X_{\eta}}$ as divisors on $X_{\eta}$. By taking the Zariski closure, we obtain $D_0 = \sigma^{-1}(D_0)$, as both $D_0$ and $\sigma^{-1}D_0$ are irreducible divisors on $X$. However, this contradicts the fact that $D_0$ is not $\sigma$-periodic ($\sigma$ being wild). This proves the claim.
\end{proof}

To finish the proof of Proposition \ref{Prop_q1}, we still have to prove the following.

\begin{claim}
It is impossible that the action of $\sigma$ on the hyperbolic lattice $\Lambda$ is parabolic, i.e., it
is of infinite order
and $\rho({\sigma^*} |_{\Lambda})= d_1(\sigma |_X) = 1$.
\end{claim}

\begin{proof}
Suppose the contrary that the action of $\sigma$ on $\Lambda$ is parabolic.
We apply Lemma \ref{Og_lem} to Case $q(X) = 1$ in Proposition \ref{PropB} with
$S = X_{\eta}$ the generic fibre of $f: X \to Y = \Alb(X)$, and $g^*$ equal to
$\sigma^* : \NS(X) \to \NS(X)$
restricted to the subspace $\Lambda  = \NS(X) |_{X_{\eta}} = \NS(X_{\eta})$ of the space $\NS(X_{\overline{\eta}}) =
\Pic(X_{\overline{\eta}})$.
Then, by Lemma \ref{Og_lem}, there is a nonzero nef $\Q$-divisor $v = V |_{X_{\eta}}$ in $\Lambda$ such that $v^2 = 0$,
$(K_X + V)|_{X_{\eta}} \sim_{\Q} N(X_{\eta})$, and
$\sigma^*(N(X_{\eta})) = N( X_{\eta})$. It follows that the Zariski closure of the image of $N(X_{\eta})$ in $X$ is a $\sigma$-stable proper closed subset of $X$. Thus $N(X_{\eta}) = 0$ since the wild automorphism $\sigma$ has no non-empty closed proper subset of $X$. Hence $(K_X + V)|_{X_{\eta}} \sim_{\Q} 0$.
Now $-K_{X_{\eta}} \sim V|_{X_{\eta}}$
is nef with zero self-intersection.
Hence $\kappa(X_{\eta}, -K_{X_{\eta}}) \in \{0, 1\}$
by the Riemann-Roch theorem.

If $\kappa(X_{\eta}, -K_{X_{\eta}}) = 0$,
then $\sigma$ stabilises the unique member in
$|{-}K_{X_{\eta}}|$ and its closure on $X$, contradicting that $\sigma$ is wild.
If $\kappa(X_{\eta}, -K_{X_{\eta}}) = 1$,
then a multiple of the nef $-K_{X_{\eta}}$ supports fibres of the unique (relatively minimal) elliptic fibration on $X_{\eta}$.
This fibration has singular fibres by Kodaira's canonical bundle formula for elliptic (rational) surfaces.
Now $\sigma$ stabilises the union (and its closure in $X$) of singular fibres.
This contradicts that $\sigma$ is wild.
The claim is proved.
\end{proof}

This proves Proposition \ref{Prop_q1}.
\end{proof}

\section{Proof of Theorem \ref{ThB}}\label{sect_CY}

\begin{lemma}\label{cy3}
Let $X$ be a Calabi-Yau manifold of dimension three. Suppose $\sigma$ is a wild automorphism of $X$.
Then $K_X$ is trivial in $\Pic (X)$ and $\sigma$ has zero entropy.
\end{lemma}

\begin{proof}
Since $\chi(\SO_X) = 0$ by Proposition \ref{PropA} and $h^1(\SO_X) = 0$ by assumption, it follows that $h^3(\SO_X) > 0$. Hence $h^0(\SO_X(K_X)) = h^0(\SO_X) > 0$ by the Serre duality. Thus $K_X = 0$ in $\Pic (X)$ as $K_X$ is $\Q$-linearly equivalent to $0$.

We show that $\sigma$ has zero entropy.
Indeed, by \cite{DNT}, replacing $X$ by the universal cover and $\sigma$ by its lifting, we may assume that $X$ is a Calabi-Yau threefold in the strict sense. Since $h^1(\SO_X) = 0$, we have $b_5(X) = b_1(X) = 0$. Thus, under the notation of Propoition \ref{EulerLefschetz}, we have $A = H^3(X, \Z)/(\torsion)$. By Proposition \ref{EulerLefschetz}, it suffices to show that the complex eigenvalues of $\sigma^*|_A$ are roots of the unity.

Again, by $b_5(X) = 0$, the cohomology group $H^3(X, \C)$ is primitive. So is each Hodge component $H^{p,q}(X)$ of $H^{3}(X, \C)$.
Thus the product on $H^{2,1}(X)$ defined below
$$H^{2,1}(X) \times H^{2,1}(X) \ni (\eta, \eta) \mapsto \eta \wedge \overline{\eta}$$
is, up to the following unit (with $(p, q; n, k) = (2, 1; 3, 0)$)
$$c_{p,q; n, k} :=  (\sqrt{-1})^{p-q}(-1)^{(n-k)(n-k-1)/2}, $$
a definite Hermitian form by the Hodge-Riemann relation as in \cite[page 123]{GH}.
Hence the action of $\sigma^*$ on $H^{1,2}(X) \oplus H^{2,1}(X) = H^{1,2}(X) \oplus \overline{H^{1,2}(X)}$ is unitary.
For the same reason, the action of $\sigma^*$ on $H^{3,0}(X) \oplus H^{0,3}(X) = H^{3,0}(X) \oplus \overline{H^{3,0}(X)}$
is also unitary.
So the eigenvalues of the action of $\sigma$ on $H^3(X, \C)$
are of absolute value $1$. Therefore, they are roots of the unity since the action of $\sigma^*$ is defined on the integral $H^3(X, \Z)$ and by Kronecker's theorem. This proves the lemma.
\end{proof}

\begin{lemma}\label{cy4}
Let $X$ be a projective variety of dimension $\le 4$ such that the Kodaira dimension $\kappa (X) \ge 0$.
Suppose $\sigma$ is a wild automorphism of $X$.
Then $\sigma$ has zero entropy.
\end{lemma}

\begin{proof}
By Proposition \ref{PropA}, $K_X \sim_{\Q} 0$ and we may assume that $X$ coincides with its Beauville-Bogomolov minimal split cover and that
each factor $X_i$ of $X$ is stable under $\sigma$ so that $\sigma|_{X_i}$ is wild. Then $\chi(\SO_{X_i}) = 0$ by Proposition \ref{PropA} (1) and also $\dim X_i \le 4$. Therefore $X_i$ is either an abelian variety or a Calabi-Yau threefold
in the strict sense (Proposition \ref{PropA}). Note that $\sigma$ has zero entropy if and only if $\sigma|_{X_i}$ has zero entropy for every factor $X_i$.
If $X_{i}$ is a Calabi-Yau threefold, then $\sigma|_{X_i}$ has zero entropy by Lemma \ref{cy3}.
If $X_{i}$ is an abelian variety, then $\sigma|_{X_i}$ has zero entropy by \cite[Theorem 0.2]{RRZ}.
This proves the lemma.
\end{proof}

\begin{lemma}\label{fourfolds}
Let $X$ be a projective variety over $\C$ of dimension four such that
$q(X) = 0$. Suppose $\sigma$ is a wild automorphism of $X$.
Then $\sigma$ has zero entropy.
\end{lemma}

\begin{proof}
By Lemma \ref{fin}, $X$ is smooth. Let $H^i(X, \Z)_f := H^i(X, \Z)/(\torsion)$ be the free part of $H^i(X, \Z)$.
We need to show the claim that $d_1(\sigma) = 1$.

Note that $\sigma^*$ preserves the closure of the K\"ahler cone of $X$ and the K\"ahler cone spans $H^{1,1}(X, \R)$. Thus, by the Perron-Frobenius theorem,
$\sigma^*L_{\sigma} = d_1(\sigma) L_{\sigma}$ for some nonzero class $L_{\sigma}$ in the closure of the K\"ahler cone of $X$, called a nef class. Of course,
$L_{\sigma} \in H^{1,1}(X, \R) \subset H^2(X, \R)$.

Here and hereafter, we note that $\sigma$ preserves $H^{i,j}(X, \C)$ in the Hodge decomposition
$$H^r(X, \C) = \oplus_{i+j=r} \, H^{i,j}(X, \C),$$
and we denote by
$$H^r(X, K) = H^r(X, \Z) \otimes_{\Z} K$$
for $K = \Q, \R, \C$.

Let $a_i$ ($1 \le i \le r$) be the complex eigenvalues of $\sigma^*|_{H^2(X, \Z)_f}$ counted with multiplicities. Then one of these $a_i$ is $d_1(\sigma)$; this is because
these $a_i$ are also the eigenvalues of $H^2(X, K)$ for
$K = \Q, \R$ or $\C$.
Then the complex eigenvalues of $(\sigma^{-1})^*|_{H^2(X, \Z)_f}$ are $a_i^{-1}$ ($1 \le i \le r$) counted with multiplicities. Since $H^2(X, \Z)_f$ and $H^6(X, \Z)_f$ are dual under the cup product, the complex eigenvalues of $\sigma^*|_{H^6(X, \Z)_f}$ are also $a_i^{-1}$ ($1 \le i \le r$) counted with multiplicities.
Then,
$$1 \le d_1(\sigma) = a_i\,\, ,\,\, 1 \le d_3(\sigma) = d_1(\sigma^{-1}) = a_j^{-1}$$
for some $1 \le i, j \le r$ possibly $i=j$. If $i=j$, then $d_1(\sigma) = 1$. This is because $a_i = 1$
derived from $1 \le a_i$ and $1 \le a_i^{-1}$ if $i=j$.

So, from now on we may assume that $i \not= j$. Then, by renumbering $a_i$ ($1 \le i \le r$) and by interchanging $\sigma$ and $\sigma^{-1}$ if necessary, we may assume that
$$d_1(\sigma) = a_1 \ge a_r^{-1} = d_1(\sigma^{-1}) = d_3(\sigma) \ge 1.$$
Recall that $b_7(X) = b_1(X) = 0$ by the assumption. Then, by Proposition \ref{EulerLefschetz}, $a_1$ is also an eigenvalue of either $\sigma^*|_{H^3(X, \Z)_f}$ or $\sigma^*|_{H^5(X, \Z)_f}$.

Assume first that $a_1$ is an eigenvalue of $\sigma^*|_{H^3(X, \Z)_f}$. Choose an eigenvector $0 \not= u \in H^3(X, \C)$. Since $\sigma^*$ preserves the Hodge decomposition, we may choose $u$ so that $u \in H^{p, q}(X)$ in some Hodge component $H^{p, q}(X)$ of $H^3(X, \C)$. Consider the complex conjugate $\overline{u}$ of $u$ with respect to $H^3(X, \R) \otimes_{\R} \C = H^3(X, \C)$.
Then $\overline{u} \in H^{q, p}(X)$ and $\sigma^*\overline{u} = a_1\overline{u}$, as $\sigma^*$ is defined over $H^3(X, \R)$ and $a_1$ is real.

On the other hand, $H^3(X, \Z)$ is primitive by $b_7(X) = b_1(X) = 0$. Hence the Hodge component $H^{p, q}(X)$ is also primitive.
Then, as in the proof of Lemma \ref{cy3}, we have the unit $c_{p,q, n, k}$ as given there but with
$(p, q; n, k) = (p, q; 4, 1)$ and $p+q = 3$ now,
such that
$$c_{p,q; n,k}(v.\overline{v}.\eta) > 0$$
for every $v \in H^{p,q}(X) \setminus \{0\}$ and for every K\"ahler class $\eta \in H^{1,1}(X, \R)$ by the Hodge-Riemann relation for the primitive cohomology group $H^{p, q}(X)$ (\cite[page 123]{GH}).
In particular,
$$u \overline{u} \not= 0$$
in $H^{3,3}(X, \R)$ and satisfies
$$\sigma^*(u \overline{u}) = a_1^2u \overline{u}.$$
Thus, $a_1^2$ is a positive real eigenvalue of $\sigma^*|_{H^{3,3}(X, \R)}$. Since $d_3(\sigma) = a_r^{-1}$ is the spectral radius of $\sigma|_{H^6(X, \R)}$, it follows that
$$a_1^2 \le d_3(g) = a_r^{-1} \le a_1. $$
For the last inequality, we used our assumption that $a_1 \ge a_r^{-1} \ge 1$. Hence $a_1 = 1$ again, since $1 \le a_1$. Thus $d_1(\sigma) = 1$ as claimed.

Next we consider the case where $a_1$ is an eigenvalue of $\sigma^*|_{H^5(X, \Z)_f}$. Then $a_1$ is an eigenvalue of $(\sigma^{-1})^{*}|_{H^3(X, \Z)_f}$ by the duality.
Then applying the same argument as above for $\sigma^{-1}$, we find that there are a Hodge component $H^{p, q}(X)$ of $H^3(X, \C)$ and a non-zero element $u \in H^{p, q}(X)$ such that $u \overline{u} \in H^{3,3}(X, \R)$ is an eigenvector of $(\sigma^{-1})^{*}|_{H^{3,3}(X, \R)}$ corresponding to an eigenvalue $a_1^2$, which is real and positive. Thus,
$$a_1^2 \le d_3(\sigma^{-1}) = d_1(\sigma) = a_1.$$
Since $1 \le a_1$, it follows that $a_1 = 1$. Hence $d_1(\sigma) = 1$ also in this case, as claimed.

This proves the lemma.
\end{proof}

\begin{proposition}\label{PropC}
Let $X$ be a projective variety over $\C$ with a wild automorphism $\sigma$.
\begin{itemize}
\item[(1)] Suppose $\dim X \le 3$. Then $X$ is either an abelian variety or a Calabi-Yau threefold
with the canonical divisor $K_X \sim 0$.
Moreover $\sigma$ has zero entropy.

\item[(2)] Suppose $\dim X = 4$. Then $\sigma$ has zero entropy, unless the Kodaira dimension
$\kappa(X) = - \infty$ and the irregularity $q(X) >0$.
\end{itemize}
\end{proposition}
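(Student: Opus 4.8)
The plan is to separate the birational classification from the entropy statement. For Part (1), when $\dim X \le 2$ Theorem \ref{RRZth} already gives that $X$ is an abelian variety, and when $\dim X = 3$ Proposition \ref{PropB} reduces the case in which $X$ is neither abelian nor Calabi--Yau to $q(X) \in \{1,2\}$, both of which are excluded by Propositions \ref{Prop_q1} and \ref{Prop_q2}; hence $X$ is an abelian variety or a Calabi--Yau threefold. To promote $K_X \sim_{\Q} 0$ to $K_X \sim 0$ in the latter case I would use Proposition \ref{PropA}: since $\kappa(X)=0$, the Beauville--Bogomolov minimal split cover $\widetilde{X}$ is a product of an abelian variety with strict-sense Calabi--Yau manifolds of odd dimension $\ge 3$, so in dimension three either $\widetilde{X}$ is abelian (whence $X$ is a $\Q$-torus and abelian by Proposition \ref{Qtorus}) or $\widetilde{X}=C$ is a single simply connected strict Calabi--Yau threefold. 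In the latter case $q(X)=0$ and $h^{2,0}(X)=0$, because both groups inject by pullback into the corresponding vanishing groups of $C$; then $\chi(\OO_X)=1-q(X)+h^{2,0}(X)-h^0(X,K_X)=1-h^0(X,K_X)$, and $\chi(\OO_X)=0$ from Proposition \ref{PropA}(1) forces $h^0(X,K_X)=1$. A torsion line bundle carrying a nonzero section is trivial, so $K_X \sim 0$.

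For the entropy I would argue as follows. The abelian case is immediate from Lemma \ref{fin}(6), so the substance lies in the Calabi--Yau threefold. A wild $\sigma$ has no $\sigma$-periodic proper closed subset; in particular the orbit of any periodic point would be such a subset, so every $\sigma^n$ is fixed-point-free. Consequently every topological Lefschetz number $L(\sigma^n)$ vanishes, and so does the Atiyah--Bott holomorphic Lefschetz number of $\sigma^n$ acting on $H^{\bullet}(X,\Omega^p_X)$ for each $p$ and $n$. Writing the entropy as $\log d_1$ with $d_1=\rho(\sigma^*|_{H^{1,1}(X)})$ and reading these identities against the Calabi--Yau Hodge diamond ($H^1=0$, $H^2=H^{1,1}$, $h^{2,0}=h^{3,1}=0$), the $p=0$ identity forces $\sigma^*$ to act trivially on $H^{3,0}(X)$, while the $p=1$ and $p=2$ identities yield the eigenvalue equalities $\{\alpha_i\}=\{\overline{\beta_j}\}$ and $\{\beta_j\}=\{\alpha_i^{-1}\}$ between the $\sigma^*$-eigenvalues $\alpha_i$ on $H^{1,1}$ and $\beta_j$ on $H^{2,1}$. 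In particular the spectral radius of $\sigma^*$ on the mixed group $H^{2,1}$ is exactly $d_1$.

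The hard part is to convert this into $d_1=1$. Assume $d_1>1$. Then, by Perron--Frobenius for the $\sigma^*$-invariant nef cone, $d_1$ is attained by a nonzero nef class $\eta^+$ with $\sigma^*\eta^+=d_1\eta^+$, and since $\sigma^*$ preserves the cubic intersection form one gets $(\eta^+)^3=0$, so $\eta^+$ is nef but not big. The obstacle is that the Lefschetz bookkeeping only pins the dominant eigenvalue down to being shared by $H^{1,1}$ and $H^{2,1}$; to reach a contradiction I expect to need either a strict estimate showing that the mixed spectral radius on $H^{2,1}$ is smaller than $d_1$ when $d_1>1$, or a geometric construction extracting from $\eta^+$ a $\sigma$-invariant proper closed subset (via its null locus or an associated fibration), which wildness forbids. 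This quantitative input is precisely what the Euler--Lefschetz analysis to follow (Proposition \ref{EulerLefschetz}) is designed to provide, and I regard it as the crux of the whole statement.

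For Part (2) I would split on the Kodaira dimension, which is always $\le 0$ by Proposition \ref{PropA}(2). If $\kappa(X)=0$, Proposition \ref{PropA} gives that the Beauville--Bogomolov cover is either abelian, so that $X$ is abelian by Proposition \ref{Qtorus}, or a product $\widetilde{X}=E\times C$ of an elliptic curve $E$ and a strict Calabi--Yau threefold $C$ (the abelian factor and the single odd Calabi--Yau factor that together fill dimension four). In that product case a suitable power of $\sigma$ acts diagonally and wildly on each factor, hence has zero entropy by the elliptic (translation) and threefold (Part (1)) cases; since topological entropy is unchanged under the finite \'etale cover and under passing to a power, $\sigma$ itself has zero entropy. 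The only remaining case is $\kappa(X)=-\infty$, and there I would run the same no-periodic-point and Lefschetz bookkeeping as above, now in dimension four, where the hypothesis $q(X)=0$ gives $H^1(X)=0$ and keeps the eigenvalue combinatorics rigid enough to force zero entropy (Proposition \ref{fourfolds}); the case $q(X)>0$ is left open as the stated exception, the positive irregularity being exactly what breaks the cohomological rigidity.
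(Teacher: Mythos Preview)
Your reduction in Part~(1) to the dichotomy ``abelian or Calabi--Yau threefold'' is correct and matches the paper's route (Theorem~\ref{RRZth}, Propositions~\ref{PropB}, \ref{Prop_q1}, \ref{Prop_q2}), and your $\chi(\OO_X)=0$ argument for $K_X\sim 0$ is essentially the paper's Corollary~\ref{cy3} written out via Hodge numbers rather than Serre duality. Your treatment of Part~(2) is likewise the paper's: Corollary~\ref{cy4} for $\kappa(X)=0$ and Proposition~\ref{fourfolds} for $\kappa(X)=-\infty$, $q(X)=0$.

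The genuine gap is the zero-entropy assertion for a Calabi--Yau threefold. Your holomorphic Lefschetz bookkeeping is fine and does yield the multiset identities $\{\alpha_i\}=\{\overline{\beta_j}\}$ and $\{\beta_j\}=\{\alpha_i^{-1}\}$ between the eigenvalues on $H^{1,1}$ and $H^{2,1}$; combined with the fact that $\sigma^*|_{H^{1,1}}$ is defined over $\Z$ (so $\{\alpha_i\}=\{\overline{\alpha_i}\}$), this gives only that $\{\alpha_i\}=\{\alpha_i^{-1}\}$, i.e.\ a palindromic characteristic polynomial. That is \emph{not} enough to force $d_1=1$: a Salem polynomial is palindromic and has a real root $>1$. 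Proposition~\ref{EulerLefschetz} does not supply the missing ``quantitative input'' you hope for; it is just the topological version of the same Lefschetz bookkeeping and yields nothing beyond the eigenvalue matching you already have. Your alternative suggestion (extract a $\sigma$-invariant fibration or null locus from the nef eigenclass $\eta^+$) would require semiampleness of $\eta^+$, which is exactly the open Conjecture~\ref{Og_conj}.

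What the paper actually does (Corollary~\ref{cy3}) is supply one further piece of Hodge theory that you are missing: since $b_1(X)=0$, the cohomology $H^3(X,\C)$ is primitive, and the Hodge--Riemann bilinear relations make the pairing $(\eta,\eta)\mapsto \eta\wedge\overline{\eta}$ on each $H^{p,3-p}$ definite up to a scalar. Hence $\sigma^*$ acts \emph{unitarily} on $H^{2,1}$ and on $H^{3,0}$, so every $\beta_j$ has $|\beta_j|=1$. Now your identity $\{\alpha_i\}=\{\overline{\beta_j}\}$ (or, in the paper's packaging, Proposition~\ref{EulerLefschetz} matching odd with even eigenvalues) forces $|\alpha_i|=1$; integrality and Kronecker's theorem then give roots of unity, hence $d_1=1$. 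This Hodge--Riemann step is the crux, and it is absent from your plan.
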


\begin{proof}
By Theorem \ref{RRZth} and Propositions \ref{PropB}, \ref{Prop_q2} and \ref{Prop_q1}, we already know that a projective
variety $X$ with $\dim X \le 3$ and with a wild automorphism is isomorphic to either an abelian variety or a Calabi-Yau threefold.
Thus Proposition \ref{PropC} (1) follows from Lemma \ref{cy3}.
Proposition \ref{PropC} (2) follows from Lemmas \ref{cy4} and \ref{fourfolds}.
\end{proof}

\begin{setup}
{\bf Proof of Theorem \ref{ThB}.}
It follows from Proposition \ref{PropC}
and Proposition \ref{irregularity}.
\end{setup}

\section{Wild automorphisms of zero entropy}\label{zero_ent}

In this section, we consider the case where $X$ is a (smooth) projective variety with a wild automorphism
$\sigma$ such that $\sigma$ is of zero entropy.

\begin{lemma}\label{homogrings}
Let $\sigma$ be an automorphism of zero entropy of a smooth projective variety. Then the following three conditions are equivalent.
\begin{itemize}
\item[(1)] $\sigma$ is wild.
\item[(2)] The twisted homogeneous coordinate ring $B(X, L, \sigma)$ is projectively simple for any $\sigma$-ample line bundle $L$ on $X$.
\item[(3)] The twisted homogeneous coordinate ring $B(X, L, \sigma)$ is projectively simple for at least one ample line bundle $L$ on $X$.
\end{itemize}
\end{lemma}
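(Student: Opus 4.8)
The plan is to deduce all three equivalences from two results already recalled in the introduction, so that the proof becomes pure bookkeeping once the standing hypothesis is unpacked. The first input is the criterion of Reichstein--Rogalski--Zhang \cite[Proposition 2.2]{RRZ}: for a \emph{given} $\sigma$-ample line bundle $L$ on $X$, the ring $B(X,L,\sigma)$ is projectively simple if and only if $\sigma$ is wild. The second is Keeler's theorem \cite[Theorem 1.2]{Ke}: the variety $X$ carries a $\sigma$-ample line bundle exactly when $\sigma$ has zero entropy, and in that case every ample line bundle on $X$ is already $\sigma$-ample. The whole point of the zero entropy hypothesis is to let me apply this second result, which reconciles the two different ampleness notions that appear in (2) (a statement about all $\sigma$-ample $L$) and in (3) (a statement about one ample $L$).

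First I would record the two consequences of the standing hypothesis that I will use repeatedly. Since $\sigma$ has zero entropy, \cite[Theorem 1.2]{Ke} gives both (a) the existence of at least one $\sigma$-ample line bundle on $X$, and (b) the fact that every ample line bundle on $X$ is $\sigma$-ample. As $X$ is projective, ample line bundles exist, so (b) in particular produces line bundles that are simultaneously ample and $\sigma$-ample; these are exactly the objects over which (2) and (3) need to be compared.

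Next I would establish the cyclic implication $(1)\Rightarrow(2)\Rightarrow(3)\Rightarrow(1)$. For $(1)\Rightarrow(2)$: if $\sigma$ is wild, then for each $\sigma$-ample $L$ the criterion \cite[Proposition 2.2]{RRZ} yields that $B(X,L,\sigma)$ is projectively simple; since wildness of $\sigma$ is a condition on $\sigma$ alone and does not depend on the choice of $L$, this holds uniformly for all $\sigma$-ample $L$, which is precisely (2). For $(2)\Rightarrow(3)$: choose any ample line bundle $L$ on $X$; by (b) it is $\sigma$-ample, so (2) applies to it and gives that $B(X,L,\sigma)$ is projectively simple, establishing (3). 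For $(3)\Rightarrow(1)$: let $L$ be the ample line bundle furnished by (3); again by (b) it is $\sigma$-ample, so \cite[Proposition 2.2]{RRZ} applies to it, and the projective simplicity of $B(X,L,\sigma)$ forces $\sigma$ to be wild. This closes the loop and proves the equivalence of (1), (2), (3).

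I do not expect any genuine analytic or geometric obstacle: all the substantive content is already packaged inside the two cited criteria. The only thing requiring care is the bookkeeping that keeps ``ample'' and ``$\sigma$-ample'' straight, together with the observation that the wildness criterion is uniform in $L$ (so that (1) controls \emph{all} $\sigma$-ample bundles at once). The single essential use of a hypothesis is the zero entropy assumption, invoked through \cite[Theorem 1.2]{Ke}: without it there might be no $\sigma$-ample line bundle at all, rendering (2) vacuous, and an ample bundle need not be $\sigma$-ample, breaking the passage between (2) and (3). Thus the ``hard part,'' if one insists on naming one, is nothing more than the correct application of \cite[Theorem 1.2]{Ke} to identify ample with $\sigma$-ample under the zero entropy hypothesis.
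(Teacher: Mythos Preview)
Your proposal is correct and follows essentially the same approach as the paper: both invoke Keeler's theorem \cite[Theorem 1.2]{Ke} to identify ample with $\sigma$-ample under the zero entropy hypothesis, and then apply the Reichstein--Rogalski--Zhang criterion (the paper cites \cite[Proposition 0.1]{RRZ}, which is the same result you cite as \cite[Proposition 2.2]{RRZ}) to pass between wildness of $\sigma$ and projective simplicity of $B(X,L,\sigma)$. Your cyclic argument $(1)\Rightarrow(2)\Rightarrow(3)\Rightarrow(1)$ matches the paper's implications exactly, just written out in more detail.
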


\begin{proof} By \cite[Theorem 1.2 (2)]{Ke}, any ample line bundle on $X$ is $\sigma$-ample, because $\sigma$ is of zero entropy. Thus, the implication (2) $\Rightarrow$ (3) is clear and the implications (1) $\Rightarrow$ (2) and (3) $\Rightarrow$ (1) follow from \cite[Proposition 0.1]{RRZ}.
\end{proof}

\begin{lemma}\label{non-diag}
Let $X$ be a (smooth) projective variety. Suppose that $\sigma$ is a wild automorphism of $X$ of zero entropy. Then $\sigma^*|_{\NS(X)_{\C}}$ is not diagonalizable, unless $X$ is isomorphic to an abelian variety.
\end{lemma}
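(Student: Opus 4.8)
The plan is to argue by contraposition: I assume $\sigma^*|_{\NS_\C(X)}$ is diagonalizable and deduce that $X$ is an abelian variety. The mechanism will be Corollary \ref{FLc}: it suffices to show that the action of $\sigma$ on $\NS(X)$ is of finite order, for then a power of $\sigma$ fixes the class of an ample (hence big) divisor and $X$ must be abelian. Thus everything reduces to upgrading diagonalizability together with zero entropy to finite order.

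First I would extract the constraint on eigenvalues coming from zero entropy. Because $\sigma$ has zero entropy, all dynamical degrees $d_k(\sigma)$ equal $1$; in particular $d_1(\sigma) = 1$ is the spectral radius of $\sigma^*$ on $H^{1,1}(X,\R)$, which contains the $\sigma^*$-invariant subspace $\NS_\R(X)$. Hence every eigenvalue $\lambda$ of $\sigma^*|_{\NS_\C(X)}$ satisfies $|\lambda| \le 1$, and $\lambda \ne 0$ since $\sigma^*$ is invertible. Next I would invoke integrality: $\sigma^*$ acts on the free part of $\NS(X)$ by an integer matrix, so its characteristic polynomial is monic with integer coefficients, and each eigenvalue $\lambda$ is a nonzero algebraic integer whose Galois conjugates, being roots of the same polynomial, are again eigenvalues and hence also of absolute value $\le 1$. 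By Kronecker's theorem, every such $\lambda$ is a root of unity.

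At this point diagonalizability does the rest: a diagonalizable operator all of whose eigenvalues are roots of unity has finite order, so $(\sigma^*)^m = \id$ on $\NS_\C(X)$ for some $m \ge 1$, and consequently $\sigma^*$ has finite order on $\NS(X)$. Corollary \ref{FLc} then yields that $X$ is an abelian variety, completing the contrapositive. I do not anticipate a serious obstacle; the only subtle point is conceptual rather than technical, namely that diagonalizability is exactly the hypothesis needed to rule out a nontrivial unipotent (parabolic) part, which could otherwise produce an operator of infinite order with modulus-one eigenvalues---precisely the configuration the lemma is isolating. Likewise the zero-entropy hypothesis is indispensable, since it is what forbids eigenvalues of modulus $\ne 1$ and thereby makes Kronecker's theorem applicable.
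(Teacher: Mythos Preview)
Your proposal is correct and follows essentially the same route as the paper. The paper's proof compresses the entire Kronecker argument into the single clause ``Since $\sigma^*|_{\NS_{\C}(X)}$ is defined over $\NS(X)$, the eigenvalues of $\sigma^*|_{\NS_{\C}(X)}$ are all roots of unity,'' implicitly using zero entropy for the modulus bound, and then concludes via diagonalizability and Corollary~\ref{FLc} exactly as you do.
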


\begin{proof}
We embed the lattice $\Lambda := \NS(X)/(\torsion)$ in $H^{1,1}(X\ \R) \subset H^{1,1}(X, \C)$.
By the assumption, $d_1(\sigma) = 1$, so the spectral radius of the pullback action $\sigma^*$ on $H^{1,1}(X, \C)$ and hence that on $\Lambda$ are $1$.
Thus every eigenvalue of $\sigma^*|_{\Lambda}$ is an algebraic integer of modulus $1$. Hence, by Kronecker's theorem, all eigenvalues of $\Lambda$, or equivalently of $\sigma^*|_{\NS(X)_{\C}}$, are roots of unity.

So, if $\sigma^*|_{\NS(X)_{\C}}$ is diagonalizable, then $(\sigma^*)^m = \id$ on $\NS(X)$ for some $m > 0$. In this case, $X$ is an abelian variety by Corollary \ref{FLc}. This proves the lemma.
\end{proof}

The following conjecture is in \cite[Question 2.6]{Og_IJM}.

\begin{conjecture}\label{Og_conj} (cf. \cite{Og_IJM})
Every nef $\Q$-Cartier divisor $L$ (not necessarily effective) on a Calabi-Yau manifold of dimension three is $\Q$-linearly equivalent to an effective divisor (and hence semi-ample by the known log Abundance in dimension $\le 3$).
\end{conjecture}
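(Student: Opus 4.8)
The plan is to attack this via Hirzebruch--Riemann--Roch together with a case analysis on the numerical dimension $\nu(D)$ of the nef $\Q$-Cartier divisor $D$. First I would reduce to the case where $X$ is a Calabi--Yau threefold in the strict sense. Let $\pi\colon\widetilde{X}\to X$ be the universal cover, which is finite \'etale since $\pi_1(X^{\mathrm{an}})$ is finite; by the Beauville--Bogomolov decomposition and the fact that $\widetilde{X}$ is simply connected of dimension three (which forbids an abelian factor, and dimension three forbids an irreducible symplectic factor), $\widetilde{X}$ is a strict Calabi--Yau threefold. The pullback $\pi^*D$ is nef, and if I can produce an effective $\widetilde{E}\sim_{\Q}\pi^*D$, then averaging $\widetilde{E}$ over $\Gal(\widetilde{X}/X)$ gives a Galois-invariant effective divisor which descends, via the \'etale map $\pi$, to an effective $E\sim_{\Q}D$ on $X$. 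Thus I may assume $K_X\sim 0$, $c_1(X)=0$, and $h^1(\OO_X)=h^2(\OO_X)=0$.

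With $c_1(X)=0$ and $\chi(\OO_X)=0$, Hirzebruch--Riemann--Roch reads
\[
\chi(\OO_X(mD))=\frac{m^3}{6}\,D^3+\frac{m}{12}\,D\cdot c_2(X).
\]
Since $X$ is a minimal threefold, Miyaoka's generic semipositivity of $c_2$ gives $D\cdot c_2(X)\ge 0$ for nef $D$, while nefness gives $D^3\ge 0$; hence $\chi(\OO_X(mD))\ge 0$ for all $m\ge 0$. When $\nu(D)=0$ the class $D$ is numerically trivial, hence torsion in $\Pic(X)=\NS(X)$ (using $h^1(\OO_X)=0$), so a multiple is $\OO_X$ and $D\sim_{\Q}0$. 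When $\nu(D)=3$ the divisor $D$ is nef and big; as $D-K_X=D$ is nef and big, the base-point-free theorem makes $|mD|$ base point free, in particular effective, for $m\gg 0$. These two extremes are routine.

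The substance lies in $\nu(D)\in\{1,2\}$, where $D^3=0$ and so $\chi(\OO_X(mD))=\tfrac{m}{12}\,D\cdot c_2(X)$ grows at most linearly. By Serre duality $h^3(\OO_X(mD))=h^0(\OO_X(-mD))=0$ for $m>0$, since a nonzero nef class cannot have $-mD$ effective, so
\[
h^0(\OO_X(mD))-h^1(\OO_X(mD))+h^2(\OO_X(mD))=\tfrac{m}{12}\,D\cdot c_2(X).
\]
To force $h^0>0$ for some $m$ I must control $h^1(\OO_X(mD))$ and $h^2(\OO_X(mD))=h^1(\OO_X(-mD))$. This is the \emph{main obstacle}: Kawamata--Viehweg vanishing is unavailable because $D$ is not big, and nothing a priori bounds the intermediate cohomology. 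Producing a section here is tantamount to showing that the numerical dimension equals the Iitaka dimension for the nef class $D$ --- an abundance-type statement --- which is exactly why the conjecture is open. The natural line of attack is to study the face of $\Nef(X)$ containing $[D]$ and try to prove $D$ is semiample, but semiampleness of non-big nef classes on Calabi--Yau threefolds is not known in general. The degenerate subcase $D\cdot c_2(X)=0$, in which even $\chi(\OO_X(mD))$ vanishes identically, is the hardest and would likely require extra input such as Wilson's analysis of the K\"ahler cone and the contractions or fibrations detected by $c_2$, or an effective non-vanishing theorem beyond what Riemann--Roch alone provides.
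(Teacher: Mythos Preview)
The statement you were asked to prove is a \emph{conjecture}, not a theorem: the paper does not prove it, but merely records it (citing \cite{Og_IJM}) and then assumes it as a hypothesis in Theorem~\ref{Thm_Ki}. There is therefore no proof in the paper against which to compare your attempt.

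Your proposal is not a proof either, and you are candid about this. The reduction to a strict Calabi--Yau threefold via the universal cover is fine, and the cases $\nu(D)=0$ (torsion, since $\Pic^0(X)=0$) and $\nu(D)=3$ (base-point-free theorem) are handled correctly. For $\nu(D)\in\{1,2\}$ you compute $\chi(\OO_X(mD))=\tfrac{m}{12}\,D\cdot c_2(X)\ge 0$ and observe that $h^3$ vanishes, but then you explicitly state that controlling $h^1$ and $h^2$ is ``the main obstacle'' and that the question is tantamount to abundance for nef non-big classes on Calabi--Yau threefolds, which is open. That diagnosis is accurate and matches the status of the conjecture; the degenerate subcase $D\cdot c_2(X)=0$ is indeed the crux. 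So your write-up is a reasonable survey of what is known and where the difficulty lies, but it is not a proof, and none is expected here since the paper itself treats the statement as conjectural.
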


The following is asserted in \cite{Ki}.

\begin{theorem}\label{Thm_Ki} (cf. \cite[Theorem 4.7]{Ki}, and also Remark \ref{rem:Ki})
Assume Conjecture \ref{Og_conj} for $3$-dimensional Calabi-Yau manifolds in the strict sense. Let $X$ be a (smooth) Calabi-Yau threefold. Then $X$ does not admit a wild automorphism.
\end{theorem}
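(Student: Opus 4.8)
The plan is to argue by contradiction, extracting from a wild automorphism a nonzero nef $\sigma$-invariant divisor class and then using Conjecture \ref{Og_conj} together with Proposition \ref{PropA}(2) to rule it out. So suppose $X$ is a Calabi-Yau threefold carrying a wild automorphism $\sigma$. By Corollary \ref{cy3}, $\sigma$ has zero entropy and $K_X \sim 0$. Since $\pi_1(X^{\mathrm{an}})$ is finite we have $q(X) = 0$, hence $\Pic^0(X) = 0$ and $\Pic(X) = \NS(X)$; in particular numerical and $\Q$-linear equivalence agree on $\NS_{\Q}(X) = \Pic_{\Q}(X)$. As $X$ is not an abelian variety, Lemma \ref{non-diag} shows $\sigma^*|_{\NS_{\C}(X)}$ is not diagonalizable, and its eigenvalues are roots of unity (they are algebraic integers of absolute value one, by the zero entropy condition on the lattice automorphism $\sigma^*|_{\NS(X)_f}$ and Kronecker's theorem). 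Replacing $\sigma$ by a suitable positive power---still wild by Lemma \ref{fin}(2), and still non-diagonalizable by Lemma \ref{non-diag}---I may assume $\sigma^*$ acts unipotently on the free part $\NS(X)_f := \NS(X)/(\mathrm{torsion})$; write $\sigma^* = \id + N$ with $N \ne 0$ nilpotent.

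Next I would manufacture a nef invariant class. Let $k \ge 1$ be maximal with $N^k \ne 0$ on $\NS(X)_f$. Because the ample classes span $\NS_{\R}(X)$, I can choose an integral ample class $H$ with $N^k H \ne 0$. Since $(\sigma^n)^* = (\id + N)^n = \sum_{j=0}^{k} \binom{n}{j} N^j$, the normalized limit
$$v := \lim_{n \to \infty} \frac{1}{n^k}(\sigma^n)^* H = \frac{1}{k!}\, N^k H$$
exists and is a nonzero $\Q$-class. It is $\sigma^*$-invariant since $N v = \tfrac{1}{k!} N^{k+1} H = 0$, and it is nef, being a limit of positive multiples of the ample classes $(\sigma^n)^* H$ (the nef cone is closed). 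Moreover $v$ is numerically nontrivial, as $N^k H \ne 0$ in $\NS(X)_f$.

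Finally I would invoke the hypotheses. Since $v$ is a nonzero nef $\Q$-Cartier class on the Calabi-Yau threefold $X$, Conjecture \ref{Og_conj} gives an effective $\Q$-divisor $D$ with $v \sim_{\Q} D$; here $D \ne 0$ because $v$ is numerically nontrivial. Choosing $m > 0$ so that $L := \OO_X(mv)$ is an integral line bundle with $mD$ integral and effective, we get $|L| \ne \emptyset$, while $\sigma^* L \cong L$ in $\Pic(X) = \NS(X)$ because $\sigma^* v = v$ in $\NS_{\Q}(X) = \Pic_{\Q}(X)$. Proposition \ref{PropA}(2) then forces $L = \OO_X$, i.e.\ $mv \sim 0$, contradicting $v \ne 0$ in $\NS_{\Q}(X)$. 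Hence $X$ admits no wild automorphism.

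The one delicate point---the \emph{main obstacle}---is the construction of an invariant class that is simultaneously nonzero and nef, since the naive fixed class $N^k c$ for an arbitrary $c$ need not be nef, and a wild automorphism of zero entropy gives only a non-semisimple (unipotent, non-identity) action on $\NS(X)_f$ with no obvious positive eigenvector. Applying the top power $N^k$ to an \emph{ample} $H$ and reading off the leading asymptotics of $(\sigma^n)^* H$ resolves this: it is exactly where nefness (from closedness of the nef cone) and invariance (from $N^{k+1} = 0$) are secured at once, after which Conjecture \ref{Og_conj} converts nefness into effectivity and Proposition \ref{PropA}(2) delivers the contradiction.
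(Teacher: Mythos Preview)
Your proof is correct and follows essentially the same route as the paper: the paper packages your construction of the nonzero nef $\sigma^*$-invariant class $v = N^k H/k!$ as a separate Lemma \ref{nef_Q} (with identical argument via the asymptotic $(\sigma^n)^*H/n^k$), and then applies Conjecture \ref{Og_conj} and Proposition \ref{PropA}(2) exactly as you do. The only cosmetic difference is that the paper cites Proposition \ref{PropC} (which rests on Corollary \ref{cy3}) for zero entropy and reduction to the strict Calabi-Yau case, whereas you invoke Corollary \ref{cy3} and Lemma \ref{non-diag} directly.
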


\begin{remark}\label{rem:Ki}
About the proof of \cite[Theorem 4.7]{Ki}, and the necessity of $\Q$-Cartier assumption on $L$ in Conjecture \ref{Og_conj}, we have:

\begin{enumerate}
\item[(1)]
In the proof of \cite[Theorem 4.7]{Ki}, the author proceeds by asserting the existence of a nef divisor $L$ with $c_2(X) . L = 0$ and claims that such $L$ is semi-ample by applying Conjecture \ref{Og_conj}.
This argument does not seem to work because such $L$ may not be a $\Q$-divisor (as required) to apply the conjecture.
\item[(2)]
Indeed, Conjecture \ref{Og_conj} does not hold for
$\R$-Cartier divisors. Precisely, in \cite[Theorem 1.4, Proposition 4.4]{OT}, the authors constructed an automorphism $f$ of positive entropy on a (smooth) Calabi-Yau $3$-fold $X$ such that $f$ is primitive, i.e., there is no $f$-equivariant rational map to any variety $Y$ with
$0 < \dim Y < \dim X$. By Perron-Frobenius theorem, there is a nonzero nef $\R$-Cartier divisor $L$ such that $f^*L \equiv d_1(f) L$. Since $d_1(f) > 1$, we have
$c_2(X) . L = 0$. But this $L$ is not semi-ample, or else it would have induced an $f$-equivariant non-trivial morphism to a curve or a surface, contradicting the primitivity of $f$.

\item[(3)]
For the sake of completeness, in the next paragraphs, we supply a proof of \cite[Theorem 4.7]{Ki}.
\end{enumerate}
\end{remark}

\begin{lemma}\label{nef_Q}
Let $X$ be a normal projective variety and
$\sigma$ an automorphism.
Suppose $\sigma^*|_{\NS(X)_{\C}}$ is of infinite order and
$\sigma$ is of zero entropy.
Then there is a nef (integral) Cartier divisor $L \not\equiv 0$ such that
$\sigma^*L \equiv L$ (numerical equivalence) possibly after replacing $\sigma$ by a positive power.
In particular, we may assume $\sigma^*L \sim L$
if the irregularity $q(X) = 0$.
\end{lemma}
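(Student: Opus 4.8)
The plan is to produce the desired class not by a Perron--Frobenius/fixed-point argument (which only yields a \emph{real} nef eigenvector), but by isolating the leading term of the nilpotent part of $\sigma^*$; this gives an \emph{integral}, nef, nonzero, genuinely fixed class all at once. First I would record that $\sigma^*$ acts on the free part of $\NS(X)$ as an element of $\GL(\NS(X))\cong\GL_{\rho}(\Z)$, so $\det(\sigma^*)=\pm1$ and the eigenvalues $a_i$ are algebraic integers with $\prod_i|a_i|=1$. Zero entropy says the spectral radius of $\sigma^*$ is $1$, hence $|a_i|\le 1$ for all $i$; together with $\prod_i|a_i|=1$ this forces $|a_i|=1$ for every $i$. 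Since the characteristic polynomial lies in $\Z[t]$, each Galois conjugate of an $a_i$ is again an eigenvalue, hence also on the unit circle, so Kronecker's theorem shows every $a_i$ is a root of unity. Replacing $\sigma$ by a suitable positive power, I may therefore assume that $u:=\sigma^*-\id$ is nilpotent on $\NS_{\R}(X)$, say $u^{k}\neq 0$ and $u^{k+1}=0$ for some $k\ge 0$.

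Next I would fix an ample \emph{integral} class $A$. Because the ample classes form a nonempty open cone spanning $\NS_{\R}(X)$ and $u^{k}\neq 0$ as a linear map, $A$ can be chosen with $u^{k}(A)\neq 0$. Expanding $(\sigma^n)^*=(\id+u)^{n}=\sum_{j=0}^{k}\binom{n}{j}u^{j}$ and dividing by $\binom{n}{k}$,
$$\frac{1}{\binom{n}{k}}\,(\sigma^n)^*A \;=\; \sum_{j=0}^{k}\frac{\binom{n}{j}}{\binom{n}{k}}\,u^{j}(A)\;\longrightarrow\; u^{k}(A)\qquad(n\to\infty),$$
since $\binom{n}{j}/\binom{n}{k}\to 0$ for $j<k$. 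Each $(\sigma^n)^*A$ is ample, hence nef, so the left-hand side lies in $\Nef(X)$ for every $n$; as the nef cone is closed, the limit $L:=u^{k}(A)$ is nef. Moreover $L$ is a nonzero integral class (as $u$ and $A$ are integral), and
$\sigma^*L=(\id+u)u^{k}(A)=u^{k}(A)+u^{k+1}(A)=L$, so $\sigma^*L\equiv L$. This is the class required by the first assertion.

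For the ``in particular'' clause, when $q(X)=0$ we have $\Pic^{0}(X)=0$, so the group of numerically trivial classes $\Pic^{\tau}(X)$ is finite, i.e. torsion. Hence $\tau:=\sigma^*L-L\in\Pic^{\tau}(X)$ has some finite order $N$, and replacing $L$ by $NL$ (still nef and nonzero) gives $\sigma^*(NL)-NL=N\tau=0$, that is $\sigma^*L\sim L$.

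The main obstacle is exactly the one this construction sidesteps: manufacturing a class that is simultaneously integral, nonzero, nef, and fixed. A Brouwer/Perron--Frobenius argument on $\Nef(X)$ easily produces a fixed nef \emph{real} ray, but its rationality is unclear when the eigenvalue-$1$ eigenspace is higher dimensional or touches the boundary of the nef cone, and one cannot in general perturb to a nearby \emph{nef} rational class inside that eigenspace. Extracting the leading nilpotent term $u^{k}(A)$ resolves all four requirements at once: it is integral and $\sigma^*$-fixed by the algebra of $u$, nonzero by the choice of $A$, and nef because it is a limit of rescaled ample pullbacks.
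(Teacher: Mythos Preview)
Your proof is correct and follows essentially the same route as the paper: pass to a power so that $\sigma^*$ is unipotent, apply the top nonzero power of the nilpotent part to an ample class, and realize the result as a limit of rescaled ample pullbacks to get nefness, with $\sigma^*$-invariance forced by $u^{k+1}=0$. The only differences are cosmetic (you normalize by $\binom{n}{k}$ rather than $n^{k}$, which makes $L=u^{k}(A)$ integral on the nose instead of after clearing a factorial) and that you spell out the $q(X)=0$ step via torsion in $\Pic^{\tau}(X)$, which the paper leaves implicit.
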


\begin{proof}
Replacing $\sigma$ by a positive power, we may assume that $\sigma^*|_{\NS(X)_{\C}}$ is unipotent
(but not trivial)
so it is equal to $I + N$ with $N$ nilpotent.
Let $m$ ($\ge 2$) be the maximal order of Jordan blocks of $\sigma^*|_{\NS(X)_{\C}}$.
Then $N^m = 0$, but $N^{m-1} \ne 0$. Thus
$$g := \lim_{s \to \infty}
(\sigma^*)^s/s^{m-1} = N^{m-1}/(m-1)!$$
is a nonzero linear transformation on $\NS(X)_{\Q}$.
Since ample Cartier divisor classes generate $\NS(X)_{\Q}$ and since $g = N^{m-1}/(m-1)! \not= 0$ on $\NS(X)_{\Q}$, there is an ample Cartier divisor $H$ on $X$ so that $L := g(H)$ is nonzero $\Q$-Cartier divisor. Since $L_s := (\sigma^*)^s(H)/s^{m-1}$ is ample, $L = \lim_{s \to \infty} L_s$ is nef. Moreover, using $\sigma^* = I +N$ and $N^m = 0$, we compute
$$\sigma^*L = \lim_{s \to \infty}
(I+N)^{s+1}(H)/s^{m-1} = (N^{m-1}/(m-1)!)(H) = g(H) = L .$$
The lemma follows by replacing $L$ by a positive multiple.
\end{proof}

\begin{setup}
{\bf Proof of Theorem \ref{Thm_Ki}.}
Let $X$ be a (smooth) Calabi-Yau threefold with a wild automorphism $\sigma$.
By Proposition \ref{PropC}, $\sigma$ is of zero entropy.
We are going to reach a contradiction.
By Proposition \ref{PropA}, we can lift $\sigma$ to $\widetilde{\sigma}$ on the universal cover $\widetilde{X}$ of $X$,
such that $\widetilde{X}$ is a Calabi-Yau manifold in the strict sense (while $\widetilde{\sigma}$ is, of course, still of zero entropy). Replacing $X$ by $\widetilde{X}$ we may assume that $X$ is a Calabu-Yau manifold in the strict sense.

By Lemma \ref{nef_Q}, we have an
integral nef Cartier divisor $L \not\equiv 0$ such that $\sigma^*L \sim L$.
Since we have assumed that Conjecture \ref{Og_conj} holds for such $X$,
we have $|mL| \not= \emptyset$ for some positive integer $m$, contradicting Proposition \ref{PropA} (2).
This proves Theorem \ref{Thm_Ki}.
\end{setup}

\end{document}